\newcommand\cyr{%
\renewcommand\rmdefault{wncyr}%
\renewcommand\sfdefault{wncyss}%
\renewcommand\encodingdefault{OT2}%
\normalfont
\selectfont}
\DeclareTextFontCommand{\textcyr}{\cyr}
\theoremstyle{plain}
\newtheorem{theorem}{Theorem}[section]
\newtheorem*{theorem*}{Theorem}
\newtheorem*{"theorem"}{``Theorem''}
\newtheorem{corollary}[theorem]{Corollary}
\newtheorem{proposition}[theorem]{Proposition}
\newtheorem{lemma}[theorem]{Lemma}
\theoremstyle{definition}
\theoremstyle{remark}
\newtheorem{remark}[theorem]{Remark}
\numberwithin{equation}{section}
\newcommand{\N}{\mathbb N} 
\newcommand{\T}{\mathbb T} 
\newcommand{\Z}{\mathbb Z} 
\newcommand{\R}{\mathbb R} 
\newcommand{\dist}{{\rm dist}}
\newcommand{\wto}{\rightharpoonup}
\newcommand{\ol}{\overline}
\newcommand{\E}{{\mathcal E}}
\DeclareMathOperator{\A}{A}
\renewcommand{\L}{{\mathcal L}}
\newcommand{\F}{{\mathcal F}}
\newcommand{\LRa} {\Leftrightarrow}
\newcommand{\Ra} {\Rightarrow}
\renewcommand{\H}{{\mathcal H}}
\let\ul = \underline
\let\wt = \widetilde
\newcommand{\sign}{\mathrm{sign}}
\newcommand{\eps}{\varepsilon}
\newcommand{\dx}{\,\mathrm{d}x}
\renewcommand{\d}{\,\mathrm{d}}
 \newcommand{\dy}{\,\mathrm{d}y}
 \newcommand{\dt}{\,\mathrm{d}t}
 \newcommand{\dr}{\,\mathrm{d}r}
\newcommand{\ubar}{\bar u}
\newcommand{\w}{\overline w}
\newcommand{\Per}{\mathrm{Per}}
 \newenvironment{pde}{\left\{\begin{array}{rll} } {\end{array}\right.}
\begin{document}

\title[The Effect of Forest Dislocations]{The Effect of Forest Dislocations on the Evolution of a Phase-Field Model for Plastic Slip}

\author{Patrick W.~Dondl}
\address{Patrick W.~Dondl\\Albert-Ludwigs-Universit\"at Freiburg\\Mathematisches Institut\\Eckerstraße 1\\79104 Freiburg, Germany}
\email{patrick.dondl@mathematik.uni-freiburg.de}

\author{Matthias W. Kurzke}
\address{Matthias W. Kurzke\\Mathematical Sciences\\University Park\\Nottingham
NG7\,2RD, UK}
\email{matthias.kurzke@nottingham.ac.uk}

\author{Stephan Wojtowytsch}
\address{Stephan Wojtowytsch\\Department of Mathematical Sciences\\Durham University\\Durham DH1\,1PT, United Kingdom}
\email{s.j.wojtowytsch@durham.ac.uk}

\date{\today}

\subjclass[2010]{74E15, 35S11, 35S10, 35D40, 35D30}
\keywords{phase field, fractional evolution equation, non-local Allen-Cahn equation, perforated domain, pinning of interfaces, crystal dislocation, homogenisation, Peierls-Nabarro model}

\begin{abstract}
We consider the gradient flow evolution of a phase-field model for crystal dislocations in a single slip system in the presence of forest dislocations. The model consists of a Peierls-Nabarro type energy penalizing non-integer slip and elastic stress. Forest dislocations are introduced as a perforation of the domain by small disks where slip is prohibited. The $\Gamma$-limit of this energy was deduced by Garroni and M{\"u}ller (2005 and 2006). Our main result shows that the gradient flows of these $\Gamma$-convergent energy functionals do not approach the gradient flow of the limiting energy. Indeed, the gradient flow dynamics remains a physically reasonable model in the case of non-monotone loading. Our proofs rely on the construction of explicit sub- and super-solutions to a fractional Allen-Cahn equation on a flat torus or in the plane, with Dirichlet data on a union of small discs. The presence of these obstacles leads to an additional friction in the viscous evolution which appears as a stored energy in the $\Gamma$-limit, but it does not act as a driving force. Extensions to related models with soft pinning and non-viscous evolutions are also discussed. In terms of physics, our results explain how in this phase field model the presence of forest dislocations still allows for plastic as opposed to only elastic deformation.
\end{abstract}

\maketitle

\setcounter{tocdepth}{1}

\tableofcontents

\section{Introduction}
It is well-known that $\Gamma$-convergence of functionals is a $C^0$-type convergence that does not imply convergence of the related dynamics. For example, the `wiggly' potentials
\[
f_\eps:[-1,1]\to \R, \qquad f_\eps(x) = x^2 + 2\eps\,\sin(x^2/\eps)
\]
converge uniformly, hence also in the sense of $\Gamma$-convergence, to the limit $f(x) = x^2$ as $\eps\to 0$ while solutions to the gradient flows of $f_\eps$ never move more than $\pm \pi \eps$ from their initial datum into a local minimum and thus do not resemble the gradient flow of the $\Gamma$-limit at all. 

On the other hand, there are well known conditions under which the gradient flows of $\Gamma$-convergent functionals on Hilbert spaces \cite{sandier2004gamma} and metric spaces \cite{serfaty2011gamma} approach the gradient flow of a the limiting energy in a suitable sense. In applications, it is not always obvious whether functionals belong to the `wiggly' or the convergent `Sandier-Serfaty'-class.

Here, we consider the effect of forest dislocations on the propagation of slip in Peierls-Nabarro-type models following \cite{MR1935021}. In~\cite{MR2178227, MR2231783} it was shown that the corresponding non-local Modica-Mortola type energy functional augmented with the condition that the phase field $u_\eps$ vanishes at certain small obstacles $\Gamma$-converges to a functional given by the sum of a perimeter and a bulk energy. Essentially, the articles above show (in higher generality) that the  energies 
\begin{equation}\label{eq: energy}
\E_\eps(u_\eps) =  \frac1{|\log\eps|}\left(\,[u_\eps]^2_{1/2} + \int_{\T^2} \frac1\eps W(u_\eps)\dx\right)
\end{equation}
converge to a functional
\begin{equation}\label{eq: limit energy}
\E(u) = \Per(\{u=1\}) + \Lambda\,\alpha\,\H^2(\{u=1\}), \qquad u\in BV(\T^2, \{0,1\})
\end{equation}
in the sense of $\Gamma$-convergence with respect to the strong $L^2$-topology when restricted to the spaces
\begin{equation}\label{eq ansatz space}
\E_\eps:X_\eps \to \R, \qquad X_\eps:= \{u_\eps \in H^{1/2}(\T^2)\:|\:u_\eps\equiv 0\text{ on }B_\eps(x_{i,\eps})\text{ for } 1\leq i\leq N_\eps\}.
\end{equation}
The obstacles $x_{i,\eps}$ have to satisfy certain distribution assumptions and $\frac\eps{|\log\eps|} \sum_{i=1}^{N_\eps}\delta_{x_{i,\eps}} \wto \Lambda\,\H^2$,
where $\H^k$ denotes the $k$-dimensional Hausdorff measure and $W$ is a non-negative smooth multi-well potential vanishing quadratically at the integers. The constant $\alpha$ is determined through the solution of a cell-problem. Dislocations in this model are given by $\Z+1/2$-level sets of the slip $u$ (see Figure \ref{figure one}).

The focus of this article is the evolution that arises as the limit of the gradient flows of the energies $\E_\eps$. In technical terms, we are interested in the behaviour of solutions to the evolution equation
\begin{equation}\label{eq: evolution}
c_\eps\,(\eps \,\partial_tu_{\eps}) = \frac1{|\log\eps|}\left(\A u_{\eps} - \frac1\eps\,W'(u_{\eps})\right), \qquad u_\eps(0, \cdot) = u_\eps^0, \qquad u_\eps(t,\cdot) \in X_\eps\quad\forall\ t>0
\end{equation}
as $\eps\to 0$, where $\A \coloneqq - (-\Delta)^{1/2}$ is the fractional Laplacian or order $s=1/2$. The case $c_\eps\equiv 1$ corresponds to the time-normalised gradient flow dynamics of \eqref{eq: energy} under the pinning constraint \eqref{eq ansatz space}. Depending on the exact problem, we identify the scaling regime $c_\eps$ in which the evolution approaches non-trivial limiting dynamics and give results on the limits of solutions of \eqref{eq: evolution} for suitable initial conditions. 

We show that this problem belongs to the `wiggly' world, i.e., that the gradient flows of  $\E_\eps$ do not approach the dynamics of the limiting problem. The idea behind this is that the non-locality in the energy is too weak to summon a driving force on an otherwise unloaded flat dislocation from the pinning constraint on a relevant time scale. On the other hand, if an external force (or a curvature term) acts to expand the $\{u_\eps\approx 1\}$-phase, we do see a resistance from the energy barrier. Thus the perforation of the domain induces a friction which only resists other forces but does not initiate movement.

Three different terms appear in the dynamics on different time-scales: The curvature driven evolution stemming from the diffuse line-energy which acts on the gradient flow scale, a non-local interaction between interfaces (kink/kink repulsion and kink/anti-kink attraction) stemming from the next order $\Gamma$-limit which is $|\log\eps|^{-1}$-small with respect to the curvature flow, and the diffuse bulk term, which acts as a driving force, but only on a time-scale which is roughly $\eps^{{1/2}}$-slow with respect to the other terms, although it can act against other forces on the fast scale.

In this article, we mostly focus on the situation of infinite parallel straight interfaces to be able to neglect the curvature-driven evolution. We  construct explicit sub- and super-solutions to estimate the speed of motion of an interface. At some non-straight interfaces, we can obtain bounds by using sub-solutions as barriers to show non-expansive behaviour (i.e., $u$ non-decreasing) and energy methods to show non-shrinking of an initial condition (i.e., $u$ non-increasing). 

Physically, our results provide a justification why the phase-field model is valid beyond the applicability of the $\Gamma$-limit, where the bulk term stemming from the forest dislocations induces a dislocation evolution to return to the undeformed state $u=0$ at macroscopic velocities.

The article is structured as follows. In Section \ref{section heuristics}, we explain the mathematical setting and the heuristic reasoning behind our results as well as a brief statement of our main theorems. In Section \ref{section 1d} we construct sub-solutions and apply them to a one-dimensional analogue of our problem in order to obtain results in this simpler setting. In Section \ref{section 2d}, we state the main results in more precise and general terms and show how the one-dimensional proofs can be adapted to yield the full results. Section \ref{section stuff} is devoted to the discussion of different related models, in particular non-viscous evolution. We conclude the article with a brief summary and some open problems. In an appendix, we briefly discuss parabolic equations with fractional differential operators on bounded domains.

\section{Background and Heuristics} \label{section heuristics}

\subsection{The Energy Limit}

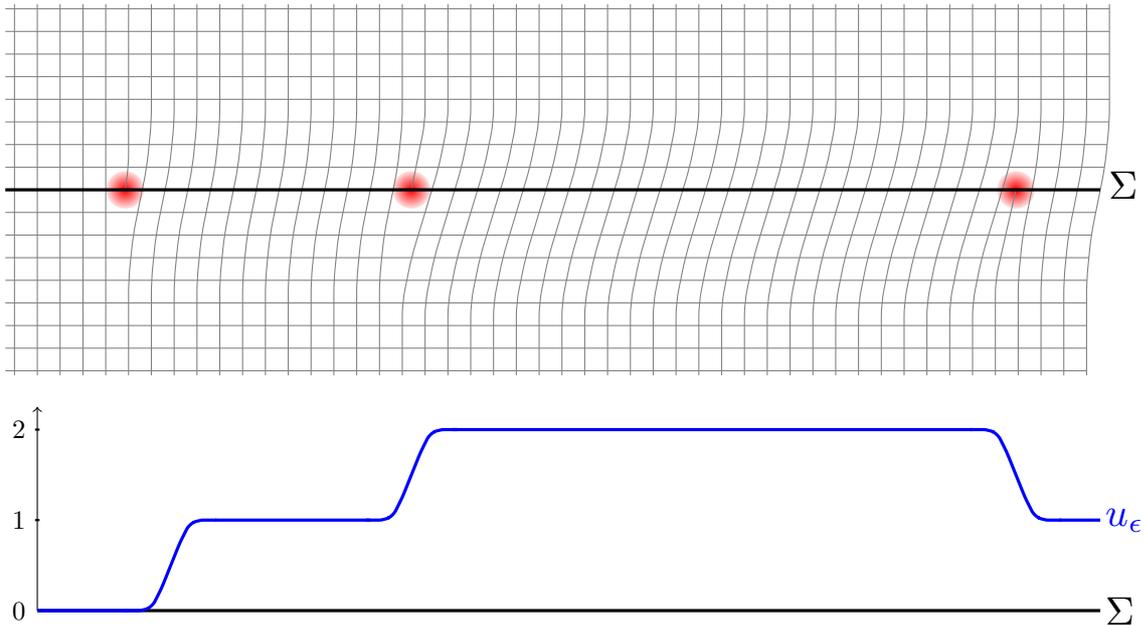
\begin{figure}
\begin{center}
\begin{tikzpicture}[scale = 0.6]

\definecolor{test}{gray}{0.5}
\definecolor{darkgreen}{RGB}{0,102,51}

\fill[ inner color=red, outer color=white] (1.925,4) circle (0.45);
\draw[white, very thick]  (1.925,4) circle (0.45);

\fill[ inner color=red, outer color=white] (8.2, 4) circle (0.45);
\draw[white, very thick]  (8.2, 4) circle (0.45);

\fill[ inner color=red, outer color=white] (21.45, 4) circle (0.45);
\draw[white, very thick]  (21.45, 4) circle (0.45);

\foreach\n in{0, ..., 16}
\draw[rounded corners = 15pt, test] (-0.7, \n/2) -- (23.7, \n/2);

\fill[white, rounded corners = 5pt] (46/2, -0.1) -- ++ (0,2.1) -- ++ (0.1,1) -- ++ (0.3,1.5) -- ++ (0.1,1) -- ++ (0,2.8)-- ++ (1,0)-- ++(0,-8.7);

\foreach\n in{-1, ..., 3}
\draw[rounded corners = 15pt, test] (\n/2, -0.1) -- (\n/2, 8.1);

\draw[rounded corners = 5pt, test] (1.925, 4) -- ++ (0.075,0.5) -- ++ (0,3.6);

\foreach\n in{4, ..., 15}
\draw[rounded corners = 5pt, test] (\n/2, -0.1) -- ++ (0,2.1) -- ++ (0.1,1) -- ++ (0.3,1.5) -- ++ (0.1,1) -- ++ (0,2.6);

\draw[rounded corners = 5pt, test] (8.2,4) -- ++ (0.3,1.5) -- ++ (0,2.6);

\foreach\n in {16, ..., 41}
\draw[rounded corners = 5pt, test] (\n/2, -0.1) -- ++ (0,1.6) -- ++ (0.2,1) -- ++ (0.6,2) -- ++ (0.2,1) -- ++ (0,2.6);

\draw[rounded corners = 5pt, test] (21, -0.1) -- ++ (0,2.1) -- ++ (0.45,2);

\foreach\n in {43, ..., 46 }
\draw[rounded corners = 5pt, test] (\n/2, -0.1) -- ++ (0,2.1) -- ++ (0.1,1) -- ++ (0.3,1.5) -- ++ (0.1,1) -- ++ (0,2.6);

\draw[very thick](-0.7, 4) -- (23.3, 4);
\node at (23.8, 4.1) [scale=1.5] {$\Sigma$};

\begin{scope}[yshift=-5.3cm]

\draw[ ->](0,0) -- ++(0,4.5);
\draw[ very thick](0,0) -- ++ (23.3, 0);

\node at (-0.4, 0) {0};
\node at (-0.4, 2) {1}; \draw [thick](-0.05, 2) -- ++ (0.1, 0);
\node at (-0.4, 4) {2};\draw [thick](-0.05, 4) -- ++ (0.1, 0);

\node at (23.1, 0)[,right, scale = 1.5] {$\Sigma$};

\node at (23.1, 2)[right,scale = 1.5, blue]{$u_\epsilon$};

\begin{scope}[xshift=1.925cm]
\draw[very thick, blue, rounded corners = 2pt] (0,0) -- ++ (0.4, 0) -- ++ (0.2,0.1) -- ++(0.2, 0.4)--++  (0.4, 1) -- ++(0.2,0.4) -- ++ (0.2,0.1) -- ++(0.4,0);
\end{scope}
\begin{scope}[xshift=7.2cm, yshift=2cm]
\draw[very thick, blue, rounded corners = 2pt] (0,0) -- ++ (0.4, 0) -- ++ (0.2,0.1) -- ++(0.2, 0.4)--++  (0.4, 1) -- ++(0.2,0.4) -- ++ (0.2,0.1) -- ++(0.4,0);
\end{scope}
\begin{scope}[xshift=22.45cm, yshift=2cm, xscale=-1]
\draw[very thick, blue, rounded corners = 2pt] (0,0) -- ++ (0.4, 0) -- ++ (0.2,0.1) -- ++(0.2, 0.4)--++  (0.4, 1) -- ++(0.2,0.4) -- ++ (0.2,0.1) -- ++(0.4,0);
\end{scope}

\draw[very thick, blue](0,0)--(2,0);
\draw[very thick, blue] (3.9, 2) -- (7.3,2);
\draw[very thick, blue] (9.1, 4) -- (20.5,4);
\draw[very thick, blue] (22.4,2) -- (23.3,2);

\end{scope}

\end{tikzpicture}
\end{center}
\caption{\label{figure one}The phase-field $u_\eps$ counts the number of half-planes wedged into a crystal grid. Dislocations are the level sets of $\Z+1/2$, i.e, the interfaces between the phases. The dislocations we consider all lie in the same plane $\Sigma$ of the crystal grid and their Burgers vectors $\vec b$ are integer multiples of a single vector $\vec{b_0}$. }
\end{figure}

The energies $\E_\eps$ are obtained as a model for crystal dislocations in \cite{MR1935021} and motivated in their current form in \cite{MR2178227,MR2231783}. As the characteristic length scale $\eps$ of crystal grids is typically very small compared to the behaviour of a crystal on the length scale we are interested in, it is desirable to have a simpler continuum limit $\eps\to 0$ available. This has been formalised by Garroni and M\"uller as follows.

\begin{theorem}\cite{MR2231783}\label{theorem garroni mueller}
Let $x_{i,\eps}\in \T^2$ be points such that $1\leq i\leq N_\eps$ with $\frac{\eps}{|\log\eps|}N_\eps\to \Lambda$ satisfying the following assumptions:

\begin{enumerate}
\item {\rm (equi-distributed)} For $r_\eps\sim N_\eps^{-1/2}$ there exist constants $c,C>0$ such that 
\[
c\,r_\eps^2\,N_\eps \leq N_\eps(Q_\eps)\leq C\,r_\eps^2\,N_\eps
\]
where $N_\eps(Q_\eps)$ is the number of obstacles in $Q_\eps$ and $Q_\eps$ is a square of side length $r_\eps$.

\item {\rm (well-separated)} There exists $\beta<1$ independent of $\eps>0$ such that $d(x_{i,\eps},x_{j,\eps})>6\,\eps^\beta$ for all $1\leq i\neq j\leq N_\eps$.

\item {\rm (finite capacity density)} The obstacles approach a multiple of the Lebesgue measure through $\frac{\eps}{|\log\eps|}\sum_{i=1}^{N_\eps}\delta_{x_i}\wto \Lambda\,\L^2$ for $\Lambda\in (0,\infty)$.

\end{enumerate}

Take the space
\[
X_\eps:= \{u_\eps \in H^{1/2}(\T^2)\:|\:u_\eps\equiv 0\text{ on }B_\eps(x_{i,\eps})\text{ for } 1\leq i\leq N_\eps\}
\]
and the energy functional
\[
\E_\eps:X_\eps\to\R, \qquad \E_\eps(u_\eps) =  \frac1{|\log\eps|}\left(\,[u_\eps]^2_{1/2} + \int_{\T^2} \frac1\eps W(u_\eps)\dx\right)
\]
where $W$ is a periodic multi well potential satisfying $W\geq c\,\dist^2(\cdot,\Z)$ for some $c>0$. Then
\[
\left[\Gamma(L^2)-\lim_{\eps\to 0} \E_\eps\right](u) = \int_{\T^2} \alpha(u)\dx + 4\int_{J_u}[u]\d\H^{1}
\]
where $u\in BV(\T^2,\Z)$, $[u] = u^+-u^-$ denotes the jump of $u$ on the jump set $J_u$ and $\alpha(z)$ is determined as the solution of the cell problem
\begin{equation}\label{eq cell problem}
\alpha(z) = \inf\left\{ \frac12\,[w]_{1/2,\R^2}^2 + \int_{\R^2}W(w)\dx \:\bigg|\: w-z\in H^{1/2}(\R^2), \:w\equiv 0 \text{ on }B_1(0)\right\}.
\end{equation}
\end{theorem}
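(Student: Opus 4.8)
The plan is to carry out the two halves of a $\Gamma$-convergence argument, keeping the line-tension effect carried by the multi-well potential separate from the bulk effect carried by the perforations: for the first I would follow the non-local Modica--Mortola analysis of Alberti--Bouchitt\'e--Seppecher adapted to the torus, and for the second the capacitary cell-problem analysis encoded in \eqref{eq cell problem}. Compactness comes first. Given $u_\eps \to u$ in $L^2(\T^2)$ with $\sup_\eps\E_\eps(u_\eps) < \infty$, the term $\frac1\eps\int_{\T^2}W(u_\eps)$ confines $u_\eps$ to within $o(1)$ of $\Z$ off a set of vanishing measure, while the $|\log\eps|^{-1}$-renormalised Gagliardo seminorm --- estimated through the Caffarelli--Silvestre harmonic extension and a coarea/slicing estimate in the upper half-space --- controls the number and the weighted length of the transitions between consecutive wells, so that $u \in BV(\T^2,\Z)$.

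\emph{Lower bound.} I would use the localisation/blow-up method: regard $\mu_\eps := \E_\eps(u_\eps;\cdot)$ as a Radon measure on $\T^2$, extract a weak-$*$ limit $\mu$ along a subsequence, and show that its $\H^1$-density along $J_u$ is at least $4[u]$ while its $\L^2$-density is at least $\alpha(u)$; since these parts are mutually singular, $\mu(\T^2) \geq 4\int_{J_u}[u]\d\H^1 + \int_{\T^2}\alpha(u)\dx = \E(u)$. For the interface density, blow up at an $\H^1$-generic point $x_0 \in J_u$: the rescaled configuration is a one-dimensional transition between two consecutive integers across a line, and the logarithmic divergence of the $H^{1/2}$-seminorm of such a profile in $\R^2$ --- truncated below at the obstacle scale $\eps$ and above at the blow-up scale --- produces precisely the factor $4$ per unit of jump, while the $O(|\log\eps|)$ obstacles sitting in the $\eps$-tube around $J_u$ can be removed at a cost that is $o(|\log\eps|)$ and hence do not erode this bound. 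For the bulk density, blow up at a Lebesgue point $x_0$ of $u$ with $u \equiv z \in \Z$ there, at an intermediate scale $r_\eps$ with $\eps \ll r_\eps \ll$ the minimal inter-obstacle distance, so that only one obstacle of rescaled radius $\eps/r_\eps \to 0$ survives; the homogeneity relations $[u_\eps]^2_{1/2} = \eps\,[v]^2_{1/2}$ and $\frac1\eps\int W(u_\eps) = \eps\int W(v)$ for $v(y) := u_\eps(x_0 + \eps y)$, together with $N_\eps \sim \Lambda|\log\eps|/\eps$, identify the per-obstacle cost as asymptotically $\eps/|\log\eps|$ times the value of \eqref{eq cell problem} for that $z$, and the weak convergence $\frac\eps{|\log\eps|}\sum_i\delta_{x_i}\wto\Lambda\,\L^2$ turns the sum over obstacles into the bulk term $\int_{\T^2}\alpha(u)\dx$. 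Assumptions (1)--(2) of the theorem are precisely what legitimises this separation of scales and the associated diagonal extraction.

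\emph{Upper bound.} By density it suffices to build a recovery sequence for $u$ polyhedral with locally constant value on each face. Away from the perforations I glue in the logarithmically renormalised optimal transition layer of the non-local Modica--Mortola energy across $J_u$, which realises $4\int_{J_u}[u]\d\H^1$; around each $x_{i,\eps}$ I insert the $\eps$-rescaled near-minimiser of \eqref{eq cell problem} for $z = u(x_{i,\eps})$, cut off so as to match the background profile on $\partial B_{\rho_\eps}(x_{i,\eps})$ for a suitable intermediate radius $\rho_\eps$. Membership in $X_\eps$ is then automatic, and the substance of the proof is to bound the cross terms of the non-local seminorm --- between distinct obstacles, and between the obstacles and the interface layer --- by $o(|\log\eps|)$; here the well-separatedness $d(x_i,x_j) > 6\eps^\beta$ and the algebraic decay of the cell minimiser are decisive. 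Summing the per-obstacle costs against the empirical measure and adding the interface term gives $\limsup_\eps\E_\eps(u_\eps) \leq \E(u)$.

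\emph{Main obstacle.} The delicate point is the decoupling of the two mechanisms in the lower bound: because the $\eps$-tube around $J_u$ contains of order $|\log\eps|$ obstacles, one has to prove that the $H^{1/2}$-budget attributed to the interface cannot simultaneously be credited against the cell-problem cost of those obstacles, and conversely. This is what the mutual singularity of the two limiting measures, the blow-ups at $\mu$-generic points of each type, and a careful joint choice of the cutoff scales $\eps \ll r_\eps, \rho_\eps \ll 1$ are designed to achieve; getting the non-local cross terms and these scales under simultaneous control is the heart of the matter.
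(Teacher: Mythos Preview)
This theorem is not proved in the present paper at all: it is quoted verbatim from Garroni and M\"uller \cite{MR2231783} as background (note the citation in the theorem header), and the paper only comments afterwards that the original reference treats the result in greater generality and also establishes pre-compactness. There is therefore no ``paper's own proof'' to compare your proposal against.

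That said, your sketch is a reasonable high-level outline of the Garroni--M\"uller strategy: compactness from the potential and the renormalised seminorm, a liminf inequality via localisation and blow-up separating the $\H^1$-part on $J_u$ from the $\L^2$-part at Lebesgue points of $u$, and a recovery sequence built by gluing optimal transition layers to rescaled cell-problem minimisers near each obstacle. The identification of the decoupling of the two mechanisms as the crux is accurate. If you want feedback on the argument itself rather than on its relation to this paper, you should compare directly with \cite{MR2178227,MR2231783}; within the present paper there is nothing further to check.
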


In \cite{MR2178227, MR2231783} the precise statement is given also for anisotropic kernels, different scalings of the number of obstacles, different obstacle sizes proportional to $\eps$, and finite strength pinning. Furthermore, pre-compactness of finite energy sequences is established.

Let us briefly comment on this result. The $\Gamma$-limit is essentially the sum of two terms, the perimeter functional which occurs as the limit of the unconstrained non-local Modica-Mortola functional (see \cite{MR1657316,MR2948285} for double-well potentials and \cite{MR2231783,KurzkeBV,KurzkeGF} for periodic potentials), and the bulk term which stems from the pinning constraint (see \cite{MR0601059, MR1907765,MR1493040} for the local case). In the critical scaling $N_\eps \sim \frac{|\log\eps|}\eps$ both terms appear on the same order. 

\begin{remark}
In one dimension, the critical scaling is $N_\eps\sim |\log\eps|$. The difference arises due to the different scaling of the $H^{1/2}$-semi-norm in different dimensions.
\end{remark}

\subsection{Viscous Evolution}\label{section heuristic}

In this article, we  compare the gradient-flow dynamics associated to the functionals $\E_\eps$ with those of the continuum limit. If we assume that both halves of the crystal relax on a timescale much faster than the motion of dislocations, we can describe the dynamics by a quasi-static evolution, i.e.\ only the jump set along the slip plane needs to be evolved according to the gradient flow of the energy $\E_\eps$ and the distortion field in upper and lower half space approaches the associated energy minimum instantaneously.

According to \cite{imbert2009phase}, solutions to the associated evolution equation of $\E_\eps$ {\em without} the pinning constraint
\[
\eps\,u_t = \frac1{|\log\eps|}\left(\A u-\frac1\eps\,W'(u)\right)
\]
converge to level set mean curvature flow. 

\begin{remark}
The $\eps$ in front of the time derivative is the correct time scaling for a phase field gradient flow since the interface moves with speed $O(1)$ if the time derivative is $O(1/\eps)$.
\end{remark}

In one dimension, the perimeter functional has no interesting dynamics, so the behaviour of the evolution equation (without obstacles) should be governed by the next order $\Gamma$-limit. At a simple step function $\chi_{[r_1,r_2]}$ on the real line we can modify arguments from \cite{KurzkeBV, KurzkeGF} for closely related energies to see that
\begin{equation}\label{eq first order gamma}
{\Gamma(L^2)-\lim_{\eps\to 0} |\log\eps|\cdot \left(\E_\eps - 2\right) = c_0\,\log|r_2-r_1|}+c_1
\end{equation}
where $\E_\eps$ is given by the same formula as above, but in dimension one and on a space without pinning constraint. Here $c_0>0$ is a constant depending on the potential $W$.
 In particular, the next order term in the $\Gamma$-expansion vanishes only logarithmically in $\eps$ rather than exponentially fast as in the classical local functional. Using non-variational techniques, Gonzalez and Monneau  showed  in \cite{gonzalez2010slow}that in one dimension (or at straight parallel interfaces), we still expect to see attraction of interfaces on the slower timescale
\[
\frac1{|\log\eps|}\left(\eps\,u_t\right) = \frac1{|\log\eps|}\left(\A u- \frac1\eps\,W'(u) \right).
\]
This surprisingly fast motion contrasts with the (local) Allen-Cahn equation in one dimension
\[
\eps\,u_t = \eps\,\Delta u - \frac1\eps\,W'(u),
\]
which becomes exponentially slow in $\eps$ \cite{carr1989metastable}. The stronger attraction here stems from the non-locality of the half Laplacian as compared to the full Laplace operator, stemming from the next order $\Gamma$-limit \eqref{eq first order gamma} at a simple step function. The heavy tails of the singular kernel force slower decay of optimal interfaces for the fractional Allen-Cahn equation, which translates into stronger attraction (see Section \ref{section interface}).

Now consider the effect of pinning, just in one dimension. Heuristically, we simply take a function $u$ with one or two interfaces and pinned obstacles on points $d_\eps\Z$ such that the interfaces are $O(d_\eps)$ away from the nearest obstacle, $0$ outside and $1$ in between the obstacles. The obstacle at $md_\eps$ contributes an amount roughly proportional to
\[
\frac1{|\log\eps|}\int_{m\,d_\eps-\eps}^{m\,d_\eps +\eps}\frac{1}{|x|^2}\dx \approx \frac{\eps}{|\log\eps|\,m^2\,d_\eps^2}
\]
to the attractive force in the $1/2$-Laplacian on an interface at $x=0$, using the representation
\begin{align}\nonumber
\Delta^{1/2}u(x) &= P.V.\int_{\R^n} \frac{u(y) - u(x)}{|x-y|^{n+1}}\dy\\
	&= \int_{B_\rho(x)} \frac{u(y) - u(x) - \langle \nabla u(x), y-x\rangle}{|y-x|^{n+1}}\dy + \int_{\R^n\setminus B_\rho(x)} \frac{u(y) - u(x)}{|x-y|^{n+1}}\dy\label{eq fracoperator form}
\end{align}
 as a singular integral operator. The expression $P.V.\int$ denotes that the integral needs to be understood in the principal value sense $P.V.\int_{\R^n} = \lim_{\eps\to 0} \int_{\R^n\setminus B_\eps(0)}$. This interpretation will be implied in the following. The integrals in the second expression exist and use the symmetry of the integral kernel and the antisymmetry of the linear term for cancellation effects and $\rho\in(0,\infty)$ can be chosen freely. This form will be frequently used for estimates in the following. Note that our normalisation of the fractional Laplacian (and the $H^{1/2}$-semi-norm) differ from the usual one by a dimension-dependent constant.
 
  We can sum over $m\in\Z$ and obtain a term proportional to $\eps/(d_\eps^2\,|\log\eps|)$, which is much smaller than the attractive force between interfaces for $d_\eps\gg \sqrt{\eps/|\log\eps|}$. Seeing that the interesting amount of obstacles in one dimension would be $N_\eps\sim |\log\eps|$ on a periodic interval, the natural distance between obstacles scales as $d_\eps\sim1/|\log\eps|$. We are lead to the conjecture that the obstacles' contribution to the the contracting force vanishes in the limit $\eps\to 0$, and that the dynamics are independent of the presence of obstacles in this scenario.

In one dimension, the pinning is expected to have an effect on the evolution in one dimension if $d_\eps \sim \sqrt{\eps/|\log\eps|}$, which is the natural length scale in two dimensions (since the natural scaling for the number of obstacles is $N_\eps \sim \frac{|\log\eps|}\eps$). We would still expect two-dimensional solutions to become slow in this scaling since the one-dimensional case corresponds to solutions constant in one direction or the effect of pinning along whole lines, not just on circles.

A two dimensional version of the argument above gives the contribution
\[
\frac{1}{|\log\eps|}\int_{B_\eps(id_\eps, jd_\eps)} \frac1{|x|^3}\dx \approx \frac{\eps^2}{|\log\eps|\,d_\eps^3}\,\frac{1}{(i^2+j^2)^{3/2}}
\]
for a single obstacle and thus the scaling proportional to $\eps^2/(d_\eps^3\,|\log\eps|)$ for the contribution of the obstacles to the driving force. Again, inserting $d_\eps = 1/\sqrt{N_\eps}= \sqrt{\eps/|\log\eps|}$, we see that this force should be $O(\eps^{1/2}|\log\eps|^{1/2})$ which is negligible compared to the attraction between interfaces, let alone curvature. 

Technically, the relevant consideration is whether this back-of-the-envelope calculation gives the right scaling or whether the pinning induces further non-local effects. In particular, we need to investigate how quickly minimisers of the cell-problem \eqref{eq cell problem} approach $z\in\Z$ at infinity. 

Another interesting question is how pinning interacts with other terms. Namely, when external forces, the attraction of interfaces, or curvature terms are driving an interface to expand the phase $\{u=0\}$, a moving interface must create new obstacles during the movement (for example by Orowan loops). This would lead to an increase in the bulk energy term which may dominate the potential energy gain. In this case, the presence of obstacles prevents motion. 

These heuristic considerations suggest that the forest dislocations do not act as a driving force on the relevant time-scale, but may act against other driving forces to prevent motion. In this sense, it is more appropriate to think of the obstacles as creating a friction term in the dynamic case rather than a stored energy as it appears in the $\Gamma$-limit. Studying the gradient flow of the present phase field model thus provides insight into the treatment of stored energy hardening terms in macroscopic models for plastic evolution, in particular how to include a Bauschinger effect.

\subsection{Main Results and Idea of Proof}\label{section main}

We will always assume that $W\in C^{2,\alpha}(\R)$ for some $\alpha>0$, that $W$ is $1$-periodic and satisfies $W\geq c\,\dist^2(\cdot,\Z)$ for some $c>0$ and $W(0)=0$. Note that the conditions together imply that $W''(0)>0$. Additional conditions will be placed on $W$ in Sections \ref{section interface} and \ref{section corrector} to ensure the right behaviour of the second derivatives of the optimal transition profile between two neighbouring potential wells in one dimension and of a corrector function for moving interfaces. The prototype of an admissible potential is $W(z) = \sin(\pi z)+1$. Furthermore, we make the following assumptions on the distribution of obstacles $x_{i,\eps}$:

\begin{enumerate}
\item the assumptions of Theorem \ref{theorem garroni mueller} hold and additionally
\item the obstacles are arranged as perturbations of a square grid. 
\end{enumerate}

The second condition is stated in a precise fashion in Theorem \ref{theorem 2d torus}. Admissible configurations are a perfect square grid with length scale $d_\eps \sim N_\eps^{-1/2}$, small perturbations of the grid on a small fraction of this length scale, or a square grid on a slightly smaller length scale with vacancies and potentially multiple points $x_{i,\eps}$ close to a single node of the grid. A truly random arrangement of $x_{i,\eps}$ as identically and uniformly distributed points on $\T^2$ is admissible neither for our results nor in Theorem \ref{theorem garroni mueller}.

\begin{theorem*}
Under the assumptions above, we prove the following:
\begin{enumerate}
\item The gradient-flows of $\E_\eps$ do not converge to the gradient flow of $\E$ in any time-scale, nor to pure mean curvature flow.

\item \label{theoremitem2} For a suitably aligned single straight interface on $\R^2$, a time-rescaling $c_\eps\leq \sqrt{\frac{\eps}{|\log\eps|}}$ is necessary to obtain a moving interface in the limit $\eps\to 0$. In the plane or on a flat torus, two suitably aligned and sufficiently close interfaces attract on a time-scale of $c_\eps = \frac{1}{|\log\eps|}$ independently of the presence of obstacles $x_{i,\eps}$.

\item If we apply an external force to increase the amount of slip $|u|$ or mean curvature flow would act in that way, the Garroni-M\"uller energy barrier has to be overcome and the presence of obstacles can prevent such motion.
\end{enumerate}
\end{theorem*}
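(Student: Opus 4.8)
The plan is to establish the three claims by combining the sub-/super-solution machinery for the fractional Allen--Cahn equation with an energy-dissipation argument, proceeding roughly in the order (2) $\Rightarrow$ (1) $\Rightarrow$ (3), since (1) is essentially a corollary of the quantitative interface-speed estimates in (2) and a matching lower bound against the limiting flow. For item (2), I would first construct a stationary (or slowly moving) super-solution $\overline u_\eps$ to \eqref{eq: evolution} that is pinned on all the discs $B_\eps(x_{i,\eps})$ and has a single flat interface. The building block is the one-dimensional optimal transition profile $\psi$ between two wells of $W$, whose far-field decay is algebraic of order $|x|^{-1}$ because the relevant operator is $(-\Delta)^{1/2}$; one glues copies of $\psi$ near each obstacle row and uses that the sum of the pinning perturbations contributes a force of size $\eps^2/(d_\eps^3|\log\eps|) = O(\eps^{1/2}|\log\eps|^{1/2})$, as computed heuristically via \eqref{eq fracoperator form}. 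Feeding this into the comparison principle for the fractional parabolic equation (cf.\ the appendix) shows that if $c_\eps \gg \sqrt{\eps/|\log\eps|}$ the interface in the rescaled time variable does not move; conversely, for two nearby interfaces one invokes the first-order $\Gamma$-expansion \eqref{eq first order gamma} and the Gonzalez--Monneau-type attraction on the $1/|\log\eps|$ scale, constructing a shrinking pair of sub-/super-solutions whose interfaces approach each other, with the pinning term genuinely negligible against the $O(1/\log|r_2-r_1|)$ attraction.

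For item (1), the point is that (2) already exhibits two radically different behaviours incompatible with a single limiting gradient flow: on the natural gradient-flow time scale $c_\eps \equiv 1$ a suitably aligned flat (or gently curved) interface stays frozen, hence the limit cannot be the curvature flow of $\E$ (which would move a curved interface immediately) nor the $L^2$-gradient flow of the limiting energy $\E$ (whose bulk term $\Lambda\alpha\,\H^2(\{u=1\})$ would drive $\{u=1\}$ to shrink at $O(1)$ speed). To make "does not converge in any time scale" precise I would argue by contradiction: if $c_\eps \to c_0 \in (0,\infty]$ gave convergence to the flow of $\E$, the flat-interface super-solution from (2) contradicts the expected motion, while if $c_\eps \to 0$ fast enough that even the curvature term is switched off, then the limiting evolution is stationary and again cannot be the (non-stationary) gradient flow of $\E$; the only remaining window is $c_\eps \sim 1/|\log\eps|$, where by (2) one sees interface attraction but still no bulk-driven motion, so the limit is at best kink/anti-kink dynamics, not the flow of $\E$.

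For item (3), I would fix an initial datum $u_\eps^0$ with $\{u_\eps^0 \approx 1\}$ a large ball (or a configuration on which curvature, or an added forcing term $+\sigma$ with $\sigma>0$, pushes $|u_\eps|$ up) and show the solution does \emph{not} expand past the obstacle layer. Here I would use the sub-solution as a barrier from below to get that $u_\eps$ is non-decreasing in time is the wrong direction — instead one uses the pinned stationary super-solution as an upper barrier to confine $\{u_\eps = 1\}$, and complements it with an energy argument: since $\E_\eps$ is a Lyapunov functional for \eqref{eq: evolution} (plus the work done by $\sigma$), and enlarging the slip region by an $O(1)$ area forces crossing $\Theta(d_\eps^{-2})$ obstacles each costing $\approx \alpha\eps/|\log\eps|$ of cell-problem energy, i.e.\ a barrier of height $\Theta(\Lambda\alpha)$, the dissipation inequality prevents the configuration from paying this cost on the relevant time scale unless $\sigma$ exceeds the threshold $\Lambda\alpha$. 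Combining the comparison barrier (for the one-sided statement "cannot expand") with the energy barrier (for the magnitude of the force needed) yields the claim.

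The main obstacle I anticipate is the construction of the pinned super-solution with sufficiently sharp control of the error terms — specifically, verifying that the glued profile is genuinely a super-solution uniformly in $\eps$ requires precise two-sided bounds on the second derivatives (and the far-field decay) of both the one-dimensional optimal profile $\psi$ and of the corrector for a curved/moving interface, which is exactly why the paper announces extra hypotheses on $W$ in Sections \ref{section interface} and \ref{section corrector}. Getting the obstacle contributions to sum to the claimed $O(\eps^{1/2}|\log\eps|^{1/2})$ rather than something larger hinges on the "perturbed square grid" hypothesis (a genuinely random arrangement would spoil the $\sum (i^2+j^2)^{-3/2}$ convergence and the cancellations), so the geometric assumptions on $x_{i,\eps}$ must be used essentially, not cosmetically.
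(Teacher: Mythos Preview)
Your overall strategy matches the paper's: comparison via explicit sub- and super-solutions for (2), then (1) as a corollary, and a combination of barriers plus energy-dissipation for (3). However, there is a systematic confusion about which side of the comparison carries the work, and this hides the main technical difficulty.

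For a single straight interface, the obstacles exert a \emph{contracting} force on the $\{u\approx 1\}$ phase, so the delicate direction is to show the interface does \emph{not} move too fast. That requires a \emph{sub}-solution $\ul u_\eps$ which (i) vanishes on every $B_\eps(x_{i,\eps})$ and (ii) nevertheless moves only at the slow rate. Your write-up repeatedly speaks of ``the pinned super-solution'' as the hard object; in fact the super-solution in the paper is the easy one --- it is simply $\min\{\phi((x-\alpha t)/\eps),\,\bar u_l(x/\eps)\}$, a pointwise minimum of two exact solutions. The hard object is the sub-solution, and it is \emph{not} obtained by ``gluing copies of $\psi$ near each obstacle row.'' The paper first solves a separate constrained minimisation cell problem on a circle of length $l=d_\eps/\eps$ (Lemma~\ref{lemma obstacle sub-solution}), proving sharp decay $\bar u_l(x)\ge 1-\tfrac{M}{W''(0)l}-c|x|^{-2}$ for its minimiser; only with this object in hand can one flatten the optimal profile $\phi$ via a careful cut-off $f_L\circ\phi$ (Lemma~\ref{lemma modified interface}) and glue the two pieces. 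Without the cell-problem minimiser and its $|x|^{-2}$ decay you cannot control the non-local interaction between the inserted obstacle bumps and the interface well enough to verify the sub-solution inequality --- the naive gluing of transition profiles produces errors of the wrong order. In two dimensions an additional radial-extension step (Lemma~\ref{lemma sub-solution 2d}) and a second modification of the interface near each pinning site are needed; this is where the $|\log\eps|$ gap between sub- and super-solution rates in Theorem~\ref{theorem 2d slow} originates, and your sketch does not account for it.

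For (3) your self-correction is right in spirit, but note that the barrier preventing expansion is again a \emph{sub}-solution (a stationary one, placed outside the initial support), not a super-solution: one traps the growing phase from outside by straight-interface sub-solutions as in Corollary~\ref{corollary 2d general}. The energy argument then handles the complementary direction (no shrinking), exactly as you describe.
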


\begin{remark}
Point \eqref{theoremitem2} in the Theorem can be interpreted in the sense that in the unrescaled time-scale, dislocations remain stationary after unloading in contrast to the evolution of the $\Gamma$-limit. This extends the validity of the phase-field model to non-monotone loading.
\end{remark}

\begin{remark}
Informally speaking, the essence of our results can be stated as follows. In the gradient flow time scaling and in the presence of forest dislocations, straight parallel dislocation lines are stationary. If an exterior force is applied in the direction of increasing the amount of slip, the dislocations remain stationary until a certain threshold is reached, while they offer neither resistance nor help to an exterior force which acts in the direction of decreasing the amount of slip, see Figure \ref{figure effective}. In particular, the results derived here are consistent with the mechanical Bauschinger effect observed in~\cite{Demir:10}, in the sense that reverting a plastic deformation is associated with a dramatic yield strength drop, but the reversal does of course not take place spontaneously.

To simplify the constructions, we have presented proofs for slip functions taking only the values in $[0,1]$, but extensions to positive slip are possible. For technical reasons, we focus on {\em signed} slip and interfaces which are aligned with the forest dislocations. By that we mean that if the forest dislocations are located on a square grid $d_\eps \cdot\Z^2$ and a straight interface in $\R^2$ meets the $x$-axis at an angle $\phi \in [0,2\pi)$, then we require $\tan(\phi)\in \mathbb Q$. We believe that also this restriction is of a purely technical nature. 

In one dimension, the second restriction does not appear and the results are sharp.
\end{remark}

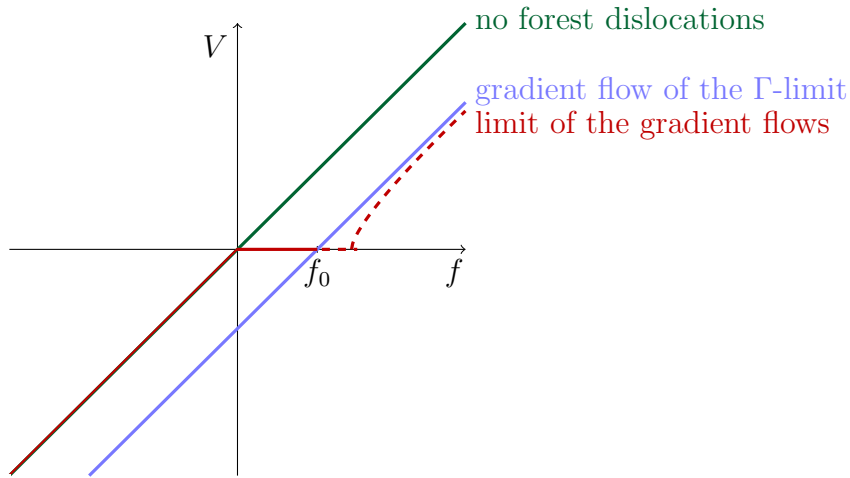
\begin{figure}
\begin{center}
\begin{tikzpicture}[scale=1.5]
\definecolor{darkgreen}{RGB}{0 100 50}
\definecolor{plotblue}{RGB}{120 120 255}
\definecolor{plotred}{RGB}{192 0 0}

\draw[->] (-2, 0 ) -- (2,0);
\draw[->] (0, -2) -- (0,2);

\node[left] at (0, 1.8){\Large $V$};
\node[below] at (1.9, 0) {\Large $f$};
\node[below] at (0.7, 0) {\Large $f_0$};
\draw[] (0.7, -0.03) -- ++ (0, 0.06);

\draw[very thick, darkgreen] (-2, -2) --++  (4,4);

\draw[very thick, plotblue] (-1.3, -2) -- (2, 1.3);

\draw[ thick, plotred] (-2,-1.995) -- ++ (2,2); 
\draw[very thick, plotred] (-0.004,0) -- (0.7, 0); 
\draw[very thick, plotred, dashed] (0.74,0) -- (1.05,0); 
\draw[very thick, plotred, dashed, domain = 0: 1, samples =100] plot ( {\x+1},  { 0.9 *\x + 1.3* sqrt(\x) /(\x+3) } ); 

\fill[white] (-2.01, -2.01) -- ++ (0.02, 0) -- ++ (0, 0.02) -- ++ (-0.02, 0);

\node[right, darkgreen] at (2, 2.05) {\Large no forest dislocations};
\node[right, plotblue] at (2, 1.4) {\Large gradient flow of the $\Gamma$-limit};
\node[right, plotred] at (2, 1.1) {\Large limit of the gradient flows};

\end{tikzpicture}
\end{center}
\caption{\label{figure effective}We consider the normal velocity $V(f)$ of a single straight dislocation in the sharp interface limit without time rescaling under an applied exterior force $f$. In the figure above, both the force and the velocity of the interface are chosen to be positive in the direction of increasing slip. The three lines illustrate the kinetic relation derived from a viscous evolution without forest dislocations, the gradient flow of the Garroni-M\"uller $\Gamma$-limit (with $f_0=\Lambda\cdot\alpha(1)$), and the limit of gradient flows of the phase field model with forest dislocations, respectively. The regime of the dotted line has not been treated here, and is to be taken as a conjecture.}
\end{figure}

Details of the Theorem can be found in the main text, most importantly in Theorems \ref{theorem 2d slow}, \ref{theorem 2d torus} and Corollary \ref{corollary 2d general}. The exact time-scaling $c_\eps$ for a gradient flow $u_\eps$ of $\E_\eps$ for a single straight and aligned interface in the plane is not known, but the bounds
\[
\sqrt{\frac{\eps}{|\log\eps|^3}} \leq c_\eps \leq  \sqrt{\frac{\eps}{|\log\eps|}}
\]
hold. Stronger results are available in one dimension, see Theorems \ref{theorem slowness line}, \ref{theorem step line}, \ref{theorem step circle} and \ref{theorem external circle}. 

As seen in the heuristic calculations above or in other situations of `dynamic meta-stability' \cite{MR1075075}, the pinning constraint induces no motion on the macroscopic time-scale since its energy dissipation is highly localised at the obstacles. A similar phenomenon is observed in the simpler ODE model 
\[
\dot x = -2x\, \left(1- \sin\left(\frac {x^2}\eps\right)\right)
\]
which is the gradient flow of $F(x) = x^2 + \eps \cos (x^2/\eps)$. Short bursts of very fast motion can be observed here before getting trapped in a local energy minimum. Replacing the $\sin$-function by a suitable modification, we can instead observe very fast motion at steep drops alternating with very slow motion on almost flat segments. The overall motion becomes slow as $\eps\to 0$ due to the many flat segments of the potential. An energy dissipation argument for such a system can be found in \cite{MR2992854}.

Unfortunately, directly using energy dissipation techniques appears impossible in our model. Instead, we construct viscosity sub- and super-solutions of \eqref{eq gradient flow} which `trap' a solution. If we can establish a certain behaviour for both the sub- and super-solution, it follows that it also holds for the solution. Using suitable estimates, the slowness of sub- and super-solutions (or their behaviour according to kink/anti-kink attraction) can be established through a rigorous version of the heuristic calculation given above. For a one-dimensional problem without pinning, a similar approach has been used in \cite{gonzalez2010slow}.

The main difficulty in the proof therefore lies in the construction of suitable sub-solutions for the non-local evolution equation. We  first prove analogue statements in one dimension in Theorems \ref{theorem slowness line}, \ref{theorem step line}, \ref{theorem step circle} and \ref{theorem external circle} because this is technically easier and then modify the arguments to yield the result in two dimensions.

The construction proceeds in two steps. First we construct a stationary sub-solution at a pinning site by considering a periodic constrained minimisation cell problem and obtaining sharp decay estimates for the solution as $\eps\to0$. Then we carefully glue the stationary sub-solution to a modified optimal profile for the transition between the potential wells with precise estimates in order to not destroy the sub-solution property.

\section{One-dimensional Dynamics}\label{section 1d}
\label{section sub-solutions}

In this section, we  construct sub- and super-solutions to the evolution equation \eqref{eq gradient flow} in one space dimension for various initial conditions. This is significantly simpler than the two-dimensional case even at straight parallel interfaces, so we devote an entire section to demonstrate the techniques that will later be refined for the two-dimensional evolution. The rate we obtain is optimal in one dimension, while there is a difference of order $O(|\log\eps|)$ between the upper and the lower bound in the two-dimensional case.

\subsection{Periodic Obstacles}

We consider a rescaled version of the problem where a forest dislocation/obstacle has length scale $O(1)$. The same length scale occurs in the transition of an optimal profile between two potential wells.

\begin{lemma} \label{lemma obstacle sub-solution}
Denote by $S^1_l$ the circle of length $l\gg 0$ and take $R, M>0$ and an arbitrary point $x_0\in S^1_l$. Then, if $l\gg1$ is large enough, there exists a function $\bar u_l\in H^{1/2}(S^1_l)$ such that
\[
\bar u_l\equiv 0\text{ on }B_R(x_0)\qquad \text{and}\qquad \A \bar u_l - W'(\bar u_l) = \frac{M}{l} \text{ on }S_l^1\setminus B_R(x_0)
\]
in the weak sense. The function $\bar u_l$ has the following properties:

\begin{itemize}
\item $0\leq \bar u_l < 1$ and $\bar u_l>0$ outside $\overline{B_R(x_0)}$.

\item $\bar u_l\in C^{0,1/2}(S^1_l)$ and $\bar u_l \in C^\infty_{loc}(S_l^1\setminus \overline{B_R(x_0)})$.

\item If $x\in S^1_l$ and $\bar x$ denotes the reflection of $x$ through $x_0$, then $\bar u_l (\bar x) = \bar u_l(x)$.

\item If we identify $S^1_l = [-l/2, l/2)$ and $x_0=0$, then $\bar u_l$ is monotonically increasing on $[0,l/2)$. 

\item Let $\beta>0$. Then the set $\{\bar u_l<1-\beta\}$ is contained in $[-c_\beta, c_\beta]$ for some $c_\beta>0$ independently of $l$.

\item Let $-x_0$ denote the antipodal point of $x_0$ Then
\[
 u_l (-x_0) \leq 1 - \frac1{W''(0)} \left(\frac M{l} + \frac{R}{l^2}\right).
\]

\item There exist constants $c_1, c_2>0$ such that
\[
\ubar_l(x) \geq 1 - \frac{M}{W''(0)\,l} - \frac{c_2}{|x|^2}.
\]
\end{itemize}
All constants may depend on $R$. $M$ may depend on $l$ and the constants are uniform as long as $M_l\leq M_0$ is uniformly bounded.
\end{lemma}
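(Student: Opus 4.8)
The plan is to construct $\bar u_l$ by a constrained variational problem and then extract the stated quantitative properties from the Euler--Lagrange equation and comparison arguments. Concretely, I would minimise the functional
\[
\F_l(w) = \frac12\,[w]_{1/2,S^1_l}^2 + \int_{S^1_l} W(w)\dx - \frac{M}{l}\int_{S^1_l\setminus B_R(x_0)} w\dx
\]
over the admissible class $\{w\in H^{1/2}(S^1_l) : 0\le w\le 1,\ w\equiv 0 \text{ on }B_R(x_0)\}$. Existence follows from the direct method: the constraint set is convex and closed under weak $H^{1/2}$-convergence, the Gagliardo seminorm is weakly lower semicontinuous, $W$ is continuous and bounded on $[0,1]$, and the linear term is continuous; coercivity is immediate on the bounded set $\{0\le w\le1\}$. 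The Euler--Lagrange relation on the open set $S^1_l\setminus\overline{B_R(x_0)}$ gives $\A\bar u_l - W'(\bar u_l) = M/l$ weakly, with the obstacle constraints $0\le\bar u_l\le 1$; regularity theory for the fractional Laplacian then upgrades this to $C^\infty_{loc}$ away from $\overline{B_R(x_0)}$ and $C^{0,1/2}$ globally (the Dirichlet datum on a ball forces the optimal $1/2$-Hölder modulus). For the symmetry and monotonicity claims I would use the uniqueness/rigidity coming from a rearrangement or reflection argument: reflecting through $x_0$ preserves the functional and the constraint, so by a strict-convexity-type or maximum-principle argument (note $W$ need not be convex, so one argues via comparison on the torus exploiting that $\A$ is order-preserving and the nonlinearity only enters through $W'$) the minimiser is symmetric; monotonicity on $[0,l/2)$ follows similarly by comparing $\bar u_l$ with its translates, or via a moving-plane scheme adapted to the fractional setting.

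Next I would establish the bounds. The lower bound $\bar u_l>0$ outside $\overline{B_R(x_0)}$ follows from the strong maximum principle for $\A$ (a nonnegative weak supersolution that is not identically zero is strictly positive where it is not pinned). The upper bound $\bar u_l<1$ comes from a strict comparison: $w\equiv 1$ is a strict supersolution of the equation since $\A 1 - W'(1) - M/l = -M/l < 0$, so $\bar u_l$ cannot touch $1$. The key quantitative estimates are the pointwise bounds near $1$. For the decay estimate $\bar u_l(x)\ge 1 - \frac{M}{W''(0)l} - \frac{c_2}{|x|^2}$, the idea is to build an explicit subsolution barrier of the form $1 - a - b/|x|^2$ (valid for $|x|$ away from the obstacle, say $|x|\ge 2R$) and verify, using the singular-integral representation \eqref{eq fracoperator form}, that $\A(b/|x|^2)$ contributes a controlled amount: since $v(x)=|x|^{-2}$ on $\R$ satisfies $(-\Delta)^{1/2}v \le C/|x|^3$ away from the origin (the profile decays faster than the Riesz kernel), while $W'(1-a-b/|x|^2) \approx W''(0)(a + b/|x|^2)$ for small argument, choosing $a = M/(W''(0)l)$ makes the leading constant terms cancel the forcing $M/l$ and leaves a sign-definite remainder that dominates the $|x|^{-3}$ error when $b=c_2$ is large enough. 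Matching this barrier to $\bar u_l$ on the boundary circle $\{|x|=2R\}$ (where $\bar u_l\ge$ some positive constant depending only on $R$, by the maximum principle and the already-established positivity) and at infinity/the antipodal point closes the comparison. The antipodal bound $\bar u_l(-x_0)\le 1 - \frac1{W''(0)}(M/l + R/l^2)$ is an integrated/energy-type identity: integrating the equation over $S^1_l\setminus B_R(x_0)$, the $\A$ term integrates to a boundary flux of size $O(R/l^2)$ (using the decay estimate and that $\bar u_l$ is flat near the antipode, so its fractional Laplacian there is $\le 0$ and small), and $\int W'(\bar u_l)\approx W''(0)\int(1-\bar u_l)$ together with monotonicity bounding $1-\bar u_l$ from below by $1-\bar u_l(-x_0)$ on a set of size $\sim l$ yields the claim. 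The set estimate $\{\bar u_l<1-\beta\}\subset[-c_\beta,c_\beta]$ is then an immediate consequence of monotonicity plus the decay bound: once $|x|\ge c_2^{1/2}\beta^{-1/2}$ and $M/(W''(0)l)<\beta/2$ we have $\bar u_l(x)\ge 1-\beta$.

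The main obstacle I anticipate is the decay estimate $\bar u_l(x)\ge 1 - M/(W''(0)l) - c_2/|x|^2$, and in particular making the barrier argument rigorous given the nonlocality: unlike the local case, the subsolution barrier $1-a-b/|x|^2$ must be defined (and its $\A$ computed or estimated) on the whole torus, not just on an annulus, so one has to either extend the barrier in a controlled way across $\overline{B_R(x_0)}$ and the antipodal region or work with a cutoff and absorb the resulting error terms — precisely the point where the extra hypotheses on $W''$ (alluded to in the paper as needed in Sections \ref{section interface} and \ref{section corrector}) and the largeness of $l$ enter. A secondary technical point is the uniformity of all constants in $l$ (and the claim that they depend only on $R$ and on the uniform bound $M_l\le M_0$); this requires checking that every comparison function and every constant produced above is built only from $R$, $M_0$, $W''(0)$, and universal fractional-Laplacian constants, with no hidden $l$-dependence — which is plausible precisely because the obstacle and the transition profile both live at scale $O(1)$ while $l\to\infty$ only adds a long, nearly-flat region where $\bar u_l\approx 1$.
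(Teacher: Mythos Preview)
The variational setup has a critical gap. With the sign on the linear term chosen so that the Euler--Lagrange equation reads $\A w - W'(w) = M/l$, your functional becomes $\F_l(w) = \tfrac12[w]_{1/2}^2 + \int W(w) + \tfrac{M}{l}\int w$, which is nonnegative on the admissible class $\{0\le w\le 1,\ w\equiv 0 \text{ on } B_R\}$, and its unique global minimiser is $w\equiv 0$: all three terms vanish there and are otherwise nonnegative. The direct method therefore hands you the trivial solution, not the desired one sitting near $u\approx 1$. (As actually written, your sign $-\tfrac{M}{l}\int w$ yields the Euler--Lagrange equation $\A w - W'(w) = -M/l$, which is the wrong equation; and with that sign the upper obstacle $w\le 1$ becomes active, so the equation fails anyway.) The point is that on the circle the energy has two basins, near $0$ and near $1$, and the forcing $+M/l$ tilts things toward the $0$-basin; some device is needed to select the $1$-basin. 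The paper does this by minimising under the \emph{open} integral constraint $\tfrac1l\int u > \tfrac12$ (after modifying $W$ to a double-well outside $[0,1]$ for coercivity), and then spends the bulk of the proof showing that the constrained minimiser lies in the interior of this set---so the constraint is inactive and the Euler--Lagrange equation holds---via an energy argument comparing $|\{u\ge 2/3\}|$ and $|\{u\le 1/3\}|$ and invoking a rearrangement estimate. This step, not the barrier comparison, is the core of the construction and is entirely absent from your proposal.

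A second, related gap: your barrier comparison for the decay estimate $\bar u_l \ge 1 - \tfrac{M}{W''(0)l} - \tfrac{c_2}{|x|^2}$ implicitly requires knowing in advance that $\bar u_l \ge 1-\beta$ outside a fixed interval $[-c_\beta,c_\beta]$, because the maximum-principle comparison between $\bar u_l$ and the barrier only works where $W'$ is monotone, i.e.\ where both functions are already close to $1$. You propose to deduce the growth estimate \emph{from} the decay estimate, which is circular. The paper proves the uniform growth bound first (as a byproduct of the energy argument above) and only then runs the barrier comparison on the region where $\bar u_l\ge 1-\beta$. Finally, the antipodal upper bound follows most cleanly by evaluating the equation pointwise at the maximum $-x_0$, where $\A\bar u_l(-x_0)\le \int_{-R}^{R}\tfrac{-\bar u_l(-x_0)}{|y+l/2|^2}\,dy \le -R/l^2$ and hence $W'(\bar u_l(-x_0))\le -M/l - R/l^2$; your integrated approach is vaguer and would need considerably more work to extract the sharp constant $1/W''(0)$.
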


\begin{proof}
{\bf Set Up.} For the time being, replace $W$ by a smooth double-well potential on $\R$ which agrees with the original on $[0,1]$ which is monotone and convex outside that interval such that $W'$ has linear and $W$ has quadratic growth at $\infty$ and such that $W(t)\geq t$ for $t\geq 2$. Once we see that all relevant functions take values only in $[0,1)$, we can pass back to the original multi-well potential. Consider the Hilbert space
\[
X_l:= \left\{u\in H^{1/2}(S^1_l)\:|\: u\equiv 0 \text{ on }B_1(x_0)\right\}.
\]
We will show that for every $l\gg 1$, the energy
\[
\E_{l}(u) = [u]_{1/2, S_l^1}^2 + \int_{S_l^1}W(u) \dx + \frac{M}l \int_{S_l^1} |u|\dx 
\]
has a minimiser $\ubar= \ubar_{l}$ in the open set
\[
U_l := \left\{u\in X\:\bigg|\:\frac1l \int_{S^1_l}u \dx > \frac12\right\}.
\]
Subsequently, we will show that $\ubar_l$ has the properties we claim in the Lemma.

{\bf Finite Energy.} First, we show that there exist functions $u_l\in U_l$ such that
\[
\E_l(u_l) \leq C
\]
for a universal constant $C$ (which depends on $R,M$). Take a smooth function $\eta:\R\to\R$ such that $\eta(r) = 0$ for $r\leq R$, $\eta(r) = 1$ for $r\geq 2R$ and monotone in between. Then set
\[
u_l(x) = \eta(d(x,x_0))
\]
where $d$ is the usual distance function on the circle. The function $u_l$ is smooth and has energy 
\begin{align*}
\E_{l}(u_l) &= \int_{S_l^1}\int_{S_l^1}K(x,y)\,|u_l(x)-u_l(y)|^2\dx\dy + \int_{S_l^1}W(u_l)\dx + \frac Ml\int_{S_l^1}u_l\dx\\
	&= \int_{B_{3R}(x_0)}\int_{B_{3R}(x_0)} K(x,y)\,|u_l(x)-u_l(y)|^2 \dx\dy\\
	&\qquad + \int_{S_l^1\setminus B_{3R}(x_0)}\int _{B_{2R}(x_0)} K(x,y)\,{|1-u_l(x)|^2}\dx \dy + 2\int_R^{2R} W(\eta) \dr + M\\
\end{align*}
Recall that the kernel $K$ of the $H^{1/2}$ semi-norm on $S^1_l$ is
\[
K(x,y) = \sum_{k\in \Z} \frac{1}{|x- (y+lk)|^{2}} = \frac1{l^2}\sum_{k\in \Z} \frac{1} { |\,(x-y)/l + k\,|^2} = \frac{1}{4\pi^2\,l^2\,\sin^2\left(\frac{x-y}{\pi l}\right)}
\]
when we identify $S_l^1 = (-l/2, l/2]$. To see this, take the circle as covered periodically by $\R$ and use that the half-Laplacian on the circle agrees with the half-Laplacian of the periodically lifted function on $\R$. Clearly, this gives the above kernel for the half-Laplacian and by extension for the $H^{1/2}$-semi-norm. Observe that $|x-y| \leq |x| + |y| \leq 3R + l/2$ for $x\in B_{3R}(0)$, $y\in S_l^1 = [-l/2, l/2]$, so
\begin{align*}
\left|\,K(x,y) - \frac{1}{|x-y|^{2}}\right| &= \sum_{k\in \Z\setminus \{0\}} \frac{1}{|x- (y+lk)|^{2}}\\
	&= \frac{1}{l^{2}} \sum_{k\in \Z\setminus \{0\}} \frac1{|(x-y)/l + k|^{2}}\\
	&\leq l^{-2} \max_{z\in B_{1/2 + 3R/l}(0)}\sum_{k\in \Z\setminus\{0\}}\frac{1}{|z-k|^{2}}\\
	&\leq C\,l^{-2}
\end{align*}
where $C$ is uniform for all $l\gg 1$. Using $|x-y|^2 \geq |y|^2/3$ for $x\in B_{2R}(x_0)$ and $y\in S^1_l\setminus B_{3R}(x_0)$, the remaining integral is then estimated by 
\begin{align*}
\int_{S_l^1\setminus B_{3R}(x_0)}\int _{B_{2R}(x_0)} K(x,y)\,{|1-u_l(x)|^2}\dx \dy &\leq \int_{S_l^1\setminus B_{3R}(x_0)}\int_{B_{2R}(x_0)}K(x,y)\dx\dy\\
	&\leq \int_{S_l^1\setminus B_{3R}(x_0)}\int_{B_{2R}(x_0)} \frac{1}{|x-y|^{2}} + C\,l^{-2} \dx \dy\\
	&\leq \int_{S_l^1\setminus B_{3R}(0)} \int_{B_{2R}(0)} \frac{3}{|y|^{2}} + C\,l^{-2} \dx\dy\\
	&= 2R\left(\int_{[-l/2, l/2]\setminus[-3R, 3R]}\frac{3}{|y|^2}\dy + C\,l^{-1}\right)\\
	&\leq C.
\end{align*}
Similarly, the local integral is uniformly bounded for large $l$ where the kernel approaches the kernel of the half-Laplacian on the real line.

{\bf Minimisers.} The direct method of the calculus of variations establishes that $\E_l$ has a minimiser $\bar u_l$ in the closure $\overline {U_l}$ of $U_l$. We will show that for large enough $l$, $\ubar_{l}$ must lie inside $U_l$. 

Assume that $1/2 < \frac1l \int_{S^1_l}\ubar_l\dx <4/7$. Then there are two possibilities:

\begin{itemize}
\item There exists a set $A_l\subset S^1_l$ such that $|A_l| \to\infty$ as $l\to\infty$ and $1/3\leq \ubar_l\leq 2/3$ on $A_l$ or

\item there exists no such set.
\end{itemize}

In the first case, we observe that
\[
\E_l(\ubar_l) \geq \int_{S^1_l}W(\ubar_l)\dx \geq |A_l|\,\min_{t\in [1/3, 2/3]} W(t)
\]
which goes to infinity when $l$ becomes large. Hence this is not possible for minimisers for large enough $l$. In the second case, we know that the sets
\[
B_l:= \{\ubar_l\geq 2/3\}, \qquad D_l:= \{\ubar_l\leq 1/3\} 
\]
satisfy $|B_l| + |D_l| \geq l-c$ for some constant $c>0$. Observe that $\bar u_l\geq 0$, since this cut-off decreases the energy and cannot violate the integral condition. So we deduce that
\begin{align}
4/7 \cdot l &>\int_{S^1_l}u\dx \geq 2/3\,|B_l|
\end{align}
Conversely, we know that 
\[
\int_{\{\ubar_l\geq 2\}} \ubar_l \dx \leq \int_{S^1_l}W(\ubar_l)\dx \leq C
\]
since $W(t)\geq t$ for $t\geq 2$. Therefore
\begin{align*}
\frac l2&< \int_{S^1_l}u\dx \leq 2\,|B_l| + C + \frac23\,|S^1_l\setminus (B_l\cup D_l)|  + \frac13\, |D_l|.
\end{align*}
We renormalise $b_l:=|B_l|/l$ and $d_l:= |D_l|/l$, so the inequalities read
\[
6/7 > b_l, \qquad 1/2 < 2 b_l + C/l + d_l/3.
\]
When we ignore the middle term in the second inequality and approximate $d_l= 1-b_l$, that simplifies to
\[
1/2 < 2\,b_l + (1-b_l)/3 = 5 b_l/3 + 1/3 \quad\LRa\quad 1/10 < b_l,
\]
so in total we have the weaker bounds
\[
1/11 < b_l < 6/7
\]
for large enough $l$, also taking into account the error term. Consequently, we obtain that $\delta < b_l, d_l <1-\delta$ for some $\delta>0$ and all large enough $l$. We can roll up the circle to an interval $I_l = [0, l]$ and use $K(x,y) \geq |x-y|^{-2}$. Then the re-arrangement result \cite[Proposition 6.1]{MR1657316} states that $B_l$ and $D_l$ are ideally distributed as two sub-intervals at opposite ends of $I_l$. We compute
\begin{align*}
\E_{l}(\ubar_l) &\geq [\ubar_l]_{1/2, S^1_l}^2\\
	&\geq \int_{B_l} \int_{D_l} \left(\frac13\right)^2\,K(x,y)\dx\dy\\
	&\geq \frac1{9} \int_{0}^{|B_l|}\int_{|B_l| + c}^{l} \frac{1}{|x-y|^2}\dx\dy\\
	&= \frac1{9} \int_0^{|B_l|} \frac1{|B_l|+c-y} -  \frac1{l-y} \dy\\
	&= \frac1{9} \left[\log(|B_l|+c) - \log(c) - \log(l) + \log(l-|B_l|)\right]\\
	&= \frac1{9} \log\left(\frac{(|B_l|+ c)(l-|B_l|)}{c\,l}\right)\\
	&\sim \log\left(\frac{|B_l|\,|D_l|}{|B_l| + |D_l|}\right).
\end{align*}
If both $|B_l|$ and $|D_l|$ go to infinity as $l\to\infty$ (as above), then $\E_l(\ubar_l)\to\infty$ as well, which leads to a contradiction. This key estimate is used in \cite[Lemma 4.5]{MR1657316} to establish $\Gamma$-convergence to the perimeter functional for the fractional Modica-Mortola energy with a double well potential and no pinning. Thus indeed $|D_l|\leq C$ and
\[
\frac1l \int_{S^1_l} \ubar_l\dx > 4/7,
\]
and $\ubar_{l}\in U_l$. We finally come to establishing the properties we claimed for $\ubar_l$.

{\bf Symmetry and Monotonicity.} Due to \cite[Theorem 3]{baernstein1994unified} $\ubar^* = \ubar$ agrees with its monotonically decreasing rearrangement around $-x_0$ since rearranging decreases the non-local term in the energy while leaving the local ones invariant and preserving the integral constraint. So, when we identify $S^1_l = (-l/2, l/2]$ and $x_0=0$ by the usual covering map, we see that $\ubar_l(x) = \ubar_l(-x)$ and $\ubar_l$ is monotonically increasing on $[0, l/2)$.
 
{\bf Growth.} When showing that $\ubar_l\in U_l$, we showed that one of the sequences of sets
\[
B_{ l}:= \{x\in S^1_l\:|\:\ubar_l\geq 2/3\},\qquad D_{l}:= \{x\in S^1_l\:|\: \ubar_l\leq 1/3\}
\]
must have uniformly bounded measures. Since $\frac1l\int_{S^1_l}\ubar_l\dx >1/2$, this can only be $B_l$. The argument can easily be generalised to see that the sequence of sets
\[
D_l^\beta := \{x\in S^1_l\:|\:\ubar_l\leq 1-\beta\}
\]
must have uniformly bounded measures depending on $\beta>0$. Since $\ubar_l$ is monotone growing away from $x_0 =0$, the sets $D_l^\beta$ are intervals and there exists a constant $c_\beta>0$ such that
\[
D_l^\beta \subset (-c_\beta, c_\beta) \subset (-l/2, l/2] = S^1_l
\]
for all sufficiently large $l$.

{\bf Boundedness.} We have seen that $\ubar_l\geq 0$ and that
\[
\frac1l\int_{S^1_l} \ubar_l \dx > \frac47, \qquad \frac1l\int_{\{\ubar_l\geq 2\}}\ubar_l\dx \leq \frac Cl.
\]
So clearly, $\min\{\ubar_l, 2\}$ satisfies the integral constraint for large enough $l$, vanishes on $B_1(x_0)$ and has strictly lower energy than $\ubar_l$ unless $\ubar_l\leq 2$ since we modified $W(t)$ to be monotonically increasing for $t\geq 1$. Thus $\ubar_l\leq 2$ for all sufficiently large $l$. 

{\bf Regularity.} Since $\ubar_l$ lies in the interior of $U_l$, we can calculate the change of energy in $\E_l$ under small variations of $\ubar_l$. $\E_l$ is smooth when we vary only where $\ubar_l>0$. Thus, we have the Euler-Lagrange equation
\[
\begin{cases}\hspace{3mm}\ubar_l = 0 &\text{ on }[-a_l,a_l]\\ \A \ubar_l = W'(\ubar_l) + M/l &\text{ in }S^1_l\setminus[-a_l,a_l]\end{cases}
\]
in a weak sense. The right hand side of the equation lies in $L^\infty(S^1_l)$ since $\ubar_l$ is bounded. Due to \cite{MR3161511}, $\ubar_l$ is a viscosity solution of the same equation and thus continuous. In fact,
\[
\ubar_l \in C^{0,1/2}(S^1_l)\cap C^\infty\left(S^1_l\setminus[-a_l,a_l]\right)
\]
due to \cite[Propositions 1.1 and 1.4, Theorem 1.2]{ros2014dirichlet}. The proofs in the literature are usually presented for bounded domains on Euclidean space, but also work in the periodic case.

{\bf Determining the Vanishing Set.} It remains to show that $a_l=1$.
Assume that $a_l>1$ and take $\phi \in C_c^\infty(1,a_l)$ with $\phi\geq 0$. Then we know that
\begin{align*}
0 &\leq \frac{\E_l(\ubar_l+t\phi) - \E_l(\ubar_l)}t\\
	&= \langle \ubar_l,\phi\rangle_{H^{1/2}} + \frac t2\,[\phi]_{H^{1/2}}^2 + \frac1t\int_{S_l^1}W(t\phi)\dx + \frac Ml\int_{S_l^1}\phi\dx\\
	&\to -\,\langle \A\ubar_l, \phi\rangle_{L^2} + \frac{M}l\int_{S_l^1}\phi\dx
\end{align*}
as $t\to 0$ since $W$ quadratically at zero and $W(\ubar_l+t\phi) = W(\ubar_l)$ where $\ubar_l\neq 0$. We can replace the $H^{1/2}$-inner product with an $L^2$-inner product since $\ubar_l \equiv 0$ is smooth on the support of $\phi$. It follows that $\A u_l \leq \frac{M}l$ on $(1,a_l)$, but this can easily be seen to be false for all large $l$ since
\[
\A\ubar_l(x) \geq \int_{c_\eta}^{l/2}\frac{\eta}{|y-x|^2}\dy \geq \frac{\eta}{c_\eta} - \frac{2\eta}{l} \longrightarrow\hspace{-4.4mm}\not\hspace{4.6mm} 0
\]
for all $\eta\in (0,1)$. Thus $a_l\equiv 1$ for $l\gg 1$.

{\bf Improved Boundedness.} By boundedness, symmetry and monotonicity, $\ubar_l$ is maximal at the antipodal point $-x_0$ of $x_0$. Due to smoothness, we can easily argue that $\A\ubar_l$ is defined pointwise around $-x_0$, i.e. 
\[
\A \ubar_l(-x_0) \leq \int_{-R}^R\frac{-\ubar_l(-x_0)}{|x + l/2|^2} \leq -\frac{R}{l^2}
\]
and thus
\[
-2R/l^2 \geq \A \ubar_l(-x_0) = W'(\ubar_l(-x_0)) + M/l
\]
holds pointwise. Since $W'>0$ outside $[0,1]$, this directly shows that
\[
\ubar\leq \ubar_l(-x_0) \leq 1- \frac1{W''(0)} \left(\frac M{l} + \frac{R}{l^2}\right) < 1
\]
for large enough $l$. From now on, we can use the original potential $W$.

{\bf Improved Growth.} Take $0<\beta<1/8$ such that $-W'(1-t) \geq \frac{W''(1)\,t}2$ for $t\in[0,\beta]$ and that $W'$ is monotonically increasing on $[1-\beta, 1]$. Assume that $S_l^1 = J \cup J^c$  and $w_l\in C^0(S_l^1) \cap H^{1/2}(S_l^1)$ such that $\ubar_l\geq w_l$ on $J$ and 
\[
\A w_l - W'(w_l)\geq \frac{M}l\quad \text{as well as}\quad \ubar_l\geq 1-\beta \text{ on }J^c.
\]
Then by a simple application of the maximum principle we have $\ubar_l \geq w_l$ on $S^1_l$. Assume the contrary. Then $w_l-\ubar_l$ has a positive maximum somewhere in $J^c$. We find that
\[
0\geq \A(w_l-\ubar_l) \geq W'(w_l) + M/l - W'(\ubar_l) -M/l = W'(w_l) - W'(\ubar_l) >0
\]
at this point since $1\geq w_l>\ubar_l\geq 1-\beta$ are both in the area where $W'$ is monotonically increasing and obtain a contradiction. We will now construct comparison functions $w_l$. 

By monotonicity and growth beyond $1-\beta$ on uniformly finite intervals, there exists an interval $J_\beta$ around $x_0$ such that $\ubar_l\geq 1-\beta$ outside of $J_\beta$ independently of $l$. Take $l\gg 1$ and define
\[
w_l:S_l^1=(-l/2,l/2]\to \R, \qquad w_l(x) = \left(1- \frac{\gamma M}l - \frac{c_2}{|x|^2}\right)_+.
\]
We easily calculate 
\[
w_l(x) = 0 \quad \LRa\quad |x| \leq \sqrt{\frac{c_2}{1- \gamma M/l}}, \qquad w_l(x)\geq 1-\beta \quad\LRa\quad |x|\geq \sqrt{\frac{c_2}{\beta - \gamma M/l}} \geq 2\,\sqrt{2\,c_2},
\]
so in particular $\w_l$ vanishes on an interval 
\[
[-\sqrt{c_2} , \sqrt{c_2} ] \subset J \subset [-\sqrt{2c_2}, \sqrt{2c_2}].
\]
In particular we can choose $c_2$ so large that $\ubar_l(x) \leq 1-\beta$ implies $w_l(x) = 0$. So we observe that
\begin{align*}
\A w_l (x) - W'(w_l(x)) &= \A w_l(x) - W'\left(1 - \frac{\gamma M}l - \frac{c_2}{|x|^2}\right)\\
	& \geq \A w_l(x) + \frac{W''(1)}{2} \left(\frac{\gamma M}{l} + \frac{c_2}{|x|^2}\right)\\
	&\geq \frac{W''(1)\,\gamma}{2}\,\frac Ml + \left(\A w_l(x) + \frac{W''(1)\,c_2}{2\,|x|^2}\right)
\end{align*}
whenever $w_l\geq 1-\beta$. Therefore we can choose $\gamma \geq 2/W''(1)$ and only need to show that the second term is non-negative for $|x| \geq 4\sqrt{c_2}$. Without loss of generality, take $x>0$. Now we observe that $w_l(x) =0 \Ra |x|\leq\sqrt{2\,c_2}$ and recall that $\frac{c }{|x|^2} \leq K_l(x) \leq \frac{C}{|x|^2}$ where $K_l$ is the kernel of the half-Laplacian on $S^1_l = (-l/2, l/2]$ and the constants $c, C$ are uniform in $l\gg 1$.

First, let us assume that $0 < x < l/2 -1$. We disregard the integrals with the right sign except for the one over $[x, x+1)$ and count the ones pulling down twice when the are over a subset of $[0, l/2)$ instead of considering the ones from $(-l/2, 0]$. Since the kernel $K_l$ is monotone, this is admissible. Pick $2/3 < \alpha <1$ and compute
\begin{align*}
\A w_l(x) &=  P.V.\int_{-l/2}^{l/2} [w_l(y) - w_l(x)]\,K_l(x-y)\dy\\
	&\geq - \int_{\sqrt{2c_2}}^{\sqrt{2c_2}}K_l(x-y)\dy + 2\,\int_{\sqrt{2c_2}}^{x^\alpha } K_l(x-y)\, [w_l(y)-w_l(x)]\dy \\
	&\qquad  + 2\,\int_{x^\alpha}^{x-1} K_l(x-y)\int_x^y w_l'(t)\dt \dy + \int_{x-1}^{x+1} K_l(x-y) \int_x^y(y-t)\,w_l''(t)\dt\dy \\
	&\geq -C\left(\frac{\sqrt{2c_2}}{|x-\sqrt{2c_2}|^2} + \frac{1}{|x-x^\alpha|^2}\int_{c_2}^{x^\alpha}\frac{c_2}{y^2}\dy + \frac{c_2}{|x^\alpha|^3}\int_{x^\alpha}^{x-1} \frac{1}{|y-x|}\dy + \frac{c_2}{|x|^4}  \right)
\end{align*}
In the first term, we used that the jump is at most $1$, in the second we pulled out the integral kernel and used only the negative part of the difference. If $x^\alpha\leq \sqrt{2c_2}$, the second term simply disappears. In the third term, we kept the integral kernel and used the largest possible value of the derivative, and the fourth term is estimated solely by the second derivative of $|x|^{-2}$. Constants were absorbed into $C$. Thus
\[
\A w_l(x) \geq  -C\left(\frac{\sqrt{2c_2}}{|x-\sqrt{c_2}|^2} + \frac{1}{|x-x^\alpha|^2} + \frac{c_2 \log(x)}{|x^\alpha|^3} + \frac{c_2}{|x|^4}  \right)
\]
When we choose $c_2$ large enough, we can see that 
\[
\A w_l(x) \geq - \frac{W''(1)\,c_2}{2\,|x|^2}
\]
for all $|x|\geq \sqrt{2\,c_2}$. Finally, we observe that the argument can also applied for $|x|>l/2-1$ when we replace $1/x^2$ by a function $f_l\in C^2(S_l^1)$ satisfying
\[
\frac c{|x|^2} \leq f_l(x) \leq \frac{C}{|x|^2}, \qquad |f_l'(x)| \leq \frac{C}{|x|^3}, \qquad |f_l''(x)|\leq \frac {C}{|x|^3}.
\]
This establishes the growth estimate for $2/W''(1)$. With this estimate, we can go back and improve the growth to $1/W''(1)$.
\end{proof}

\begin{remark}\label{remark decay 2D}
The same method can be applied on tori $T_l^d = (S_l^1)^d$ in any dimension $d\geq 1$, but only yields solutions to
\[
\A u - W'(u) = \frac{M}{l^d}
\]
with faster decaying constant on the right hand side. That decay rate is not sufficient for later applications, since an interface along a straight line is essentially one-dimensional and exerts a force of order $1/l$. 

The case $d=1$ is special in the proof above since $S^d_l = T^d_l$. On tori in higher dimensions, the re-arrangement is significantly more involved, since for example the monotonically increasing and monotonically decreasing rearrangements do not agree. 
\end{remark}

\begin{remark}
Assume that we are instead constructing an obstacle with boundary conditions at $a\in \Z$, say $a\geq 2$. Then we modify $W$ outside $[0,a]$ instead and obtain the same results as before under the integral side condition $\frac1l \int_{S^1_l} u\dx > a-1/2$. The proof is only slightly more involved. 
\end{remark}

For technical purposes, it may be helpful to continue the solutions onto a larger set.

\begin{lemma}\label{extension lemma}
Let $m\in\N, m\geq 2$ and $u\in C^2(S^1_l)$. Identify $S^1_r = (-r/2, r/2]$ and define
\[
u_{l, ml}: S^1_{ml}\to \R, \quad u_{l,ml} (x) = \begin{cases}u(x) &x\in S^1_l\\ u(l/2) &x\in S^1_{ml}\setminus S^1_l\end{cases}.
\]
If $u$ is maximal at $l/2$, then 
\[
\A^{S^1_{ml}} u_{l,ml}(x)\geq \A^{S^1_l}u(x)
\]
for $x\in S^1_l$ and
\[
\A^{S^1_{ml}}u_{l, ml}(x) \geq \A^{S^1_l}u(l/2)
\]
for $x\in S^1_{ml}\setminus S^1_l$. The same holds true for $m=\infty$ (i.e.\ $S^1_{ml} = \R$).
\end{lemma}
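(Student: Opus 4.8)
\emph{Plan.} The plan is to reduce both assertions to a pointwise comparison of the periodised singular kernels
\[
K_T(a,b)=\sum_{k\in\Z}\frac{1}{|a-b-kT|^2}=\frac{\pi^2}{T^2\sin^2\!\big(\tfrac{\pi(a-b)}{T}\big)},
\]
in terms of which $\A^{S^1_T}v(x)=\mathrm{P.V.}\int_{S^1_T}\big(v(y)-v(x)\big)\,K_T(x,y)\dy$, and for $T=\infty$ the kernel is simply $|x-y|^{-2}$ on $\R$. Write $\mu:=u(l/2)=\max_{S^1_l}u$; since $l/2$ is a smooth interior point of the circle at which $u$ is maximal, $u'(l/2)=0$, and therefore the extension $u_{l,ml}$ is globally $C^{1,1}$ — enough regularity for all principal value integrals below to be defined pointwise and for the classical identity $\A^{S^1_T}v=\A^{\R}\tilde v$ between the half-Laplacian on the circle and that of the periodic lift.

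\emph{The kernel inequality.} The heart of the matter is
\[
K_{ml}(x,y)\le K_l(l/2,y)\qquad\text{for }x\in S^1_{ml}\setminus S^1_l,\ y\in S^1_l,
\]
together with its $m=\infty$ analogue $|x-y|^{-2}\le K_l(l/2,y)$. Splitting the lattice $l\Z$ into the $m$ cosets of $ml\Z$ yields the folding identity $K_l(l/2,y)=\sum_{i=0}^{m-1}K_{ml}(l/2-il,y)$, so, all summands being positive, it is enough to produce one $i\in\{0,\dots,m-1\}$ with $K_{ml}(x,y)\le K_{ml}(l/2-il,y)$. As $K_{ml}(a,b)$ is a nonincreasing function of the geodesic distance $\dist_{ml}(a,b)$ on $S^1_{ml}$ (immediate from the closed form), this amounts to $\dist_{ml}(x,y)\ge\dist_{ml}(l/2-il,y)$ for a suitable $i$. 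Lifting $x$ to $(l/2,\,ml-l/2]$ and $y$ to $(-l/2,l/2]$, one finds $\dist_{ml}(x,y)=\min(x-y,\,ml-x+y)\ge\min(l/2-y,\,l/2+y)$, whereas $i=0$ and $i=1$ (both available since $m\ge2$) give $\dist_{ml}(l/2,y)=l/2-y$ and $\dist_{ml}(-l/2,y)=l/2+y$; choosing the smaller one settles the inequality. The case $m=\infty$ is simpler still: $K_l(l/2,y)=\pi^2/\big(l^2\cos^2(\pi y/l)\big)$ and $\cos(\pi y/l)=\sin\!\big(\tfrac{\pi}{l}\dist(y,\{\pm l/2\})\big)\le\tfrac{\pi}{l}\dist(y,\{\pm l/2\})\le\tfrac{\pi}{l}|x-y|$.

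\emph{Conclusion in both cases.} If $x\notin S^1_l$ then $u_{l,ml}\equiv\mu$ there, so that part of the torus contributes nothing to $\A^{S^1_{ml}}u_{l,ml}(x)$ and $\A^{S^1_{ml}}u_{l,ml}(x)=\int_{S^1_l}\big(u(y)-\mu\big)K_{ml}(x,y)\dy$, while $\A^{S^1_l}u(l/2)=\int_{S^1_l}\big(u(y)-\mu\big)K_l(l/2,y)\dy$; subtracting, the integrand $\big(u(y)-\mu\big)\big(K_{ml}(x,y)-K_l(l/2,y)\big)$ is a product of two nonpositive factors, so the difference is $\ge0$. If instead $x\in S^1_l$, pass to $\R$: let $U$ be the $ml$-periodic lift of $u_{l,ml}$ and $V$ the $l$-periodic lift of $u$; then $U\ge V$ on $\R$ with equality on $\bigcup_k\big((-l/2,l/2]+kml\big)$ — the lifts can differ only on the ``constant part'', where $U=\mu\ge V$. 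Since $U(x)=V(x)$ and $U'(x)=V'(x)$ for $x\in(-l/2,l/2]$ (using $u'(l/2)=0$ at the boundary point), subtracting the two principal values cancels the singular contributions near $y=x$ and gives $\A^{\R}U(x)-\A^{\R}V(x)=\int_{\R}(U-V)(y)\,|x-y|^{-2}\dy\ge0$, an absolutely convergent integral of a nonnegative function ($U-V$ vanishing to second order at $y=x$). In both cases the $m=\infty$ version is the same computation with $|x-y|^{-2}$ in place of $K_{ml}(x,\cdot)$ (and with $U=u_{l,ml}$).

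\emph{Main obstacle.} The only step that is not mere bookkeeping is the kernel inequality; the delicate point there is the regime in which $x\to\pm l/2$ from outside $S^1_l$ while $y$ tends to the same endpoint of $S^1_l$, where both kernels blow up. The distance computation above shows the comparison survives this regime (with room to spare), but it is the place I would check most carefully.
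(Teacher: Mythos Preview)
Your proof is correct and follows the same strategy as the paper's, which argues in one sentence each: lift $u$ periodically to $S^1_{ml}$ and note that replacing all periods but one by the constant maximum can only raise the singular integral; and on the constant plateau, the downward pull from the single remaining ``dip'' is weaker than at the antipodal point on $S^1_l$ because the contributing set is smaller and farther away. Your folding identity $K_l(l/2,y)=\sum_{i=0}^{m-1}K_{ml}(l/2-il,y)$ together with the geodesic-distance comparison is precisely the rigorous content behind the paper's informal ``smaller set, farther away'' claim, and your $C^{1,1}$ observation justifies the pointwise principal values that the paper takes for granted.
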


\begin{proof}
We can consider $u$ as a function on $S^1_{ml}$ with period $l$ and the fractional Laplacians of these functions agree. The singular integral clearly becomes larger when we replace all the periods but one with a constant function at the maximum. On the constant segment of the new function, the part of the singular integral pulling downward is no larger than before on the circle at the highest point, since there is a smaller set contributing to the integral pulling down, and it is farther away.
\end{proof}

We formulated the Lemma in the setting of smooth functions to compute the singular integral directly, but by density it also holds in the distributional sense for $u\in H^{1/2}(S^1_l)$, thus in particular
\[
\A^{S^1_{ml}}\ubar_{l,ml} - W'(\ubar_{l,ml}) \geq \frac1l
\]
on $\{\ubar_{l,ml}>0\} = S_{ml}^1\setminus B_R(x_0)$. 

The same methods as in Lemma \ref{lemma obstacle sub-solution} can be used to establish the following result for a global minimiser without the term forcing downwards.

\begin{lemma}\label{lemma subsolution line}
Let $u\in 1+ H^{1/2}(\R)$ be a minimiser of 
\[
\E(u) = [u]_{1/2}^2 + \int_\R W(u)\dx
\]
under the constraint $u\equiv 0$ on $[-R,R]$. Then $u$ satisfies $1 - \frac{c}{|x|^2} \leq u(x)$ for all $x\in \R$ and some $c\geq 1$.
\end{lemma}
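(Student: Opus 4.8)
The plan is to specialise the construction from the proof of Lemma~\ref{lemma obstacle sub-solution} to the degenerate regime $l=\infty$, $M=0$, where things are actually simpler because the relevant kernel of the $H^{1/2}$-seminorm is exactly $|x-y|^{-2}$ rather than the circle kernel $K_l$ (recall $K_l(x-y)\ge|x-y|^{-2}$, so the estimates carried out there dominate the ones needed here). First I would normalise the minimiser: replace $W$ by the modified double-well potential used in Lemma~\ref{lemma obstacle sub-solution}, agreeing with $W$ on $[0,1]$ and growing quadratically outside; truncating $u$ from below at $0$ and from above at $1$ does not increase $\E$ and respects the constraint, so $0\le u\le 1$ and one may revert to the original $W$. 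Next I would symmetrise: rearranging so that the sublevel sets of $u$ become intervals centred at the origin (equivalently, decreasingly rearranging $1-u$) leaves $\int_\R W(u)$ and the side condition invariant and does not increase $[u]_{1/2}^2$, so the strict form of the rearrangement inequality (\cite{baernstein1994unified}, \cite{MR1657316}) forces a minimiser to coincide with a translate of its rearrangement, the translation being confined to a bounded window by the constraint $u\equiv 0$ on $[-R,R]$. It therefore suffices to prove the estimate for the even minimiser that is monotonically increasing on $[0,\infty)$; the bound transfers to translates at the harmless cost of enlarging $c$, which may depend on $u$. For this $u$ the vanishing set is a bounded interval $[-a,a]\supseteq[-R,R]$, and since $u-1\in H^{1/2}(\R)\subseteq L^2(\R)$ is monotone on $[0,\infty)$ we get $u(x)\to 1$ as $|x|\to\infty$, hence $\{u<1-\beta\}$ is a bounded interval for each $\beta>0$. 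The Euler--Lagrange equation $\A u=W'(u)$ holds weakly on $\{u>0\}$, the right-hand side being bounded, so the regularity theory quoted in Lemma~\ref{lemma obstacle sub-solution} (\cite{MR3161511}, \cite{ros2014dirichlet}) gives $u\in C^{0,1/2}(\R)\cap C^\infty(\{u>0\})$.

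\emph{The barrier.} I would fix $\beta\in(0,\tfrac1{16}]$ so small that $-W'(1-t)\ge\tfrac{W''(1)}2\,t$ for $t\in[0,\beta]$ and $W'$ is strictly increasing on $[1-\beta,1]$, and set $w(x)=\bigl(1-\tfrac{c}{|x|^2}\bigr)_+$ for a large constant $c\ge 1$ to be chosen. The crucial estimate $\A w(x)\ge-\tfrac{W''(1)\,c}{2\,|x|^2}$ for $|x|\ge 4\sqrt c$ is precisely the ``Improved Growth'' computation in the proof of Lemma~\ref{lemma obstacle sub-solution} with the kernel taken to be $|x-y|^{-2}$: split the singular integral as in \eqref{eq fracoperator form} into a near-field piece controlled by $w''$, an intermediate piece controlled by $w'$, a far piece bounded by using that the jump of $w$ is at most $1$, and the nonlocal tail; each piece is $O\bigl(\sqrt c\,|x|^{-2}\bigr)$ (up to lower-order terms) and is absorbed into $\tfrac{W''(1)c}{2|x|^2}$ once $c$ is large. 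It follows that $\A w-W'(w)\ge0$ wherever $w\ge1-\beta$, since there $|x|\ge\sqrt{c/\beta}\ge 4\sqrt c$ so the estimate applies, while $-W'(w)=-W'\bigl(1-\tfrac c{|x|^2}\bigr)\ge\tfrac{W''(1)}2\tfrac c{|x|^2}$ because $\tfrac c{|x|^2}\le\beta$. Enlarging $c$ once more I would also arrange $[-\sqrt c,\sqrt c]\supseteq\{u<1-\beta\}$, so that $w\equiv 0$ on $\{u<1-\beta\}$.

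\emph{The comparison.} Now $w-u\in C^0(\R)$ with $w-u\to 0$ at $\pm\infty$. If $w-u\le 0$ everywhere we are done, as $u(x)\ge w(x)\ge 1-c|x|^{-2}$. Otherwise $w-u$ attains a positive maximum at some $x_\ast$; then $w(x_\ast)>u(x_\ast)\ge 0$ forces $|x_\ast|>\sqrt c$, hence $x_\ast\notin\{u<1-\beta\}\subseteq[-\sqrt c,\sqrt c]$, so $1\ge w(x_\ast)>u(x_\ast)\ge 1-\beta$, both values lying in the range where $W'$ increases. At $x_\ast$ both $w$ and $u$ are smooth (both are positive there), so $\A(w-u)(x_\ast)$ is defined pointwise via \eqref{eq fracoperator form} and is $\le 0$ at the maximum; but $\A w(x_\ast)-W'(w(x_\ast))\ge 0$ while $\A u(x_\ast)-W'(u(x_\ast))=0$, whence $\A(w-u)(x_\ast)\ge W'(w(x_\ast))-W'(u(x_\ast))>0$, a contradiction. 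Thus $u\ge w$ everywhere, which gives the claim after replacing $c$ by $\max\{c,1\}$.

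\emph{Main obstacle.} The only genuinely nontrivial ingredient is the lower bound on $\A w$ for the algebraically decaying barrier, and this has already been established, in greater generality, in Lemma~\ref{lemma obstacle sub-solution}. What remains is bookkeeping: one must choose $\beta$ first, then $c$ large enough to simultaneously make the $\A w$ estimate effective, to swallow the (now known) boundedness of $\{u<1-\beta\}$, and to push any positive maximum of $w-u$ into the region $\{w\ge 1-\beta\}$ where the equation and the sub-solution inequality can legitimately be compared through the maximum principle.
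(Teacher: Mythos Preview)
Your proposal is correct and follows precisely the route the paper intends: the paper gives no separate proof but simply states that the same methods as in Lemma~\ref{lemma obstacle sub-solution} apply, and you have accurately specialised that argument to the whole-line case with $M=0$, including the truncation to $[0,1]$, symmetric rearrangement, Euler--Lagrange equation with regularity, the barrier $w(x)=(1-c|x|^{-2})_+$ with the ``Improved Growth'' estimate on $\A w$, and the maximum-principle comparison. The only point worth flagging is that your reduction to the symmetric case via strict rearrangement, with the translation absorbed into $c$, is slightly more explicit than anything in the paper; it is correct since the half-length $a$ of the zero set of the symmetric minimiser is controlled by $\|u-1\|_{L^2}^2$ and the lemma allows $c$ to depend on the minimiser.
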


\subsection{The Interface} \label{section interface}

Recall the following results for transitions between potential wells.

\begin{lemma}\cite{cabre2005layer}\label{lemma interface}
There exists a function $\phi \in C^{2,\alpha}(\R)$ such that $\phi$ is monotonically increasing,
\[
\lim_{x\to\infty}\phi(x) = 1, \qquad \lim_{x\to-\infty}\phi(x) = 0, \qquad \A \phi - W'(\phi) = 0.
\]
The function satisfies
\[
\frac{c}{1+x^2} < \phi'(x) \leq \frac{C}{1+x^2}
\]
for some $C\geq c>0$.
\end{lemma}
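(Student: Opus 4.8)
The plan is to realise $\phi$ as the trace on $\{y=0\}$ of a minimiser of the Caffarelli--Silvestre extended energy and to read off the sharp two-sided decay of $\phi'$ from the heavy $|x|^{-2}$ tail of the kernel of $\A$, in the spirit of the heuristics in Section~\ref{section heuristic}. (Equivalently one can minimise the nonlocal energy $\E(u)=[u]_{1/2}^2+\int_\R W(u)$ on a growing interval with nonlocal Dirichlet data $0$ to the left and $1$ to the right; the extension picture is only marginally more convenient for the monotonicity and regularity steps.) Write $\R^2_+=\R\times(0,\infty)$. Since $\A=-(-\Delta)^{1/2}$, the equation $\A\phi=W'(\phi)$ is equivalent to $\Delta U=0$ in $\R^2_+$, $\partial_yU(\cdot,0)=W'(U(\cdot,0))$, with $\phi=U(\cdot,0)$ and $U$ the bounded harmonic extension. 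Fix $R\gg1$, a smooth monotone profile $g$ on the curved part of $\partial B_R^+$ running from $0$ near $x=-R$ to $1$ near $x=R$, and minimise
\[
\E_R(U)=\tfrac12\int_{B_R^+}|\nabla U|^2\dx\dy+\int_{(-R,R)\times\{0\}}W(U)\dx
\]
over the weakly closed, coercive set of $U\in H^1(B_R^+)$ that are nondecreasing in $x$ and equal $g$ on the curved boundary. A minimiser $U_R$ exists; truncation between $0$ and $1$ only lowers $\E_R$, so $0\le U_R\le1$. Differentiating the obstacle ``$\partial_xU_R\ge0$'', the function $\partial_xU_R$ solves the linear problem $\Delta(\partial_xU_R)=0$, $\partial_y(\partial_xU_R)=W''(U_R)\,\partial_xU_R$ on $\{y=0\}$, so by the Hopf lemma it is either $\equiv0$ or strictly positive; hence the obstacle is inactive in the interior and $U_R$ solves the unconstrained Euler--Lagrange system (equivalently, minimise without the monotonicity constraint and recover monotonicity a posteriori by the sliding method). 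Normalising the free horizontal translation by $U_R(0,0)=\tfrac12$ and invoking up-to-the-boundary $C^{2,\alpha}$ estimates (legitimate since $\|U_R\|_\infty\le1$ and $W'\in C^{1,\alpha}$), a subsequence converges in $C^2_{loc}(\ol{\R^2_+})$ to a bounded harmonic $U$ with $\partial_yU(\cdot,0)=W'(U(\cdot,0))$, $0\le U\le1$, $\partial_xU\ge0$, $U(0,0)=\tfrac12$. Put $\phi:=U(\cdot,0)$, so $\A\phi=W'(\phi)$ and $\phi'\ge0$.

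\emph{Limits at infinity and regularity.} Monotonicity gives $\phi(\pm\infty)=a_\pm\in[0,1]$ with $a_-\le\tfrac12\le a_+$. Translating $U$ by $x_k\to+\infty$ and passing to the $C^2_{loc}$-limit yields a bounded harmonic function on $\R^2_+$ independent of $x$, hence affine in $y$, hence $\equiv a_+$; the Neumann condition then forces $W'(a_+)=0$, and likewise $W'(a_-)=0$. A standard energy-comparison argument (a minimising layer does not waste energy climbing over an intermediate well, so it connects neighbouring wells) together with $U(0,0)=\tfrac12$ identifies $a_-=0$ and $a_+=1$; in particular $\phi$ is nonconstant, and the Hopf lemma upgrades $\phi'\ge0$ to $\phi'>0$. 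Finally the fractional Schauder bootstrap $\phi\in C^\alpha\Ra W'(\phi)\in C^\alpha\Ra\phi\in C^{1,\alpha}\Ra\cdots\Ra\phi\in C^{2,\alpha}$ gives the stated regularity.

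\emph{The decay $\phi'\asymp(1+x^2)^{-1}$.} By the reflection symmetry it suffices to look at $x\to+\infty$. The key structural fact is that the harmonic extension of the Poisson kernel gives the identity $\A\big[\lambda(\lambda^2+x^2)^{-1}\big]=(x^2-\lambda^2)(\lambda^2+x^2)^{-2}$, whose right-hand side has \emph{the same} $|x|^{-2}$ tail as the function itself. Writing $w:=1-\phi>0$ for $x$ large and using $W'(1)=0$, $W''(1)>0$ to get $\tfrac12W''(1)\,w\le\A w(x)=-W'(\phi(x))\le2W''(1)\,w$ beyond some $x_0$, one traps $w$ between barriers $C_\pm\lambda_\pm(\lambda_\pm^2+x^2)^{-1}$ (choosing $\lambda_\pm$ so that $\A v\lessgtr c\,v$ for $x$ large and $C_\pm$ so that these dominate, resp.\ are dominated by, $w$ on $\{|x|\le x_0\}$, where $0\le w\le1$), and the comparison principle for $\A-c$ yields $c_1x^{-2}\le w(x)\le c_2x^{-2}$; the analogous argument at $-\infty$ gives $c_1x^{-2}\le\phi(x)\le c_2x^{-2}$ there. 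To transfer this to $\psi:=\phi'$, note $\psi>0$ solves the linear equation $\A\psi=W''(\phi)\,\psi$, is bounded, and has $\int_\R\psi=1$; isolating in the principal value the contribution of the bulk of $\psi$ (which lives near the origin) one obtains $\A\psi(x)\sim x^{-2}\int_\R\psi=x^{-2}$ as $x\to+\infty$ up to lower-order terms controlled by the already-proved bound $\psi=O(x^{-2})$ and interior $C^2$-estimates, and since $W''(\phi(x))\to W''(1)>0$ this gives $\psi(x)\asymp x^{-2}$. Near $x=0$ both inequalities are immediate from $\phi'>0$ and $\phi'\in C^\alpha$, whence $c(1+x^2)^{-1}<\phi'(x)\le C(1+x^2)^{-1}$ on all of $\R$.

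\emph{Main obstacle.} Existence, monotonicity, the limits and the regularity are routine; the real content is the matching \emph{algebraic} two-sided bound on $\phi'$. It encodes the fundamentally different behaviour of minimal interfaces for $\A=-(-\Delta)^{1/2}$, whose kernel has heavy $|x|^{-2}$ tails, compared with the exponential decay in the local Allen--Cahn case; the delicate points are the precise choice of the Poisson-kernel barriers for the upper bound and the control of the error terms in the nonlocal integral for the lower bound. These estimates are precisely the substance of \cite{cabre2005layer}.
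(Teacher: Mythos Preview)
The paper does not give its own proof of this lemma --- it is stated with the citation \cite{cabre2005layer} and used as a black box --- so there is no ``paper's proof'' to compare against. Your overall architecture (harmonic extension, minimisation on half-balls, sliding/Hopf for monotonicity, Schauder bootstrap) is the Cabr\'e--Sol\`a-Morales route and is fine. The decay argument, however, has a genuine error.

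You claim that $w:=1-\phi$ satisfies $c_1x^{-2}\le w(x)\le c_2x^{-2}$ for large $x$, obtained by trapping $w$ between Poisson-kernel barriers $C_\pm\lambda_\pm(\lambda_\pm^2+x^2)^{-1}$. This is false: the correct decay is $w(x)\asymp x^{-1}$, as the paper notes immediately after the lemma (and as is sharpened to $w(x)=\tfrac{1}{W''(0)x}+O(x^{-2})$ in \cite{gonzalez2010slow}). The reason your comparison fails is that $w$ and your barrier have incompatible behaviour at $-\infty$: $w(-\infty)=1$ while the Poisson kernel tends to $0$ there, so no multiple of the barrier can dominate $w$ on the complementary set $\{x\le x_0\}$ as required for the nonlocal maximum principle. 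The same objection kills the analogous claim $\phi(x)\asymp x^{-2}$ at $-\infty$.

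This error then propagates: your argument for $\psi=\phi'$ invokes ``the already-proved bound $\psi=O(x^{-2})$'' to control the remainder terms, but nothing of the sort has been shown --- the preceding step concerned $w=1-\phi$, not $\psi$, and was in any case wrong. The decay argument for $\psi$ is therefore circular. The fix is to run the barrier comparison \emph{directly on $\psi=\phi'$}, which genuinely vanishes at both infinities and solves the linear equation $\A\psi=W''(\phi)\psi$; the appropriate barriers are the derivatives $\phi_\alpha'(x)=\tfrac{2\alpha}{\pi(\alpha^2+x^2)}$ of the explicit layers for $W_\alpha(u)=\tfrac{1}{\pi\alpha}(1+\cos\pi u)$, after splitting $W''(\phi)$ into a strictly positive part and a compactly supported remainder. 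This is precisely the mechanism the paper sketches later in Section~\ref{section corrector} for the corrector $\psi$, and it is the argument of \cite[Theorem~1.6]{cabre2005layer}.
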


The estimate on the derivative further implies that
\[
\phi(x) = 1-\int_x^\infty \phi'(t)\dt \leq 1- \int_x^\infty \frac{C}{1+t^2}\dt \leq 1- \frac{C'}x
\]
for any $C'>C$ and sufficiently large $x$, as well as $\phi(x) \geq 1- \frac{c}{x}$ for all sufficiently large $x$. This has been sharpened to the estimate
\[
\left|1- \frac{1}{W''(0)\,x} - \phi(x)\right| = O(x^{-2})
\]
in \cite[Theorem 3.1]{gonzalez2010slow}. The same decay holds for large negative $x$:
\[
\left|\phi(x) - \frac{1}{W''(0)\,|x|}\right| = O(|x|^{-2}).
\]
Note that the constant in \cite{gonzalez2010slow} is slightly different since the operator used there is the half-Laplacian in its usual normalisation, while we neglected a dimensional constant for easier notation. Under additional conditions, we can also control the second derivative of $\phi$. Note that for the popular choice 
\[
W(u) = \frac1\pi \,\left[\cos(\pi u) + 1\right]
\]
(with wells on $\Z+ 1/2$) we have the transition function
\[
\phi(x) = \frac2\pi\,\arctan(x), \qquad\phi'(x) = \frac{2/\pi}{1+x^2}, \qquad \phi''(x) = \frac{4x}{\pi\,(1+x^2)^2}
\]
so that also 
\begin{equation}\label{eq: decay interface}
|\phi''|\leq C/(1+x^2)^{3/2}.
\end{equation}
In the following, we assume that $W$ is chosen such that the optimal transition function $\phi = \phi_W$ satisfies \eqref{eq: decay interface}.

\begin{lemma}\label{lemma modified interface}
Let $L\gg 1$, $\beta$ as in Lemma \ref{lemma obstacle sub-solution}. Then there exists function $\widetilde\phi = \widetilde\phi_L\in C^{2,\alpha}(\R)$ such that
\begin{enumerate}
\item $\widetilde\phi$ is monotone increasing,
\item $\widetilde\phi\equiv 0$ on $(-\infty, -L/W''(0) + \tilde C)$,
\item $\widetilde\phi\equiv 1 - \frac1L$ on $[L/W''(0) + \tilde C,\infty)$,
\item whenever $0<\widetilde\phi(x) < \beta$ or $1-\beta < \widetilde\phi(x) \leq 1$ we have
\[
\left(\A\widetilde\phi - W'(\widetilde\phi)\right)(x) \geq \frac{\bar c}{L^2}
\]
\item whenever $\beta \leq \widetilde\phi(x) \leq 1-\beta$, we have
\[
\left|\A\widetilde\phi - W'(\widetilde\phi)\right|(x) \leq \frac{C}{L^2}.
\]
\end{enumerate}
The constants $\bar c, C, \tilde C$ depend on $W$, but not on $L$.
\end{lemma}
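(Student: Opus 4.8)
The plan is to build $\widetilde\phi_L$ by rescaling and truncating the standard transition profile $\phi$ from Lemma \ref{lemma interface}, then correcting it near the truncation points so that the sub-solution inequality holds on the flat (near-$0$ and near-$1-1/L$) regions. Concretely, I would start from $\phi$ and consider the function $\phi(x) - \tfrac1L$ shifted so that its value passes through $0$ at a point $x_-$ and through $1 - \tfrac1L$ at a point $x_+$. Since by the Gonz\'alez--Monneau estimate $\phi(x) = \tfrac1{W''(0)|x|} + O(|x|^{-2})$ as $x\to -\infty$ and $\phi(x) = 1 - \tfrac1{W''(0)x} + O(x^{-2})$ as $x\to +\infty$, one has $\phi(x_-) = \tfrac1L$ for $x_- \approx -L/W''(0)$ and $1 - \phi(x_+) = \tfrac1L$ for $x_+ \approx L/W''(0)$, which explains the stated cutoff locations $\pm L/W''(0) + \tilde C$. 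So the modified profile agrees with (a vertical shift/truncation of) $\phi$ on the ``middle'' region where $\beta \le \widetilde\phi \le 1-\beta$, and is set identically to $0$ for $x$ below $x_-$ and identically to $1 - \tfrac1L$ for $x$ above $x_+$; near $x_\pm$ one interpolates in a $C^{2,\alpha}$, monotone way on an interval of fixed length $\tilde C$.

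The second step is to verify the three inequalities. On the middle region (item (5)): there $\widetilde\phi$ differs from an exact solution $\phi$ of $\A\phi - W'(\phi) = 0$ only by the constant vertical shift $-\tfrac1L$ (away from the glued intervals, which are at $|x| \gtrsim L$ and hence contribute $O(L^{-2})$ through the tails of the kernel) and by the truncation far away. Writing $\widetilde\phi = \phi - \tfrac1L + (\text{corrections supported at } |x|\gtrsim L)$ and using $W'(\phi - \tfrac1L) = W'(\phi) - \tfrac1L W''(\phi) + O(L^{-2})$ together with $\A(\tfrac1L) = 0$, one gets $|\A\widetilde\phi - W'(\widetilde\phi)| \le C/L^2$ on the middle region, since $W''$ is bounded there and the non-local effect of replacing $\phi$'s tails by constants $0$ and $1-\tfrac1L$ at distance $\gtrsim L$ is $O(L^{-2})$ by \eqref{eq fracoperator form} (the kernel integrated over $\{|y-x|>L\}$ gives $O(L^{-1})$ times a jump of size $O(L^{-1})$). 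For the near-well regions (item (4)): when $0 < \widetilde\phi(x) < \beta$ we are slightly to the right of $x_-$, where $\widetilde\phi$ is increasing from $0$; here $W'(\widetilde\phi) \le 0$ (since $W' \le 0$ just above $0$ by $W(0)=0$, $W''(0)>0$, and $\beta$ small, using $-W'(t) \ge \tfrac{W''(0)}2 t$ for $t\in[0,\beta]$ after possibly shrinking $\beta$), so $\A\widetilde\phi - W'(\widetilde\phi) \ge \A\widetilde\phi + \tfrac{W''(0)}2\widetilde\phi$; and $\A\widetilde\phi(x) \ge -C/L^2$ because to the left of $x$ the function is $0$ over a set of measure $\gtrsim L$ at distance $O(1)$ — more carefully, $\A\widetilde\phi(x)$ equals the (positive) contribution from the region where $\widetilde\phi > \widetilde\phi(x)$ minus the contribution from where it is smaller, and the downward part is controlled since $\widetilde\phi \equiv 0$ only on $(-\infty, x_-)$ which lies at distance $\ge 0$ but the relevant bound is that the shift makes $\widetilde\phi$ comparable to $\phi$ translated, for which $\A\phi - W'(\phi) = 0$ exactly. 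The symmetric argument near $1 - \tfrac1L$ uses $-W'(1-t) \ge \tfrac{W''(1)}2 t$ and the estimate $\phi(x) \ge 1 - c/x$, plus the fact that the right-hand plateau sits at height $1 - \tfrac1L$ rather than $1$, which produces exactly the $+\bar c/L^2$ slack: $W'(1 - \tfrac1L) = -\tfrac1L W''(1) + O(L^{-2}) \le -\tfrac{\bar c}{L}$, wait — this is $O(L^{-1})$, not $O(L^{-2})$; I would instead note that on the plateau $\widetilde\phi$ is constant so $\A\widetilde\phi \ge 0$ up to the $O(L^{-2})$ kernel-tail error and $-W'(1-\tfrac1L) \ge \tfrac{\bar c}{L} \ge \tfrac{\bar c}{L^2}$, which suffices (the inequality in (4) is a lower bound, so a larger positive quantity is fine), and on the short transition interval near $x_+$ the $C^2$-bounded interpolation gives $\A\widetilde\phi$ bounded and $W'$ small, and monotonicity of the gluing can be arranged so the sign is right.

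The main obstacle I anticipate is the bookkeeping in the gluing near $x_\pm$: one must interpolate between $\phi$ (or its shift) and the constant in a way that is simultaneously $C^{2,\alpha}$, monotone, and does not spoil either the $O(L^{-2})$ two-sided bound on the middle region or the one-sided $\bar c/L^2$ bound on the near-well regions. The delicate point is that the non-local operator $\A$ couples the transition interval to the whole line, so the correction introduced at $x_+$ affects $\A\widetilde\phi$ at points in the middle region and near $x_-$; one needs the derivative and second-derivative bounds $|\widetilde\phi'|, |\widetilde\phi''| \le C/(1+x^2)^{3/2}$-type control (available for $\phi$ by \eqref{eq: decay interface}) to propagate, so that the glued profile still has $L^\infty$-controlled $\A\widetilde\phi$ with the right decay. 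A clean way to organize this is: fix once and for all a model monotone $C^\infty$ cutoff profile on an interval of unit length, rescale nothing (the gluing length stays $\tilde C = O(1)$), and absorb all resulting $O(1)$ errors localized at distance $\gtrsim L/W''(0)$ from any middle-region point into the $C/L^2$ budget via the tail estimate $\int_{|y|>L} |y|^{-2}\dy = O(L^{-1})$ combined with the jump size $O(L^{-1})$. Everything else — monotonicity, the $C^{2,\alpha}$ regularity, and the explicit plateau values in items (2) and (3) — then follows directly from the construction and from Lemma \ref{lemma interface}.
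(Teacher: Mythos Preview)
Your construction has a genuine gap in the middle-region estimate (item (5)). You propose $\widetilde\phi = \phi - \tfrac1L$ there, and then compute
\[
\A\widetilde\phi - W'(\widetilde\phi) = \A\phi - W'(\phi) + \tfrac1L\,W''(\phi) + O(L^{-2}) = \tfrac1L\,W''(\phi) + O(L^{-2}).
\]
You then say ``since $W''$ is bounded there'' and conclude the bound $C/L^2$, but boundedness of $W''$ only gives $O(L^{-1})$, not $O(L^{-2})$. In the middle region $\beta\le\widetilde\phi\le 1-\beta$ the second derivative $W''(\phi)$ is generically nonzero (for $W(z)=1-\cos(2\pi z)$ it is negative at $z=1/2$), so a vertical shift of size $1/L$ is simply too large: it produces an unavoidable $O(1/L)$ error in $\A\widetilde\phi - W'(\widetilde\phi)$, which violates (5) and would later force the sub-solution interface to move at speed $O(l^{-1})$ instead of $O(l^{-2})$.

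The paper's fix is to compose on the \emph{range} rather than shift and truncate on the domain: set $\widetilde\phi = f_L\circ\phi$ where $f_L$ is smooth with $f_L(z)=0$ for $z\le 1/L$, $f_L(z)=z-\tfrac{C}{L^2}$ for $2/L\le z\le 1-2/L$, and $f_L(z)=1-1/L$ for $z\ge 1-1/L$, subject to $0\le f_L'\le 3$ and $|f_L''|\le 10/L$. The point is that in the middle region the vertical offset is only $C/L^2$, so the potential error $W'(\phi)-W'(\widetilde\phi)$ is already $O(L^{-2})$; the larger deviation $|\widetilde\phi-\phi|\sim 1/L$ occurs only near the wells, where it helps rather than hurts because $W'$ is monotone there. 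The chain rule then gives $|(\widetilde\phi-\phi)'|\le C/L^2$ and $|(\widetilde\phi-\phi)''|\le C/L^3$ globally, from which $|\A(\widetilde\phi-\phi)|\le C/L^2$ follows by splitting the singular integral at scale $L$. This single composition handles monotonicity, regularity, and all the estimates at once, and avoids your separate domain-gluing step entirely.
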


\begin{proof}
Take $f_L:\R\to\R$ to be a smooth function such that
\[
f_L(z) = \begin{cases}0 & z \leq \frac1L\\
			 z - \frac{C}{L^2} &\frac2L \leq z \leq 1 - \frac2L\\
			 1 - \frac1L & z\geq 1-\frac1L
	 	\end{cases}
\]
and $0 \leq f_L' \leq 3$, $|f_L''|\leq \frac{10}{L}$ and define
\[
\widetilde\phi = f_L\circ \phi.
\]
We see that 
\[
\widetilde\phi' = (f_L'\circ\phi)\,\phi'\geq 0,
\]
so $\widetilde\phi$ is monotone increasing. Furthermore, we obtain that
\[
\phi(x) \geq 1 - \frac{1}{W''(0)\,x} - \frac{C}{x^2}  \geq 1- \frac1L \qquad\forall\ x\geq L/W''(0) + C
\]
and thus $\widetilde\phi(x) \equiv 1- \frac1L$ for all $x\geq L/W''(0) + C$. Analogously, $\widetilde\phi(x) \equiv 0$ for all $x\leq - (L/W''(0)+C)$. Now compute
\begin{align*}
(\widetilde\phi - \phi)' &= [(f_L'\circ\phi) - 1]\,\phi'\\
(\widetilde\phi - \phi)'' &= [(f_L'\circ\phi) - 1]\,\phi'' + (f_L''\circ\phi)\,(\phi')^2. 
\end{align*}
Thus it is easy to see that
\[
\left|\widetilde\phi-\phi\right|\leq \frac{2}L, \qquad \left|\widetilde\phi'-\phi'\right| \leq \frac{C}{L^2}, \qquad \left|\widetilde\phi'' - \phi''\right|\leq \frac{C}{L^3}.
\]
When we abbreviate $w= \widetilde\phi-\phi$, we can therefore use a representation of $\A$ like \eqref{eq fracoperator form} to compute
\begin{align*}
\A(\widetilde\phi-\phi)(x) &= \int_{x-L}^{x+L}\frac{\int_x^y(y-t)w''(t)\dt}{|y-x|^2}\dy +\int_{\R\setminus[x-L,x+L]}\frac{w(y)-w(x)}{|y-x|^2}\dy\\
	&\leq \int_{x-L}^{x+L} ||w''||_{L^\infty}\dy + 2\int_L^\infty \frac{||w||_\infty}{y^2}\dy\\
	&\leq \frac{C}{L^2}.
\end{align*}
Finally, take $x$ such that $\phi(x)\geq 1-\beta$ or $0<\phi_L(x) \leq \beta$. Then we can use that $\tilde \phi \leq \phi - \frac{C}{L^2}$ or $\wt\phi =0$ to estimate
\begin{align*}
-W'(\widetilde\phi(x)) &= -W'(\widetilde\phi(x) - \phi(x) + \phi(x))\\
	&\geq - W'(\phi(x)) - \frac{2\,W''(\phi(x))}3\, (\widetilde\phi(x) - \phi(x))\\
	&\geq - W'(\phi(x)) - \frac{W''(1)}2\,\frac{C}{L^2}.
\end{align*}
Thus in total
\[
\A\widetilde\phi - W'(\widetilde\phi) \geq \frac{C}{L^2}
\]
if $\widetilde\phi(x)\notin[\beta, 1-\beta] \cup\{0\}$ and 
\[
\left|\A\widetilde\phi - W'(\widetilde\phi)\right|\leq \frac{C}{L^2}.
\]
\end{proof}

\subsection{Dynamics on the Real Line I}

We can apply the sub-solutions constructed above to the one-dimensional problem by glueing a sub-solution for periodic obstacles to that for an interface. First we describe our results on the real line for a single step and then for a kink/anti-kink pair. The first case cannot be achieved with finite energy, but it helps us identify the time scale on which the pinning constraint induces motion. Here, we need to use solutions to \eqref{eq gradient flow} in the viscosity sense since a single transition layer does not have finite energy.

\begin{theorem}[A single step]\label{theorem slowness line}
Let $x_{i,\eps} = i\,d_\eps$ for $d_\eps\gg \eps$. Then there exist $\ul u_\eps \leq \ol u_\eps$ which are a viscosity sub- and super-solution of
\begin{equation}\label{eq single step evolution}
\begin{pde}
c_\eps \eps \,u_t &= \frac{1}{|\log\eps|}\left(\A u - \frac1\eps\,W'(u)\right) &\text{in }\R\setminus \bigcup_{i\in \Z}\overline{B_\eps(id_\eps)}\\
	u& =0 &\text{on }\bigcup_{i\in \Z}\overline{B_\eps(id_\eps)}
\end{pde}
\end{equation}
respectively with the following property: When we choose $c_\eps = {}\frac\eps {d_\eps^2\,|\log\eps|}$, there are constants $c, C>0$ such that
\[
\lim_{\eps\to 0} \ul u_\eps(t, \cdot) = \chi_{[ct,\infty)}, \qquad \lim_{\eps\to 0}\ol u_\eps(t,\cdot) = \chi_{[Ct,\infty)}
\]
in $L^2_{loc}(\R)$ for all $t>0$.
\end{theorem}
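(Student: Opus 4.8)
The plan is to build $\ul u_\eps$ and $\ol u_\eps$ out of the ingredients from Lemmas~\ref{lemma obstacle sub-solution}, \ref{extension lemma} and~\ref{lemma modified interface}, after rescaling. Write $l_\eps = d_\eps/\eps$, so that after the change of variables $x\mapsto x/\eps$ the obstacles sit on $l_\eps\Z$ and the operator $\A-\frac1\eps W'$ becomes $\frac1\eps(\A - W')$ in the new variable. Applying Lemma~\ref{lemma obstacle sub-solution} on $S^1_{l_\eps}$ with $R=1$, some fixed $M$, and then extending periodically via Lemma~\ref{extension lemma} (the case $m=\infty$), produces a function $\bar v$ on $\R$, vanishing on $\bigcup_i B_1(il_\eps)$, with $\A\bar v - W'(\bar v)\ge M/l_\eps$ off the obstacles, and with the crucial decay $\bar v(x)\ge 1 - \frac{M}{W''(0)\,l_\eps} - \frac{c_2}{\dist(x,l_\eps\Z)^2}$ near each node. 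This is the stationary ``pinned'' building block. The moving front is supplied by $\widetilde\phi_L$ from Lemma~\ref{lemma modified interface} with $L\sim l_\eps$: it is monotone, flat at $0$ on the left and flat at $1-1/L$ on the right, and satisfies $\A\widetilde\phi - W'(\widetilde\phi) \ge \bar c/L^2$ in the two tail regions and $|\A\widetilde\phi - W'(\widetilde\phi)|\le C/L^2$ in the transition zone of width $O(L)$.

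Next I would \emph{glue}. For the sub-solution, translate $\widetilde\phi_L$ so its transition region is centred at a moving point $y(t)$ that travels at the slow speed $c$ and take $\ul v(t,x) = \min\{\widetilde\phi_L(x-y(t)),\ \bar v(x)\}$ (a minimum of sub-solutions of a translation-invariant operator is again a sub-solution in the viscosity sense, and the Dirichlet constraint $=0$ on the obstacles is inherited since $\bar v$ vanishes there). One checks the evolution inequality $c_\eps\eps\,\partial_t \ul v \le \frac1{|\log\eps|}(\A\ul v - \frac1\eps W'(\ul v))$: in the tail/transition regions of $\widetilde\phi_L$ the spatial operator contributes $\ge -C/(L^2)$ which after the $1/\eps$ rescaling of the potential and with $L\sim l_\eps = d_\eps/\eps$ is of order $\eps/d_\eps^2$, exactly matching $c_\eps\eps\cdot\partial_t = \frac{\eps}{d_\eps^2|\log\eps|}\cdot\eps\cdot O(\dot y/\eps)$ after the $|\log\eps|^{-1}$; this is where the scaling $c_\eps = \frac{\eps}{d_\eps^2|\log\eps|}$ and the constant $c$ are pinned down. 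Away from the transition, $\ul v$ locally equals $\bar v$ (where $\bar v$ wins) or equals the flat values $0$ or $1-1/L$ of $\widetilde\phi_L$, and in all these cases $\partial_t\ul v = 0$ while the right-hand side is $\ge 0$ by the sub-solution properties of $\bar v$ and $\widetilde\phi_L$ respectively — with a short argument at the ``corners'' where the two graphs cross, using that near such a crossing point $\widetilde\phi_L$ and $\bar v$ are both $\ge 1-\beta$, so the decay estimate on $\bar v$ makes the $\A$-term dominate the error from $\widetilde\phi_L$. The super-solution is built dually: $\ol v(t,x) = \max\{1 - \widetilde\phi_L(Ct - x)\text{-type front},\ 1\}$ — more precisely one uses a front that equals $1$ to the left of $Ct$ and decays to $0$ to the right, ignoring the obstacles entirely (the constant $1$ is a supersolution and dropping the Dirichlet constraint only helps for a supersolution), with a slightly larger speed $C$ absorbing the $+C/L^2$ sign in the transition region. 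Comparison (Lemma-type results on the fractional parabolic problem, cf.\ the appendix referenced in the paper) gives $\ul u_\eps \le u_\eps \le \ol u_\eps$ once it holds at $t=0$, which we arrange by choosing the initial fronts appropriately.

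Finally, undo the rescaling: the front of $\ul v$ sits at $x\approx y(t/\text{(rescaled time)})$, and tracking the time-change $t\mapsto c_\eps\eps\, t/|\log\eps|$-normalisation shows the transition point of $\ul u_\eps(t,\cdot)$ is at $ct + O(L\eps) = ct + O(d_\eps)\to ct$, while the transition width $O(L\eps)=O(d_\eps)\to 0$; likewise for $\ol u_\eps$ with speed $C$. Since $0\le \widetilde\phi_L\le 1$ and $\widetilde\phi_L$ is monotone with shrinking transition layer, $\ul u_\eps(t,\cdot)\to \chi_{[ct,\infty)}$ and $\ol u_\eps(t,\cdot)\to\chi_{[Ct,\infty)}$ in $L^2_{loc}(\R)$ for every $t>0$, which is the claim. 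The main obstacle, as flagged in the paper's own discussion, is the gluing step: one must verify the viscosity sub-solution inequality \emph{uniformly in $\eps$} across the transition region, and in particular control the nonlocal operator $\A$ applied to the glued profile near the crossing points of $\widetilde\phi_L$ and $\bar v$ — this is exactly where the sharp $1/|x|^2$ decay of both building blocks (Lemmas~\ref{lemma obstacle sub-solution} and~\ref{lemma modified interface}) is indispensable, since a weaker decay would let the pinning error term $M/l_\eps$ overwhelm the front's driving term and break the sub-solution property.
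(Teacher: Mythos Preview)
Your construction has a sign error that breaks both the sub- and the super-solution. You write $\ul v = \min\{\widetilde\phi_L(\cdot - y(t)),\,\bar v\}$ and justify this by ``a minimum of sub-solutions is again a sub-solution''. This is the wrong direction: for the nonlocal operator, if $v=\min(f,g)$ and $v(x)=f(x)$, then $\A v(x)\le \A f(x)$ (since $v\le f$ everywhere with equality at $x$), so $\A v - W'(v) \le \A f - W'(f)$ and the inequality $v_t\le \A v - W'(v)$ does \emph{not} follow from the corresponding one for $f$. It is the \emph{maximum} of sub-solutions that is a sub-solution, and the \emph{minimum} of super-solutions that is a super-solution. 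The paper accordingly uses the $\min$ construction only for the super-solution, $\ol u_\eps = \min\{\phi((x-\alpha t)/\eps),\,\bar u_{l_\eps}(x/\eps)\}$. For the sub-solution the paper does something genuinely different: a piecewise-in-space gluing (the moving $\widetilde\phi$ on one side of a fixed spatial threshold, the stationary obstacle profile $\bar u_{l,Nl}$ on the other), which is continuous in space by matching the plateau values, together with discrete-in-time downward jumps whenever the moving front comes too close to the next obstacle. The sub-solution property at the jump times relies on the monotonicity of $\widetilde\phi$ so that the jump is pointwise non-increasing; between jumps one verifies the differential inequality pointwise and finds the required speed $\alpha = O(l^{-2})$. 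Your $\min$ ansatz does not reproduce this mechanism.

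Your super-solution is also off. You propose to ``ignore the obstacles entirely'', but then the front has no driving force: a single unmodified layer $\phi$ solves $\A\phi - W'(\phi)=0$ exactly and is stationary, so this would only give $C=0$. In the paper the super-solution \emph{must} feel the obstacles --- the $\min$ with $\bar u_{l_\eps}$ is precisely what produces the negative contribution $\A(\ol u_\eps - \phi)\le -\,2\eps\,\phi(\cdot)/(|\log\eps|\,d_\eps^2)$ from the nearest pinning site, and this term is what forces the front to move at positive speed on the $c_\eps$-timescale. The scaling computation you sketch for the sub-solution is morally right, but without the correct gluing and the obstacle-aware super-solution the argument does not close.
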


In one dimension, the natural obstacle scale is $d_\eps \sim 1/|\log\eps|$, so the gradient flow equation is slow on a scale of $O(\eps\,|\log\eps|)$. The interface moves with speed $O(1)$ if $d_\eps \sim \sqrt{\eps/|\log\eps|}$, which is the natural distance  between obstacles in two space dimensions; it is slow for larger distances and fast for smaller ones. It moves on the same time-scale as an interface would due to the kink/anti-kink attraction for $d_\eps\sim \sqrt\eps$. The proof also goes through for $d_\eps = N\eps$ for large enough $N$. 

The assumption that the obstacles are distributed on a lattice can of course be weakened significantly, and we believe that solutions $u_\eps$ should in fact converge to a characteristic function with a linearly propagating front $\chi_{[vt,\infty)}$ for periodic obstacles. We do not pursue these questions further.

\begin{proof}[Proof of Theorem \ref{theorem slowness line}]
{\bf Construction of a sub-solution.} For convenience, we build the sub-solution in the blow-up scale. Choose $l = l_\eps = d_\eps/(\eps N)$ for some sufficiently large $N\in\N$ (to be specified later), $R=1$ and $M=1$. We can extend the sub-solution for an obstacle $\ubar_l$ on the circle $S_l^1$ from Lemma \ref{lemma obstacle sub-solution} to the circle of length $Nl$ as in Lemma \ref{extension lemma}. 

Define $L$ by $1-\frac1L = \ubar_l(-x_0)$ and take $\widetilde\phi$ associated to $L$. By growth estimates for the optimal profile and the obstacle cell solution, $L = l + O(1)$. Furthermore, by this we know that $\widetilde\phi$ is constant on intervals $(-\infty, l/W''(0)+\tilde C]$ and $[l/W''(0)+\tilde C,\infty)$ for potentially larger constants $\tilde C$.  While $l$ and $L$ depend on $N$, the constants are uniform (at least for $l>l+0$ bounded from below), and we can take $N$ such that $N> 4C$. We will place an additional condition on $N$ later. 

Our sub-solutions are given by a modified transition $\wt\phi$ which moves with speed $\alpha$ in the space between two obstacles. Once we get too close to an obstacle, we jump over it instantaneously. In formulas
\[
\tilde u(t, x) = \begin{cases} \widetilde\phi\left( x - \frac{N}4l -\alpha t  \right) & x \leq \frac{3N}4l \\
	\ubar_{l,Nl}(x)&x\geq \frac{3N}{4l}
	\end{cases}
\] 
for $t\in [0, \frac{Nl}{4\alpha}]$ and
\[
\tilde u(t, x) = \tilde u\left(t - m\frac{Nl}{4\alpha}, \: x - mNl \right)\qquad\text{for } t\in \left( m\frac{Nl}{4\alpha}, (m+1)\frac{Nl}{4\alpha}\right], \quad m\in\N.
\]
 By construction, $u$ is continuous in space for all times $t$, jointly upper semi-continuous and non-increasing in time for a fixed point $x\in\R$. Since for fixed $x$ we only jump down as time increases, $u$ is clearly a sub-solution at the points of discontinuity in time. It remains to find $\alpha$ such that $u$ is a sub-solution also where it evolves smoothly.

At smooth points of $\tilde u$ away from the pinning set $\bigcup_{i\in Z} B_1(i\,Nl)$, it is sufficient to verify the inequality $u_t \leq \A u- W'(u)$ pointwise to obtain that $\tilde u$ is a viscosity sub-solution.

At points where $\widetilde\phi=0$, $\tilde u$ is clearly a sub-solution as it is constant in time, $W'=0$ and the fractional Laplacian pulls upwards at the minimum value. At points where $\tilde u= u_{l,Nl}$, $\tilde u$ is a sub-solution since the interface exerts a downward force proportional to its inverse distance to an obstacle. Since the obstacles are constructed to compensate a pressure of $1/l$ and the interface does not come closer than $\frac N8 l$, we can choose $N$ sufficiently large to make sure that the obstacles compensate this pressure. 

Finally, take $(t, x)$ such that $\tilde u(t, x) = \widetilde\phi(t, x)$ and compute
\begin{align*}
\A \tilde u(t, x) - W'(\tilde u(t, x))  &= \A (\tilde u - \widetilde\phi)(t, x) + \A(\widetilde\phi(t,x)) - W'(\widetilde\phi (t, x))\\
	&= \A (\tilde u - \widetilde\phi)(t, x) + O(l^{-2})\\
	&= \int_{x + \frac{N}8l}^\infty \frac{(\tilde u - \widetilde\phi)(y)}{|y-x|^2} \dy + O(l^{-2})\\
	&\geq - \sum_{i=1}^\infty \int_{i\,Nl -1}^{i\,Nl+1} \frac{1}{|y- x|^2}\dy + \int_{(i-1)\,Nl}^{i\,Nl-1}\frac{c\,|y- i\,Nl|^{-2}}{|y-x|^2}\dy\\
	&\hspace{3cm} + \int_{i\,Nl+1}^{(i+1)Nl} \frac{c\,|y- i\,Nl|^{-2}}{|y-x|^2}\dy + O(l^{-2})\\
	&\geq -\sum_{i=1}^\infty \frac{2}{|i\,Nl - Nl/4|^2} + \frac2{|(i-1 -1/4)\,Nl|^2}\int_1^l\frac{1}{y^2}\dy + O(l^{-2})\\
	&= O(N^{-2}l^{-2}) + O(l^{-2})\\
	&= O(l^{-2}).
\end{align*}
Now we can finally use that the second $O(l^{-2})$ term is positive where $\widetilde\phi\in(0,\beta]$ or $[1-\beta,1]$, and it compensates the first term for large enough $N$ (which can be chosen independently of $l$). At the interface we have
\[
\bar c:= \max_{z\in [\beta,1-\beta]}\widetilde\phi'(z)>0,
\]
so we can choose $\alpha = O(l^{-2})$ such that $\tilde u$ is a sub-solution. 

{\bf Rescaling.} Let us pass back to the original length scale:
\[
\ul u_\eps(t,x) = \tilde u\left( \frac{t}{c_\eps\eps^2\,|\log\eps|}, \frac{x}\eps\right).
\]
By construction, $\ul u_\eps$ is a sub-solution of \eqref{eq single step evolution} since (at smooth points)
\begin{align*}
c_\eps\eps\,\partial_t\ul u_\eps &= \frac{c_\eps\,\eps}{c_\eps\,\eps^2\,|\log\eps|} (\partial_t\tilde u)\\
	&\leq \frac{1}{\eps\,|\log\eps|}\left((\A\tilde u) - W'(\tilde u)\right)\\
	&= \frac1{|\log\eps|}\left(\A\ul u_\eps - \frac1\eps\,W'(\ul u_\eps)\right).
\end{align*}
We know that the interface in the blow up scale moves by exactly $Nl_\eps$ in time $Nl_\eps/(4\alpha) \sim N\,l_\eps^3$, so the rescaled interface moves by $d_\eps = N\eps l_\eps$ at the time $t_\eps$ such that
\[
\frac{t_\eps}{c_\eps\,\eps^2\,|\log\eps|} \approx N\,\left(\frac{d_\eps}\eps\right)^3\qquad \LRa\qquad \frac{t_\eps}{c_\eps} = \frac{N\,d_\eps^3\,|\log\eps|}\eps.
\]
To obtain a speed of $O(1)$, we need $t_\eps\sim d_\eps$, so we choose the acceleration factor
\[
c_\eps = \frac{\eps}{d_\eps^2\,|\log\eps|}.
\]

{\bf Limiting behaviour.} In the limit $\eps\to 0$, the jumps over shorter and shorter spatial intervals disappear and $\ul u_\eps(t,\cdot)$ converges locally in $L^1$ to the characteristic function of an interval $I(t)$ moving with uniform speed. If $c_\eps$ is chosen too small, then $I(t) =\emptyset$ for all positive times, whereas too large $c_\eps$ implies $I(t) \equiv [0,\infty)$. In the scaling regime identified above, we have $I(t) = [ct,\infty)$ for some $c>0$. 

{\bf Construction of super-solutions.} Here we work directly on the macroscopic scale. Let us make the ansatz
\[
\ol u_\eps(t, x) = \min\left\{\phi\left(\frac{x-\alpha t}\eps \right), \ubar_{l_\eps}\left(\frac x\eps\right)\right\}.
\]
In the stationary case $\alpha=0$ as the minimum of two solutions this is clearly a super-solution. Still for small positive $\alpha$, it suffices to consider $(t, x)$ such that $\ol u_\eps(t, x) = \phi(\dots)$ since at other points (including the non-smooth points where $\phi$ and $\ubar_{l_\eps}$ meet) the super-solution property is still easily established. The function is continuous by construction and satisfies the pinning constraint. Finally, compute 
\begin{align*}
c_\eps\eps \,\partial_t\ol u_\eps(t,x) 
	&= -\alpha\,\frac{\eps}{d_\eps^2\,|\log\eps|}\,\phi'\left(\frac{x-\alpha t}\eps\right)\\
	&\geq \frac{-c\alpha\eps}{d_\eps^2\,|\log\eps|}\,\frac{1}{\left(\frac{x-\alpha t}{\eps}\right)^2+1}\\
\frac{1}{|\log\eps|}\left(\A\ol u_\eps - \frac1\eps\,W'(\ol u_\eps)\right)
	&= \frac1{\eps\,|\log\eps|}\left((\A\phi) - W'(\phi)\right) + \frac1{|\log\eps|}\left(\A\ol u_\eps - \frac1\eps (\A \phi)\right)\\
	&= \frac1{|\log\eps|}\A \left(\ol u_\eps - \phi\left(\frac{\cdot-\alpha t}\eps\right)\right)\\
	&\leq \frac{1}{|\log\eps|}\sum_{i\in\Z} \int_{[id_\eps-\eps, id_\eps+\eps]}\frac{-\phi\left(\frac{y-\alpha t}\eps\right)}{|y-x|^2}\dy\\
	&\leq - \frac{2\eps\,\phi\left(\frac{x-\alpha t-d_\eps}\eps\right)}{|\log\eps|\,d_\eps^2}
\end{align*} 
by just considering the index $i\in \Z$ such that $x-d_\eps \leq id_\eps \leq x$. Since $\phi(z) \geq c\,\min\{1, 1/|z|\}$ vanishes more slowly than $\phi'$, this shows that
\[
c_\eps\eps \,\partial_t\ol u_\eps(t,x) \geq \frac{1}{|\log\eps|}\left(\A\ol u_\eps - \frac1\eps\,W'(\ol u_\eps)\right)
\]
for suitably small $\alpha$ which is independent of $\eps>0$.
\end{proof}

\begin{remark}
It is possible to prove a comparison principle for the evolution equation \eqref{eq gradient flow} in the viscosity sense. This has been done for equations on the whole space and operators of the type $(-\Delta)^s$ for $s>1/2$ in \cite{MR2121115}, but the methods go through for $s\leq 1/2$ and equations on domains with only minor modifications. Thus, the existence of a viscosity solution $u_\eps$ with $\ul u_\eps \leq u_\eps \leq \ol u_\eps$ follows directly by Perron's method. For a viscosity solution with given initial data, additional barriers have to be constructed. It is well known that $u$ solves 
\[
\begin{pde}
c_\eps\eps\,u_t &= \frac1{|\log\eps|}\left(\A u - \frac{W'(u)}\eps \right) &t>0, x\in \Omega_\eps \\
		u &\equiv 0& t\geq 0, x\in \Omega_\eps^c\\
		u &= u^0 & t=0
\end{pde}
\]
 in the viscosity sense if and only if $v\coloneqq e^{-\lambda t}u$ solves an equation of the form with the non-linearity
 \[
 f_\lambda(t,v) = -\frac{e^{-\lambda t} W\left(e^{\lambda t}v\right)}\eps - \lambda v.
 \]
in place of $W'(u)/\eps$. When we choose $\lambda$ large enough (depending on $W$ and $\eps$), the function $f_\lambda$ is monotone in $u$ uniformly in $t$. For an initial condition $u^0$ such that 
\[
\A u - \frac{W'(u)}\eps \in L^\infty\left(\Omega_\eps\right),
\]
we can then construct sub- and supersolutions by
\[
\ol v(t,x) = u_0(x) + Ct\cdot \chi_{\Omega_\eps}, \qquad \ol v(t,x) = u_0(x) - Ct\cdot \chi_{\Omega_\eps}
\]
for some large constant $C>0$. This includes all initial conditions in $C^2_b(\R)$ and all initial conditions that we are interested in. Since $v$ attains the initial condition, also $u$ does. The domain $\Omega_\eps$ can be chosen to be periodic or the perforated real line $\R\setminus B_\eps(d_\eps\cdot\Z)$.
\end{remark}

\subsection{The Corrector} \label{section corrector}

We have seen that the pinning constraint induces motion on a time-scale which is strictly slower than the $\log\eps$-timescale on which the {next-order term in a $\Gamma$-expansion \eqref{eq first order gamma}} acts as a kink/anti-kink attraction. We want to show that the pinning does not affect the attraction and annihilation of a single kink/anti-kink pair. For this purpose, we need a more refined construction to obtain the exact speed of an interface rather than just the order in $\eps$. Therefore, we need to know the behaviour of a moving interface to the next order. 

Set
\[
\eta:= \frac1{W''(0)}\int_{-\infty}^\infty \left(\phi'\right)^2\dx.
\]

\begin{lemma}
There exists a function $\psi \in H^{1/2}(\R) \cap C^{1,\alpha}(\R) \cap L^\infty(\R)$ for some $\alpha>0$ which solves
\[
\A \psi - W''(\phi)\psi = \phi' + \eta\,\left(W''(\phi) - W''(0)\right).
\]
The solution $\psi$ satisfies the estimate
\[
|\psi'(x)| \leq \frac{C}{1+x^2}
\]
and if $W\in C^{3,1}(\R)$ then also $\psi \in C^{2,\alpha}(\R)$ for some $\alpha>0$ and
\[
|\psi''(x)| \leq \frac{C}{1+x^2}.
\]
\end{lemma}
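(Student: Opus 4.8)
The plan is to construct $\psi$ as a solution of the linearised equation
\[
\A\psi - W''(\phi)\,\psi = g, \qquad g := \phi' + \eta\,\big(W''(\phi) - W''(0)\big),
\]
by the direct method, and then to bootstrap regularity and decay. First I would record the key structural facts about the right-hand side $g$: since $\phi' = O((1+x^2)^{-1})$ by Lemma \ref{lemma interface} and $\phi\to 1, 0$ at $\pm\infty$ with $|1 - 1/(W''(0)x) - \phi(x)| = O(x^{-2})$ (and the analogous estimate at $-\infty$), we have $W''(\phi) - W''(0) \to W''(1) - W''(0)$ at $+\infty$ and $\to 0$ at $-\infty$, with error $O(|x|^{-1})$; hence $g$ does not decay, but the constant $\eta$ is chosen precisely so that $g$ has the right averaged behaviour — this is the solvability/Fredholm condition that makes $\psi$ bounded rather than linearly growing. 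Concretely, differentiating $\A\phi = W'(\phi)$ gives $\A\phi' = W''(\phi)\,\phi'$, so $\phi'$ spans the kernel of the linearised operator $\mathcal L := \A - W''(\phi)$; I would check that $g \perp \phi'$ in $L^2$, i.e. $\int g\,\phi' = 0$, which pins down $\eta = \big(\int (\phi')^2\big)\big/\big(\int W''(\phi)\phi' - W''(0)\,\textrm{(boundary term)}\big)$ — and indeed the stated formula $\eta = (W''(0))^{-1}\int (\phi')^2$ should drop out after an integration by parts using $\A\phi' = W''(\phi)\phi'$ and $\int \phi'\,(W''(\phi) - W''(0)) = [W'(\phi)]_{-\infty}^{\infty} - W''(0)\,[\phi]_{-\infty}^{\infty}$, both of which are computable from the limits of $\phi$ and $W'(\phi)$.

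Second, I would set up the variational problem: minimise
\[
J(v) = \tfrac12\,[v]_{1/2}^2 + \tfrac12\int_\R W''(\phi)\,v^2 \dx + \int_\R g\,v\dx
\]
over $v \in H^{1/2}(\R)$ in the orthogonal complement of $\phi'$ (to quotient out the kernel). The quadratic form $\tfrac12[v]_{1/2}^2 + \tfrac12\int W''(\phi)v^2$ is not obviously coercive since $W''(\phi)$ need not be positive everywhere (it equals $W''(0)>0$ and $W''(1)$ near $\pm\infty$ but may dip negative in the transition zone); however, away from a compact set it is uniformly positive, and on the compact set the $H^{1/2}$-seminorm controls an $L^2$-term by interpolation, so a standard argument gives coercivity modulo the one-dimensional kernel — here one uses that $\phi'$ has been projected out and that $\phi' > 0$, so $\mathcal L$ is the linearisation at a stable-type profile and has no negative eigenvalue. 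This yields a weak solution $\psi$, unique up to adding multiples of $\phi'$, which I normalise by the orthogonality constraint. Boundedness $\psi \in L^\infty$ then follows: write $\psi = \psi_1 + \psi_2$ where $\psi_1$ handles the non-decaying part of $g$ via an explicit bounded ansatz (e.g. a smooth interpolation between $0$ and the constant $-\eta(W''(1)-W''(0))/W''(1)$ that solves the constant-coefficient equation at $\pm\infty$), reducing to an equation with $L^p$ or decaying right-hand side for $\psi_2$, to which the regularity theory of \cite{ros2014dirichlet} or standard fractional elliptic estimates apply.

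Third, for the derivative estimate $|\psi'(x)| \le C/(1+x^2)$ I would differentiate the equation: $\psi'$ solves
\[
\A\psi' - W''(\phi)\,\psi' = g' + W'''(\phi)\,\phi'\,\psi =: h,
\]
and now $h$ \emph{does} decay like $(1+x^2)^{-1}$ — $g' = \phi'' + \eta W'''(\phi)\phi'$ decays like $(1+x^2)^{-3/2}$ by \eqref{eq: decay interface}, and $W'''(\phi)\phi'\psi = O((1+x^2)^{-1})$ since $\psi$ is bounded. Then I would use the singular-integral representation \eqref{eq fracoperator form} together with a barrier/comparison argument at $\pm\infty$: the operator $\A - W''(\phi)$ behaves like $\A - W''(0)$ (a positive-mass Bessel-type operator with algebraically decaying Green's function $\sim |x|^{-2}$) far out, and convolving the $(1+x^2)^{-1}$-decaying $h$ against that kernel reproduces $(1+x^2)^{-1}$ decay — this is the mechanism already used implicitly for $\phi'$ itself. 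The main obstacle, and the step I would spend the most care on, is exactly this decay bootstrap: closing the estimate requires a careful comparison-function argument for the \emph{non-constant-coefficient} fractional operator $\A - W''(\phi)$, handling the region where $W''(\phi)$ is not close to its limiting values, and making sure the slowly-decaying tails of the half-Laplacian kernel do not degrade the rate. Finally, assuming $W \in C^{3,1}$, one more differentiation gives an equation for $\psi''$ with right-hand side $g'' + (W'''(\phi)\phi')'\psi + 2W'''(\phi)\phi'\psi' + \ldots$, all terms of which decay at least like $(1+x^2)^{-1}$ using \eqref{eq: decay interface}, $|\psi'| = O((1+x^2)^{-1})$, and boundedness of $\psi$; the same barrier argument then yields $|\psi''(x)| \le C/(1+x^2)$, and Schauder-type estimates from \cite{ros2014dirichlet} or \cite{cabre2005layer} upgrade this to $\psi \in C^{2,\alpha}$.
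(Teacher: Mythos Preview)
Your approach is sound and closely parallels the paper's sketch, which defers existence to \cite{gonzalez2010slow} and \cite{Patrizi:2016aa} and then, as you do, differentiates the equation and argues decay by comparison; the paper's specific device is to use the explicit profiles $\phi_\alpha'(t)=\frac{2\alpha/\pi}{\alpha^2+t^2}$ for the potentials $W_\alpha(z)=(1-\cos(\pi z))/(\pi^2\alpha)$ as barriers, after splitting $W''(\phi)=f_++f_-$ with $\inf f_+>0$ and $f_-$ compactly supported, and then invoking the argument of \cite[Theorem 1.6]{cabre2005layer}. One correction: since $W$ is $1$-periodic you have $W''(1)=W''(0)$, so $W''(\phi)-W''(0)\to 0$ at \emph{both} ends and in fact $g=O(|x|^{-1})$; your explicit-ansatz step for $\psi_1$ is therefore unnecessary and the variational/Fredholm setup works directly with $g\in L^2(\R)$. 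Your self-contained variational construction, with orthogonality to $\phi'$ pinning down $\eta$ exactly as you compute, is more detailed than the paper's citation, and the Perron--Frobenius reasoning for the spectral gap on $(\phi')^\perp$ is the correct mechanism; the paper's explicit $\phi_\alpha'$ barriers are a concrete way to carry out the comparison step you rightly flag as the delicate point.
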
 

The Lemma is proved in \cite[Theorem 3.2]{gonzalez2010slow} without the decay estimate, see also \cite[Lemma 2.2]{Patrizi:2016aa}. A simple proof goes as the one for the decay estimate on $\phi'$.

\begin{proof}[Idea of Proof:]
We only sketch the proof of the decay estimates. Consider the case $W_\alpha(z) = \frac{1- \cos(\pi z)}{\pi^2 \alpha}$ for $\alpha>0$ which has the explicit solution $\phi_\alpha(t) = \frac2\pi \,\arctan(\frac t\alpha)$ (with potential wells at $\pm 1$ instead of $0$ and $1$). We note that the derivative of any optimal transition $\phi$ satisfies
\[
\A\phi' - W''(\phi)\phi' = 0
\]
so in particular 
\[
\A (\phi_\alpha') + \frac1\alpha \,\phi_\alpha' \geq 0
\]
for all large (positive and negative) $x$. Given another potential $W$, we split $W''(\phi) = f_+(x) + f_-(x)$ with $\inf f_+>0$ and $f_-$ compactly supported. This splitting allows us to use a comparison with the solutions $\phi_\alpha$ for a suitable $\alpha$, taking the compactly supported `bad' term to the other side. We calculate formally 
\begin{align*}
\A \psi - W''(\phi)\psi &= \phi' + \eta\,\left(W''(\phi) - W''(0)\right)\\
\A \psi' - W''(\phi)\psi' &= W^{(3)}(\phi)\phi'\psi + \phi'' + \eta\,W^{(3)}(\phi)\phi'\\
\A \psi'' - W''(\phi) \psi'' 
	&= W^{(3)}(\phi)\left\{2\phi'\psi' + \phi''\psi + \eta \phi''\right\} + W^{(4)}(\phi)\left\{(\phi')^2\psi + \eta(\phi')^2\right\}.
\end{align*}
If $W\in C^{3,1}(\R)$, then the last equation makes sense with a right hand side in $L^\infty(\R)\cap L^1(\R)$ and the regularity of $\psi$ can be improved to $C^{2,\alpha}(\R)$. The decay now follows as in the proof of \cite[Theorem 1.6]{cabre2005layer}.
\end{proof}

Note that due to the decay estimate on the derivative, $\phi \pm \eps \psi$ is still monotone increasing for all small enough $\eps>0$. As before, this is needed when a moving interface comes close to an obstacle and jumps instantaneously to ensure that the jump is pointwise down in time and preserve the sub-solution property at jump points.

\subsection{Dynamics on the Real Line II}

We can now use the slowness of the obstacle-driven evolution in comparison to the kink/anti-kink attraction to show that the pinning constraint has no influence on the motion of a single kink/anti-kink pair.

\begin{theorem}[Asymptotically flat crystal]\label{theorem step line}
Let $x_{i,\eps} = i\,d_\eps$ for $d_\eps\gg \sqrt{\eps}$. Then there exist $\ul u_\eps \leq \ol u_\eps$ which are a viscosity sub- and super-solution of
\[
\begin{pde}
\frac{1}{|\log\eps|} \eps\, u_t &= \frac{1}{|\log\eps|}\left(\A u - \frac1\eps\,W'(u)\right) &\text{in }\R\setminus \bigcup_{i\in \Z}\overline{B_\eps(id_\eps)}\\
	u& =0 &\text{on }\bigcup_{i\in \Z}\overline{B_\eps(id_\eps)}
\end{pde}
\]
respectively such that
\[
\lim_{\eps\to 0} \ul u_\eps(t, \cdot) = \lim_{\eps\to 0} \ol u_\eps(t,\cdot) = \chi_{[-r(t), r(t)]}
\]
in $L^2(\R)$ for all $t>0$ with
\[
r(t) = \sqrt{r(0)^2 - \frac{t}{\int_{-\infty}^\infty (\phi')^2\dt}\,}.
\]
\end{theorem}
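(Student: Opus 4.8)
The plan is to realise the kink/anti-kink pair as a suitable gluing of three ingredients: two copies of a modified travelling profile $\widetilde\phi_L$ (one for the left interface near $-r(t)$ and one, reflected, for the right interface near $r(t)$), together with the stationary obstacle sub-solutions $\ubar_{l}$ from Lemma \ref{lemma obstacle sub-solution} placed at each pinning site, and on top of this the first-order corrector $\psi$ from Section \ref{section corrector} which produces the $|\log\eps|^{-1}$-scale kink/anti-kink attraction. Concretely, for the sub-solution I would take, in the blow-up scale, an ansatz of the form $\ul u$ built from $\phi\big(\frac{x+r(t)/\eps}{\cdot}\big) + \eps\psi(\cdots)$ near the left interface, $\phi\big(\frac{r(t)/\eps - x}{\cdot}\big) + \eps\psi(\cdots)$ near the right interface (so that the $\{u\approx1\}$ phase is the interval in between), interpolated suitably in the middle plateau, then truncate/modify as in Lemma \ref{lemma modified interface} so the profile sits slightly below $1$, and finally take the minimum (for a super-solution) or an analogous downward modification (for a sub-solution) with the $\ubar_{l_\eps}$ near each obstacle. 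The radius law $r(t) = \sqrt{r(0)^2 - t/\int(\phi')^2}$ is exactly the ODE $\dot r = -\tfrac{1}{2r\int(\phi')^2}$, which is the Gonzalez--Monneau kink/anti-kink attraction rate, so the point is to verify that our perforation does not alter it.

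The key steps, in order, would be: (1) recall from \cite{gonzalez2010slow} that for the unperforated problem the ansatz $\phi + \eps\psi$ with $r(t)$ solving the above ODE is, up to lower-order errors, a sub/super-solution of $\frac{1}{|\log\eps|}\eps u_t = \frac1{|\log\eps|}(\A u - \tfrac1\eps W'(u))$ — this gives the attraction rate and the $1/|\log\eps|$ time scale; (2) show that inserting the obstacles only perturbs the equation by a term coming from $\A$ acting on the difference between our function and the smooth two-interface profile, and that this term is localised near the obstacles and of size $O(\eps^2/(d_\eps^3|\log\eps|))$ per obstacle — exactly the heuristic computation in Section \ref{section heuristic} — which after summing over $i\in\Z$ and using $d_\eps\gg\sqrt\eps$ is $o(1/|\log\eps|)$, hence negligible compared with the attraction; (3) combine with the obstacle sub-solution $\ubar_{l_\eps}$ (Lemma \ref{lemma obstacle sub-solution}, whose decay $1 - c/|x|^2$ beats the $1 - c/|x|$ decay of $\phi$) by taking minima/maxima, checking the sub/super-solution property survives at the non-smooth gluing points (here one uses that the obstacle solutions are genuine super-solutions with a favourable right-hand side, and that $\phi\pm\eps\psi$ is still monotone, so a moving interface that reaches an obstacle can "jump over" it with a pointwise-downward-in-time jump, preserving upper semicontinuity and the viscosity sub-solution property); (4) handle the instant when the two interfaces collide, i.e.\ $r(t)\downarrow 0$, by noting that once $2r(t)$ is of order $\eps$ the configuration simply annihilates to $u\equiv 0$, which is a stationary solution, so the sub/super-solutions can be continued by zero; (5) pass to the limit $\eps\to0$: the jumps over $O(\eps)$-intervals disappear in $L^2$, $\ubar_{l_\eps}\to 1$ away from obstacles, and both $\ul u_\eps$ and $\ol u_\eps$ converge to $\chi_{[-r(t),r(t)]}$, which by the comparison principle (as in the Remark following Theorem \ref{theorem slowness line}) traps the actual gradient flow.

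The main obstacle I expect is step (2)–(3): making the gluing of the corrected travelling profile to the obstacle sub-solutions rigorous while keeping uniform control of all error terms. One must choose the matching scales carefully — the length $l_\eps = d_\eps/(\eps N)$ for the obstacle cell problem, the parameter $L$ defined through $1-\tfrac1L = \ubar_{l}(-x_0)$, and the corrector amplitude $\eps$ — so that (a) near an obstacle the downward force from a nearby interface, of order $1/l_\eps$, is compensated by the obstacle's built-in $M/l$ right-hand side (fixing $N$ large), (b) on the $\widetilde\phi$-region the error $|\A\widetilde\phi - W'(\widetilde\phi) - (\A\phi - W'(\phi))| = O(L^{-2})$ from Lemma \ref{lemma modified interface} is dominated by the $O(1/|\log\eps|)$ attraction, and (c) the cross-term $\A(\ul u - \text{(smooth two-interface profile)})$ restricted to a neighbourhood of the obstacles is estimated exactly as in the proof of Theorem \ref{theorem slowness line}, summing a series $\sum_i (i\,Nl)^{-2}$ against the interface contribution. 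The bookkeeping of signs at each of the finitely many non-smooth interfaces between the pieces, together with the reflection symmetry of the kink/anti-kink configuration, is where the argument is most delicate but, as the authors indicate, is a refinement of the already-established one-interface construction.
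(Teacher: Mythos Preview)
Your overall architecture matches the paper's: cite the Patrizi--Valdinoci/Gonzalez--Monneau two-interface ansatz $\phi-\eps\psi$ for the unpinned equation with the ODE $\dot r = -1/(2r\int(\phi')^2)-\delta$, modify via $f_L$ as in Lemma~\ref{lemma modified interface}, glue in obstacle sub-solutions, and show the perforation errors are lower order. However, there is one genuine gap and a couple of scale slips.

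\textbf{The gap: decoupling $L_\eps$ from $l_\eps$.} You set $L$ via $1-1/L=\ubar_l(-x_0)$, which forces $L\sim l_\eps=d_\eps/(\eps N)$ as in Theorem~\ref{theorem slowness line}. That choice is fine for an order-of-magnitude bound, but here you need the \emph{exact} limit $\chi_{[-r(t),r(t)]}$. With $L\sim l_\eps$, the flattened profile occupies a fixed fraction $\sim 1/(NW''(0))$ of each inter-obstacle gap, so each time the moving interface passes an obstacle it must jump over a region of physical length $\sim d_\eps/N$. Over the whole evolution the interface crosses $\sim r(0)/d_\eps$ obstacles, so the accumulated jump distance is $\sim r(0)/N$, which is $O(1)$, not $o(1)$; your sub-solution's limiting interface would sit at $r(t)-O(1/N)$ rather than $r(t)$. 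The paper avoids this by \emph{decoupling} the interface-modification scale from the obstacle scale: it chooses any $L_\eps$ with
\[
\eps^{-1/2}\ll L_\eps\ll \eps^{-1}d_\eps,
\]
which is nonempty precisely because $d_\eps\gg\sqrt\eps$ (this is exactly where the hypothesis enters). With such $L_\eps$ the modification error is $O(L_\eps^{-2})\ll\eps$, still negligible against the attraction, while the jump region has physical length $\eps L_\eps\ll d_\eps$, so the total jump contribution is $o(1)$. You should make this choice explicit; tying $L$ to the obstacle height as in the single-step case does not deliver the sharp limit.

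\textbf{Scale slips.} In step~(2) you quote the two-dimensional heuristic $\eps^2/(d_\eps^3|\log\eps|)$; the one-dimensional version is $\eps/(d_\eps^2|\log\eps|)$. More importantly, in the equation of Theorem~\ref{theorem step line} the factors $1/|\log\eps|$ cancel, so the relevant comparison is between the modification error $O(L_\eps^{-2})$ and the attraction force $O(\eps)$ in the blow-up, not against $O(1/|\log\eps|)$. Finally, the paper (following \cite{Patrizi:2016aa}) uses the $\delta$-perturbed ODE for $\bar x_\delta$ to make the ansatz a strict sub-solution, and only lets $\delta\to 0$ after $\eps\to 0$; you should include this device.
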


\begin{proof}
Choose $\delta>0$. Following \cite{Patrizi:2016aa} we know that for all small enough $\eps>0$ 
\[
\left[\phi\left(\frac{x + \ol x_\delta}\eps\right) - \eps \psi \left(\frac{x + \ol x_\delta}\eps\right)\right] +\left[\phi\left(\frac{\ol x_\delta -x}\eps\right) - \eps \psi \left(\frac{\ol x_\delta -x}\eps\right)\right] - 1
\]
is a sub-solution of the unpinned equation
\[
\eps\, u_t = \A u - \frac1\eps\,W'(u)
\]
when we choose $\ol x_\delta$ as the solution of the ordinary differential equation
\[
\dot{\ol x}_\delta = \frac1{\int_{-\infty}^\infty\left(\phi'\right)^2\dx} \,\frac{-1}{2\ol x_\delta} -\delta,\qquad \ol x_\delta(0) = r(0) - \delta.
\]
We just sketch the modifications which we need to make in the previous proof to apply it in this situation. Again, we can modify the interface choosing
\[
\eps^{-1/2} = \eps^{-1}\,\eps^{1/2} \ll L_\eps \ll \eps^{-1}\,d_\eps.
\]
This time, we need to modify the function $\phi_\eps = \phi - \eps\psi$. Abbreviate again $w_\eps =  (f_L\circ\phi_\eps) - \phi_\eps$ and compute
\[
w_\eps'' = [(f_L'\circ\phi_\eps) - 1]\,\phi_\eps'' + (f_L''\circ\phi_\eps)\,(\phi'_\eps)^2 \leq C\left(\frac1{L^3} + \frac{\eps}{L^2}\right)
 \]
 so
\begin{align*}
\A w_\eps (x) \leq 2L\,\left|\left| \,w_\eps''\right|\right|_{L^\infty(x-L,x+L)} + \frac2L\,\left|\left|\, w_\eps \right|\right|_{L^\infty(\R)} \leq C\,\left(\frac{1}{L^2} + \frac{\eps}{L}\right).
\end{align*}

The contribution to the attraction thus is $O(L^{-2} + \eps\,L^{-1}) = O(L_\eps^{-2})$, which was seen to be slow compared to the  kink/anti-kink attraction in the previous proof. When constructing sub-solutions in this setting, we only have to jump over obstacles when we come $L_\eps$-close (as before), but the obstacles are $d_\eps/\eps$-far apart, which is significantly further by our choice of $L_\eps$. Thus both the additional attraction and the fast motion close to obstacles disappear in the limit $\eps\to 0$. Thus the sub-solution converges to
\[
\chi_{[-\ol x_\delta, \ol x_\delta]}
\]
strongly in $L^1(\R)$. Now it suffices to take $\delta\to 0$. Super-solutions are obtained similarly.
\end{proof}

It is expected that the Theorem results can be extended to the case where several up and down steps occur by combining our methods with those of \cite{MR3338445,Patrizi:2016aa}.

We see that motion becomes slow also in this time scale as the compact step becomes wider and wider. If we take a limit such that one transition remains fixed at the origin and let the other one go to $\pm\infty$, we partially recover the statement of the previous Theorem as we see that in this time-scaling, the evolution of a single step is stationary. To recover the optimal time-scale, we could couple the initial width $r(0) = r_\eps(0)$ of the step to $\eps$.

\subsection{Periodic dynamics}

On a circle of finite radius, there is no analogue of a single step. Instead, we can consider the situation in which $\{u_\eps \approx 1\}$ is the majority phase. Without pinning, the majority phase takes over the minority phase in logarithmic time in a gradient flow. This happens precisely as it would if $\{u_\eps\approx 1\}$ is the minority phase and the pinning has no effect, just as on the real line. If $\{u_\eps \approx 1\}$ however is the majority phase, this would increase the energy, and the evolution becomes stationary on all timescales.

The use of energy methods relies on an analogue of Theorem \ref{theorem garroni mueller} being valid in one dimension. We formulate it at the end of this section and assume its validity throughout.

\begin{theorem}\label{theorem step circle}
Denote by $S^1_R = [-R/2, R/2]$ the circle of radius $R$ and $x_{i,\eps}$ be points on $S^1_R$ such that
\[
\min_{i\neq j}|x_{i,\eps}-x_{j,\eps}| \geq d_\eps \gg \sqrt{\eps}
\]
The number of points is denoted by $N_\eps$.
\begin{enumerate}
\item Let $r<R/2$. There exists a weak solution $u_\eps$ of 
\begin{equation}\label{eq evolution 2}
\begin{pde}
\frac{1}{|\log\eps|} \eps \,u_t &= \frac{1}{|\log\eps|}\left(\A u - \frac1\eps\,W'(u)\right) &\text{in }S^1_R\setminus \bigcup_{i=1}^{N_\eps}\overline{B_\eps(x_{i,\eps})}\\
	u &=0 &\text{on }\bigcup_{i =1}^{N_\eps} \overline{B_\eps(x_{i,\eps})}
\end{pde}
\end{equation}
such that $u_\eps(0,\cdot) \to \chi_{[-r/2,r/2]}$ which satisfies
\[
u_\eps(t,\cdot) \to \chi_{[-r(t), r(t)]}
\]
for all $t>0$ independently of the distribution of points $x_{i,\eps}$. Here $r(t)$ solves
\[
\dot r = - \frac1{2r} + 2 \sum_{n=1}^\infty \frac{2r}{(nR)^2 - 4r^2}, \qquad r(0)  = r/2.
\]
 If the points $x_{i,\eps}$ satisfy the conditions of Theorem \ref{theorem garroni mueller}/Proposition \ref{conjecture GM 1d}, then also $\E_\eps(u_\eps(0,\cdot))\to \E(\chi_{[-r/2,r/2]})$.

\item Let $r>R/2$ and assume that the points $x_{i,\eps}$ satisfy the conditions of Proposition \ref{conjecture GM 1d}. Then there exists a weak solution $u_\eps$ of 
\[
\begin{cases}
c_\eps \eps\, u_t = \frac{1}{|\log\eps|}\left(\A u - \frac1\eps\,W'(u)\right) &\text{in }S^1_R\setminus \bigcup_{i=1}^{N_\eps}\overline{B_\eps(x_{i,\eps})}\\
	\hspace{1.15cm}u =0 &\text{on }\bigcup_{i=1}^{N_\eps}\overline{B_\eps(x_{i,\eps})}
\end{cases}
\]
such that $u_\eps(0,\cdot) \to \chi_{[-r/2,r/2]}$ and $\E_\eps(u_\eps(0,\cdot))\to \E(\chi_{[-r/2,r/2]})$ which satisfies
\[
u_\eps(t,\cdot)\to \chi_{[-r/2,r/2]}
\]
for all $t>0$, independently of $c_\eps\to 0$. 
\end{enumerate}
\end{theorem}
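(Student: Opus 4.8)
The plan is to handle the two regimes with the two complementary techniques developed in the paper. For $r<R/2$ the phase $\{u_\eps\approx1\}$ is in the minority and the kink/anti-kink attraction (together with its periodic images on $S^1_R$) drives its collapse on the $|\log\eps|^{-1}$-time scale; this is captured by moving sub- and super-solutions exactly as in Theorem~\ref{theorem step line}, the only new ingredient being the periodic kernel. For $r>R/2$ the majority phase would have to expand, which raises the (one-dimensional Garroni--M\"uller) bulk energy, so a stationary sub-solution barrier together with energy monotonicity pins the configuration for every time and every time rescaling.

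\emph{Part (1).} I would transcribe the construction of Theorem~\ref{theorem step line} to $S^1_R$. Fix $\delta>0$, choose $L_\eps$ with $\sqrt\eps\ll\eps L_\eps\ll d_\eps$, and take
\[
\underline u_\eps(t,x)=\Big[\phi\big(\tfrac{x+\overline x_\delta}{\eps}\big)-\eps\,\psi\big(\tfrac{x+\overline x_\delta}{\eps}\big)\Big]+\Big[\phi\big(\tfrac{\overline x_\delta-x}{\eps}\big)-\eps\,\psi\big(\tfrac{\overline x_\delta-x}{\eps}\big)\Big]-1,
\]
truncated via $f_{L_\eps}$ as in Lemma~\ref{lemma modified interface} and glued to the periodic obstacle sub-solution $\bar u_{l_\eps}$ of Lemma~\ref{lemma obstacle sub-solution} ($l_\eps=d_\eps/\eps$) whenever an interface comes $\eps L_\eps$-close to an obstacle (jumping over it instantaneously; this is downward in $t$ since $\phi-\eps\psi$ is monotone), where $\overline x_\delta$ solves the stated ODE with an extra $-\delta$ and $\overline x_\delta(0)=r/2-\delta$. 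The super-solution $\overline u_\eps$ is built analogously, with $+\delta$ and with the interface profile capped from above by the obstacle solutions as in the super-solution construction of Theorem~\ref{theorem slowness line}. The verification that these are viscosity sub-/super-solutions of \eqref{eq evolution 2} runs as in Theorems~\ref{theorem slowness line}--\ref{theorem step line}: the truncation error is $O(L_\eps^{-2})$, negligible against the kink force on the $|\log\eps|^{-1}$-scale; the $O(\eps/d_\eps^2)$ pull an interface exerts near an obstacle is absorbed by the forcing $M/l_\eps$ built into $\bar u_{l_\eps}$ since $d_\eps\gg\sqrt\eps$; the jumps are one-sided in time. The one genuinely new computation is the force felt by the interface at $+\overline x_\delta$ from the companion (anti-)kink at $-\overline x_\delta$: the kernel on $S^1_R$ is $K_R(x,y)=\sum_{k\in\Z}|x-(y+kR)|^{-2}$, so besides the real-line term from the branch $k=0$ at distance $2\overline x_\delta$ one obtains a convergent sum over $k\neq0$ over the periodic images at distances $2\overline x_\delta+kR$ and $R-2\overline x_\delta+kR$; using $\phi(x)=1-\tfrac1{W''(0)|x|}+O(x^{-2})$ these assemble into exactly $-\tfrac1{2r}+2\sum_{n\ge1}\tfrac{2r}{(nR)^2-4r^2}$ (the factor $\int(\phi')^2$ having been normalised into the time scale). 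Sending $\eps\to0$ and then $\delta\to0$, both barriers converge in $L^2(S^1_R)$ to $\chi_{[-r(t),r(t)]}$ with $r$ solving the ODE, hence so does any solution whose initial datum is trapped between them; choosing $u_\eps(0,\cdot)$ to be a $\Gamma$-recovery sequence for $\chi_{[-r/2,r/2]}$ from Proposition~\ref{conjecture GM 1d} (the one-dimensional Garroni--M\"uller theorem) -- which, after shrinking $\delta$, still lies between $\underline u_\eps(0,\cdot)$ and $\overline u_\eps(0,\cdot)$ -- yields in addition $\E_\eps(u_\eps(0,\cdot))\to\E(\chi_{[-r/2,r/2]})$.

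\emph{Part (2).} Now $\{u_\eps\approx1\}=[-r/2,r/2]$ has length $r>R/2$, so $\{u_\eps\approx0\}$ is the minority phase. \emph{No growth:} along the flow $\E_\eps(u_\eps(t,\cdot))+c_\eps\eps\int_0^t\|\partial_s u_\eps\|_{L^2}^2\ds=\E_\eps(u_\eps(0,\cdot))$, hence $\E_\eps(u_\eps(t,\cdot))\le\E_\eps(u_\eps(0,\cdot))\to\E(\chi_{[-r/2,r/2]})$ for every $c_\eps\to0$; by the compactness part of Proposition~\ref{conjecture GM 1d} a subsequence of $u_\eps(t,\cdot)$ converges in $L^2$ to some $v\in BV(S^1_R,\Z)$, and $\Gamma$-liminf gives $\E(v)\le\E(\chi_{[-r/2,r/2]})$. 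Since $\E(\chi_{[-\rho,\rho]})$ is strictly increasing in $\rho$ (the interface cost is constant in $\rho$ while the bulk term equals $2\rho\,\alpha(1)$ with $\alpha(1)>0$), and -- in combination with the lower barrier below -- a competitor with extra interfaces costs strictly more, one gets $v=\chi_{[-\rho,\rho]}$ with $\rho\le r/2$: the phase $\{u_\eps\approx1\}$ cannot expand. \emph{No shrinkage:} in the majority configuration the short minority phase wants to contract, i.e. the kink/anti-kink interaction pushes the $\{u_\eps\approx1\}$-interfaces \emph{outward}, a force of the same $O(\eps/d_\eps^2)$ size near an obstacle that $M/l_\eps$ in Lemma~\ref{lemma obstacle sub-solution} was designed to absorb. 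Hence, after an arbitrarily small adjustment of $\delta$ so that $\pm(r/2-\delta)$ fall between obstacles, gluing $\bar u_{l_\eps}$ over the obstacles in $[-(r/2-\delta),r/2-\delta]$ to $\widetilde\phi$ of Lemma~\ref{lemma modified interface} at $\pm(r/2-\delta)$ and to $0$ on the complement produces a \emph{time-independent} viscosity sub-solution $\underline w_\eps$ of \eqref{eq evolution 2} with $\underline w_\eps\to\chi_{[-(r/2-\delta),r/2-\delta]}$; one may simply take $u_\eps(0,\cdot)=\underline w_\eps$ with the fudge parameter $\to0$ slowly in $\eps$, which is both a recovery sequence for $\chi_{[-r/2,r/2]}$ and $\ge\underline w_\eps$ for each fixed $\delta$ once $\eps$ is small. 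By the comparison principle $u_\eps(t,\cdot)\ge\underline w_\eps$, so $v\ge\chi_{[-(r/2-\delta),r/2-\delta]}$, and with $\delta\to0$ this forces $v=\chi_{[-r/2,r/2]}$; since $v$ is the same along every subsequence, the whole family converges, uniformly in the choice of $c_\eps\to0$ because only energy monotonicity, compactness and a stationary barrier were used.

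\emph{Main obstacle.} As throughout the paper, the crux is the construction and sub-/super-solution verification near the obstacles -- making the gluing of the periodic profile $\bar u_{l_\eps}$ to the corrected, truncated transition preserve the correct inequality -- which relies on the sharp decay $\phi(x)=1-\tfrac1{W''(0)|x|}+O(x^{-2})$, $|\psi'|,|\psi''|=O(x^{-2})$ and the $O(l^{-2})$ error bounds of Lemmas~\ref{lemma obstacle sub-solution}--\ref{lemma modified interface}, together with the separation $\sqrt\eps\ll\eps L_\eps\ll d_\eps$ that makes the obstacle-induced force negligible against the kink/anti-kink attraction. The new but essentially routine inputs are the evaluation of the periodic-kernel sum producing the series in the ODE of Part~(1), the sign check (outward push) that legitimises the stationary obstacle sub-solution in Part~(2), and the one-dimensional Garroni--M\"uller theorem -- Proposition~\ref{conjecture GM 1d} -- with its compactness and recovery-sequence statements, on which both energy claims rest and which is assumed here.
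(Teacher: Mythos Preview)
Your treatment of Part~(1) follows the paper's own argument essentially line for line: periodic lifting of the kink/anti-kink profile, the same gluing to the obstacle sub-solutions of Lemmas~\ref{lemma obstacle sub-solution}--\ref{lemma modified interface} with the separation $\sqrt\eps\ll\eps L_\eps\ll d_\eps$, and the periodic-kernel computation yielding the stated ODE. This part is correct.

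In Part~(2) your overall architecture (stationary sub-solution barrier for \emph{no shrinkage}, energy monotonicity plus $\Gamma$-liminf for \emph{no growth}) is also the paper's, but your energy argument has a genuine gap. You assert that $\E(\chi_{[-\rho,\rho]})$ is strictly increasing in $\rho$ because ``the interface cost is constant in $\rho$''. That is only true for $\rho<R/2$; at $\rho=R/2$ the perimeter term drops to zero, so $\E(\mathbf 1)=\alpha(1)\cdot R$ may well be \emph{smaller} than $\E(\chi_{[-r/2,r/2]})=\text{perimeter}+\alpha(1)\cdot r$ (this happens precisely when $\alpha(1)(R-r)$ is less than the perimeter contribution, which is not excluded by $r>R/2$). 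Your barrier rules out $v\equiv 0$ and, combined with the energy bound, all competitors with at least two jump points and strictly larger measure than $[-r/2,r/2]$; it does \emph{not} rule out $v\equiv 1$.

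The paper closes this hole with a continuity argument you have omitted: if along some sequence $u_\eps(t,\cdot)\to\mathbf 1$, then since $u_\eps\in C^0([0,T],L^2)$ one can pick, for each $\eps$, an intermediate time (equivalently a different acceleration $\tilde c_\eps$) at which $\int_{S^1_R}u_\eps$ is bounded strictly between $r$ and $R$. The limit along that diagonal sequence is then a set $E$ with $[-r/2,r/2]\subset E\subsetneq S^1_R$, hence with perimeter at least that of a nontrivial interval and strictly larger bulk, so $\E(\chi_E)>\E(\chi_{[-r/2,r/2]})$, contradicting the energy bound. You should insert this step; without it the ``no growth'' half of your argument does not close.
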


\begin{proof} {\bf Proof of (1).} Note that the results of \cite{MR3338445} also hold in this setting and that an unpinned solution to the time-rescaled gradient flow is governed by this ODE. In the proof, the discussion of the constants in front of $\eps$-powers are finite is slightly more involved than in the case of finitely many layers. One has to use periodicity in an essential way always combining the force exerted by a couple of a kink/anti-kink pair to obtain cancellations between otherwise infinite forces.

We construct sub- and super-solutions periodically on $\R$ and then take them as functions on the circle. Note that the series
\[
U(x):= \sum_{k\in \Z} \left[ (\phi-\eps\,\psi)\left(\frac{x- kR - r/2}\eps\right) - (\phi-\eps\psi)\left(\frac{x- kR + r/2}\eps\right) \right]
\]
converges absolutely and uniformly for all $x\in\R$ by comparison with $\sum_{k=1}^\infty k^{-2}$ since for $kR> x+r/2$ we have
\begin{align*}
\phi\left(\frac{x- kR - r/2}\eps\right) - \phi\left(\frac{x- kR + r/2}\eps\right) &= 1- \frac{1}{W''(0)\,\frac{x- kR - r/2}\eps} + O\left(\frac{\eps^2}{(x- kR - r/2)^2}\right)\\
	&\quad - \left(1 - \frac{1}{W''(0)\,\frac{x- kR - r/2}\eps} + O\left(\frac{\eps^2}{(x- kR - r/2)^2}\right)\right)\\
	&= \frac1{W''(0)}\left(\frac\eps{x-kR-r/2} - \frac\eps{x-kR+r/2}\right) + O(\eps^2\,(kR)^{-2})\\
	&= \frac{\eps}{W''(0)} \frac{r}{(x-kR)^2 - (r/2)^2}+ O(\eps^2\,(kR)^{-2}).
\end{align*}
A similar estimate holds for $kR< x-r/2$ and for $\psi$. Hence, the partial sums of the series converge and a continuous limit exists. Since super-solutions for a finite number of kink/anti-kink pairs have precisely this form, we can construct a limiting (viscosity) super-solution to the unpinned problem using this series. Now, it is easy to see that again
\[
\ol u(x) = \min\left\{U(x), \ol u_l\right\}
\]
is a super-solution to the pinned equation. 

For weak sub-solutions, of course, we do not need this machinery, since the functions can easily be periodically extended as they become constant away from the interface. The proof proceeds like that of Theorem \ref{theorem step line} with an additional term in the calculations from periodicity. The resulting ODE is computed by periodicity:
\begin{align*}
\dot r &=\left(\sum_{n\in \Z\setminus\{0\}} \frac{1}{r - (r+nR)} - \frac1{r- (-r +nR)}\right) - \frac1{2r}\\
	&= - \frac1{2r} + 2 \sum_{n=1}^\infty \frac{2r}{(nR)^2 - 4r^2}.
\end{align*}

The initial condition with converging energies which lies above the sub-solution is given by
\[
u_0 = \min \left\{\sum_{k\in\Z}\left[\phi\left(\frac{x - kl - r/2 - \delta_\eps}\eps\right) - \phi\left(\frac{x -kl + r/2 + \delta_\eps}\eps\right)+1\right],\ubar\left(\frac x\eps\right)\right\}
\]
where $\ubar$ is the periodic solution of Lemma \ref{lemma obstacle sub-solution} with $M\equiv 0$. Furthermore, $\delta_\eps$ is a small parameter which ensures that the initial condition lies above the sub-solution.

{\bf Proof of (2).} Fix $t>0$. For any sequence $c_\eps>0$ and solutions $u_\eps$ to the evolution equation, we observe that $u_\eps(t,\cdot) \to \chi_{E(t)}$ up to a subsequence since the initial energies are bounded, and the energy decreases along the gradient flow.

Assume there is $c_\eps\to 0$ such that $u_\eps(t, \cdot)\to \chi_{E(t)}$ with $E(t) \neq [-r/2, r/2]$. Since the unpinned evolution equation wants to expand the $\{u=1\}$ phase under these initial conditions, we can easily construct a stationary sub-solution to the initial condition: the kink/anti-kink helps us, and the contracting force of the obstacles becomes negligible. Thus $[-r, r]\subset E(t)$ for all $t>0$.

We assume that an analogue of Theorem \ref{theorem garroni mueller} holds in one dimension. For a contradiction, assume that $[-r,r] \subsetneq E(t) \subsetneq S_R^1$. Then $\E(\chi_{E(t)}) \geq 2 + \Lambda\,\L^1(E(t)) > \E(\chi_{[-r/2,r/2]}$, which is a contradiction. The inequality holds since any set which is neither empty nor the whole circle has a perimeter $\geq 2$ in one dimension and since the $\{u=1\}$ phase was assumed to be expanding. 

If $E(t) = S^1_R$, then we use the fact that $u_\eps \in C^0([0,T],L^2)$ evolves continuously when considered as an $L^2$-valued function. This allows us to choose a different sequence $\tilde c_\eps$ such that the integral of $u_\eps$ at time $t$ is always strictly bounded away from both $r$ and $R$. Thus we have reduced this case to the previous one and obtain a contradiction like before.
\end{proof}

\begin{remark}
 It is an open question whether the statement above is stable in the sense that all solutions to \eqref{eq evolution 2} with initial conditions $u_\eps^0$ satisfying
\[
u_\eps^0\to \chi_{[-r,r]}, \qquad \E_\eps(u_\eps^0)\to \E(\chi_{[-r,r]})
\]
behave in the same way.
\end{remark}

Finally, let us state the result needed for the use of energy methods above.

\begin{proposition}\label{conjecture GM 1d}
Let $x_{i,\eps}\in S^1$ be points such that $1\leq i\leq N_\eps$ with $N_\eps/|\log\eps|\to \Lambda$ satisfying the following assumptions:

\begin{enumerate}
\item {\rm (well-seperated)} There exists $\beta<1$ independent of $\eps>0$ such that $d(x_{i,\eps},x_{j,\eps})>\eps^\beta$ for all $1\leq i\neq j\leq N_\eps$.

\item {\rm (finite capacity density)} The obstacles approach a multiple of the Lebesgue measure through $\frac{1}{|\log\eps|}\sum_{i=1}^{N_\eps}\delta_{x_i}\to \Lambda\,\L^2$ for $\Lambda\in (0,\infty)$.
\end{enumerate}

Take the space
\[
X_\eps:= \{u_\eps \in H^{1/2}(S^1)\:|\:u_\eps\equiv 0\text{ on }B_\eps(x_{i,\eps})\text{ for } 1\leq i\leq N_\eps\}
\]
and the energy functional
\[
\E_\eps:X_\eps\to\R, \qquad \E_\eps(u_\eps) =  \frac1{|\log\eps|}\left(\,[u_\eps]^2_{1/2} + \int_{S^1} \frac1\eps W(u_\eps)\dx\right)
\]
where $W$ is a periodic multi-well potential and $W\geq c\,\dist^2(\cdot,\Z)$ for some $c>0$. Then
\[
\left[\Gamma(L^2)-\lim_{\eps\to 0} \E_\eps\right](u) = \int_{S^1} \alpha(u)\dx + 4\int_{J_u}[u]\d\H^{1}
\]
where $u\in BV(S^1,\Z)$, $[u] = u^+-u^-$ denotes the jump of $u$ on the jump set $J_u$ and $\alpha(z)$ is determined as the solution of the cell problem
\[
\alpha(z) = \inf\left\{ \frac12\,[w]_{1/2,\R^2}^2 + \int_{\R}W(w)\dx \:\bigg|\: w-z\in H^{1/2}(\R), \:w\equiv 0 \text{ on }B_1(0)\right\}.
\]
\end{proposition}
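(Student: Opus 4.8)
The plan is to prove the two inequalities of $\Gamma$-convergence separately, adapting the strategy of Garroni and M\"uller \cite{MR2178227,MR2231783} --- which covers the two-dimensional and more general cases --- to the one-dimensional situation; as recorded in the Remark following Theorem \ref{theorem garroni mueller}, the only structural change is the scaling of the $H^{1/2}$-seminorm, which is why here the critical number of obstacles is $N_\eps \sim |\log\eps|$ rather than $|\log\eps|/\eps$. Throughout one may truncate competitors to $0 \le u_\eps \le \max$ without increasing the energy, and one may invoke the one-dimensional line-tension analysis of the \emph{unconstrained} fractional Modica--Mortola energy $\frac1{|\log\eps|}\big([u_\eps]_{1/2}^2 + \frac1\eps\int_{S^1} W(u_\eps)\dx\big)$, which, after the $|\log\eps|$-renormalisation, $\Gamma$-converges to $4\int_{J_u}[u]\,\dH^1$ on $BV(S^1,\Z)$ (see \cite{MR1657316,KurzkeBV,KurzkeGF}); compactness in $L^2(S^1)$ with limit in $BV(S^1,\Z)$ having finitely many jumps is a byproduct of the same analysis.

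For the $\liminf$-inequality, let $u_\eps \to u$ in $L^2(S^1)$ with $\sup_\eps \E_\eps(u_\eps) < \infty$, so that $u \in BV(S^1,\Z)$ has finitely many jumps $p_1,\dots,p_m$ of heights $k_1,\dots,k_m$. Fix $\delta>0$, pairwise disjoint intervals $I_j = B_\delta(p_j)$, and a mesoscopic scale $\eps \ll \rho_\eps \ll \eps^\beta$. I would bound $\E_\eps(u_\eps)$ below by two asymptotically decoupled pieces. First, on each ball $B_{\rho_\eps}(x_i)$ --- these are pairwise disjoint with $\sum_i |B_{\rho_\eps}(x_i)| \lesssim \rho_\eps N_\eps \to 0$ --- blow up by $\eps$, $v_i(y) := u_\eps(x_i + \eps y)$, so that $v_i \equiv 0$ on $B_1$; a dyadic mean-value (pigeonhole) argument over annuli, as in \cite{MR1493040,MR1657316}, selects for most $i$ a radius on which $v_i$ is close, in the appropriate fractional-trace sense, to the local integer value $z_i$ of $u$ near $x_i$, and comparison with the cell problem \eqref{eq cell problem} (read in dimension one) then yields $\frac1{|\log\eps|}\big([u_\eps]_{1/2,B_{\rho_\eps}(x_i)}^2 + \int_{B_{\rho_\eps}(x_i)}\frac1\eps W(u_\eps)\dx\big) \ge \frac{\alpha(z_i)}{|\log\eps|} - o(1)$. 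Summing over $i$ and using $\frac1{|\log\eps|}\sum_i \delta_{x_i} \wto \Lambda\,\L^1$ together with continuity of $\alpha$ and of $u$ off $J_u$, these terms converge to $\int_{S^1}\alpha(u)\,\dx$ (with the normalisation of $\alpha$ as in Theorem \ref{theorem garroni mueller}). Second, the remaining seminorm $\frac1{|\log\eps|}[u_\eps]_{1/2,(\bigcup_j I_j)\setminus\bigcup_i B_{\rho_\eps}(x_i)}^2$ together with the potential outside the obstacle balls still sees a full transition of height $k_j$ inside each $I_j$, so the one-dimensional line-tension lower bound gives at least $4|k_j| - \omega(\delta)$ there; excising the small balls removes only pairs $(x,y)$ meeting a set of measure tending to $0$ and is therefore harmless. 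Letting $\eps\to0$ and then $\delta\to0$ produces $\liminf_\eps \E_\eps(u_\eps) \ge \int_{S^1}\alpha(u)\,\dx + 4\int_{J_u}[u]\,\dH^1$. I expect the per-obstacle cell-problem lower bound to be the main obstacle: without a boundary condition the localised energy has trivial infimum, so one must quantify how $L^2$-convergence $u_\eps\to u$ forces $u_\eps$ near $z_i$ on a good mesoscopic sphere --- this is exactly where the decay $1 - c|x|^{-2} \le w(x)$ for cell minimisers (Lemma \ref{lemma subsolution line}) and the absence of a logarithmic tail in $\alpha(z)$ enter, since they guarantee that the obstacle energy and the logarithmically divergent line-tension part genuinely live at different scales and do not compete.

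For the $\limsup$-inequality I would exhibit an explicit recovery sequence for $u \in BV(S^1,\Z)$ with finitely many jumps. Pick $\eps \ll r_\eps \ll \eps^\beta$; away from the jump points and from $\bigcup_i B_{r_\eps}(x_i)$ set $u_\eps = u$. Near each jump $p_j$ insert $|k_j|$ copies of the optimal profile $\phi\big(\pm(\cdot - p_j)/\eps\big)$ spread over a sub-interval of length $\delta_j \to 0$, so that the kink--kink interaction is of lower order in $|\log\eps|$; this contributes $4|k_j| + o(1)$ per jump. Near each obstacle $x_i$ with local value $z_i$ --- all but $O(m)$ obstacles lie at distance $\gg r_\eps$ from $J_u$, the remaining $O(m)$ ones contributing $O(1/|\log\eps|)$ in total --- glue an almost-minimiser $w_{z_i}$ of the cell problem, rescaled to $w_{z_i}(\cdot/\eps)$, to the constant $z_i$ by a cutoff at radius $r_\eps$; the $|x|^{-2}$-decay of $w_{z_i}$ makes the cutoff error $o(1)$ and the per-obstacle energy $\alpha(z_i) + o(1)$, summing to $\int_{S^1}\alpha(u)\,\dx$. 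It remains to control the cross-terms in $[u_\eps]_{1/2}^2$: between distinct obstacle balls they are bounded by $C\sum_{i\ne j}\eps^2/d(x_{i,\eps},x_{j,\eps})^2 = O(\eps^2 |\log\eps|^3) \to 0$ using $d > \eps^\beta$ and $N_\eps \sim |\log\eps|$, and between the obstacle corrections and the macroscopic transition structure they are $O(\eps|\log\eps|/r_\eps) \to 0$, again by the cell-minimiser decay. Altogether $\limsup_\eps \E_\eps(u_\eps) \le \int_{S^1}\alpha(u)\,\dx + 4\int_{J_u}[u]\,\dH^1$, which completes the proof. As in \cite{MR2178227,MR2231783}, the same construction and estimates accommodate finite-strength pinning and obstacle sizes proportional to $\eps$.
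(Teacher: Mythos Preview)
The paper does not prove this proposition. Immediately after its statement the authors write: ``We will not prove the proposition in this article, the sceptical reader may also take it as a conjecture.'' There is therefore no paper proof to compare your proposal against.

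That said, your outline is a reasonable sketch of how one would expect the argument to go: separating the line-tension lower bound near jumps from the capacity lower bound near obstacles via a mesoscopic scale $\eps \ll \rho_\eps \ll \eps^\beta$, invoking the unconstrained one-dimensional $\Gamma$-limit for the former and a blow-up to the cell problem for the latter, is precisely the Garroni--M\"uller strategy you cite, adapted to the $N_\eps \sim |\log\eps|$ scaling. The recovery sequence you describe --- optimal profiles at jumps, rescaled cell minimisers at obstacles, decay estimates controlling the cross terms --- is likewise the natural construction. If you intend to turn this into an actual proof, the point you yourself flag as ``the main obstacle'' (forcing $u_\eps$ close to the local integer value on a good mesoscopic annulus so that the cell-problem comparison applies) is indeed where the work lies; in one dimension this should be somewhat easier than in two, but it still requires a careful pigeonhole/dyadic argument and is not merely a matter of quoting Lemma~\ref{lemma subsolution line}. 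As written, your proposal is a plausible plan rather than a proof.
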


We will not prove the proposition in this article, the sceptical reader may also take it as a conjecture. For a useful compactness property, either a more restrictive distribution of obstacles or a potential growing at $\infty$ should be imposed.

\subsection{External Driving Forces}

Let us assume that an external sheer force is applied to the crystal. On the scale where the crystal can be assumed to be periodic, an applied force is constant in space and thus enters the evolution equation as an additive constant.

\begin{theorem}\label{theorem external circle}
Denote by $S^1_R = [-R/2, R/2]$ the circle of radius $R$ and $x_{i,\eps}$ be points on $S^1_R$ satisfying the conditions of Proposition \ref{conjecture GM 1d}. Let $r<R$, $f\in\R$. There exists a weak solution $u_\eps$ of 
\begin{equation}\label{eq evolution force}
\begin{cases}
\eps \,u_t = \frac{1}{|\log\eps|}\left(\A u - \frac1\eps\,W'(u)\right)  + f&\text{in }\R\setminus \bigcup_{i=1}^{N_\eps}\overline{B_\eps(x_{i,\eps})}\\
	\hspace{1.15cm}u =0 &\text{on }\bigcup_{i =1}^{N_\eps} \overline{B_\eps(x_{i,\eps})}
\end{cases}
\end{equation}
with an initial condition satisfying 
\[
u_\eps(0,\cdot) \to \chi_{[-r/2,r/2]}\quad \text{in }L^2(S^1), \qquad \E_\eps(u_\eps(0))\to \E(\chi_{[-r/2,r/2]})
\]
such that the following hold:

\begin{enumerate}
\item If $f<0$, then $u_\eps(t, \cdot) \to \chi_{[-r/2 + |f|t, r/2 -|f|t]}$ in $L^2(S^1)$ for all $t>0$ (the characteristic function of the empty set being zero).

\item There exists $f_0>0$ such that for $0<f<f_0$, we have $u_\eps(t, \cdot) \to \chi_{[-r/2,r/2]}$ in $L^2(S^1)$ for all $t>0$. This also holds if we accelerate the solutions to any faster time-scale.
\end{enumerate}
\end{theorem}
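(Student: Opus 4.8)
Both assertions are proved by the sub/super-solution trapping scheme already used for Theorems~\ref{theorem step line} and~\ref{theorem step circle}, with the constant load $f$ entering only as a shift of the effective interface velocity. I work periodically on $\R$ and read the result off on $S^1_R$; a weak solution $u_\eps$ with the (constructed) initial datum exists by the standard parabolic theory of the appendix, respectively by Perron's method together with the comparison principle for this fractional equation (cf.\ the remark after Theorem~\ref{theorem slowness line} and \cite{MR2121115}), and by construction it lies between the barriers. The one preliminary I would record is the effect of $f$ on a single transition: for $u(t,x)=\phi((x-\xi(t))/\eps)$ the equation $\eps u_t=\tfrac1{|\log\eps|}(\A u-\tfrac1\eps W'(u))+f$ produces, after projecting the residual onto $\phi'$, the reduced law $\dot\xi=-f/\!\int_\R(\phi')^2$ (the mobility constant being normalised to $1$ in the statement); the next-order correction is absorbed by gluing in the force-adapted analogue of the corrector $\psi$ from Section~\ref{section corrector} — built exactly as there with the right-hand side $\phi'$ replaced by a multiple of $\phi'$ coming from $f$ — and then composing with the cut-off $f_L$ of Lemma~\ref{lemma modified interface} so that the profile is constant away from the interface. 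Since $\phi\pm\eps\psi$ stays monotone, an interface reaching an obstacle may be continued by an instantaneous jump which is monotone in time; the kink/anti-kink and periodic interaction terms are $O(1/|\log\eps|)$ in this (gradient-flow) time scaling and thus disappear in the limit, as in Theorem~\ref{theorem step line}, where on a circle one always pairs a kink with an anti-kink to make the lattice sums converge.

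\textbf{Part (1), $f<0$.} Here $f$ drives both transitions inward at speed $\sim|f|$ and, as in Theorems~\ref{theorem slowness line}/\ref{theorem step line}, the obstacles neither assist nor resist — their contribution to $\A u$ near an interface at distance $d_\eps$ is $O(\eps/d_\eps^2)=o(1)$, negligible against $f$. For the lower barrier I would take a \emph{piecewise} function: the force-corrected two-interface profile at $\pm\bar x_\delta(t)$ with $|\dot{\bar x}_\delta|$ slightly above $|f|/\!\int(\phi')^2$, glued in a window of radius $\sim d_\eps/4$ around each obstacle still inside $\{u\approx1\}$ to the obstacle sub-solution $\bar u_l$ of Lemma~\ref{lemma obstacle sub-solution} with $M\asymp|f|\,\eps\,|\log\eps|$ (bounded for an equidistributed $d_\eps\sim1/|\log\eps|$, so that the local side condition $\A\bar u_l-W'(\bar u_l)\ge -f\,\eps\,|\log\eps|$ holds); whenever an interface overtakes an obstacle the function jumps \emph{down}, preserving the sub-solution property. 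For the upper barrier I would instead use $\overline u_\eps=\min\{U(t,\cdot),\ \bar u_l(\cdot-x_{1,\eps}),\dots,\bar u_l(\cdot-x_{N_\eps,\eps})\}$, where $U$ is the two-interface profile with $|\dot{\bar x}_\delta|$ slightly \emph{below} $|f|/\!\int(\phi')^2$ and each $\bar u_l(\cdot-x_{i,\eps})$ is the profile of Lemma~\ref{lemma obstacle sub-solution} with small $M>0$ (or the periodic version of Lemma~\ref{lemma subsolution line}): a minimum of super-solutions is a super-solution, for $f<0$ the near-identity $\A\bar u_l-W'(\bar u_l)=M/l$ already gives the (slack) super-solution inequality provided $M\le|f|\,\eps\,|\log\eps|$, so no upward jumps are needed and the pinning constraint is met automatically. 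Taking the constructed $u_\eps(0)$ between $\underline u_\eps(0)$ and $\overline u_\eps(0)$ (a $\Gamma$-recovery-type profile, which also makes $\E_\eps(u_\eps(0))\to\E(\chi_{[-r/2,r/2]})$), comparison gives $\underline u_\eps\le u_\eps\le\overline u_\eps$, and both barriers converge in $L^2(S^1_R)$ to $\chi_{[-r/2+|f|t,\,r/2-|f|t]}$ after letting $\delta\to0$, with the characteristic function of the empty set read as $0$ once the transitions have met.

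\textbf{Part (2), $0<f<f_0$.} The lower bound is the easy half: a \emph{stationary} sub-solution at $\chi_{[-r/2+\delta,\,r/2-\delta]}$, built as above but with $\bar x_\delta$ time-independent, is admissible because in every regime (bulk $\approx1-1/L$, cut-off $\widetilde\phi$ of Lemma~\ref{lemma modified interface}, and obstacle parts) the residual $\tfrac1{|\log\eps|}(\A u-\tfrac1\eps W'(u))$ is at worst $-C\eps|\log\eps|$ while $+f>0$; hence, for the constructed datum, comparison yields $[-r/2,r/2]\subset E(t):=\{u(t,\cdot)=1\}$ for every $t$, and this remains true after any further acceleration $c_\eps\to0$ since a stationary function is a sub-solution at every time speed. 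The upper bound is the energy/$\Gamma$-convergence half. The forced equation (also in the accelerated case) is the $L^2$-gradient flow of the tilted functional $\F_\eps:=\E_\eps-f\int\cdot$, so $\F_\eps(u_\eps(t))\le\F_\eps(u_\eps(0))$; since $u\mapsto f\int u$ is $L^2$-continuous, $\F_\eps\xrightarrow{\Gamma(L^2)}\F:=\E-f\int\cdot$ with $\E$ the one-dimensional Garroni--M\"uller limit of Proposition~\ref{conjecture GM 1d}. Passing to a subsequence along which $u_\eps(t)\to\chi_{E(t)}$ in $L^2$ (using the compactness built into Proposition~\ref{conjecture GM 1d}) and applying the $\liminf$-inequality together with $\F_\eps(u_\eps(0))\to\F(\chi_{[-r/2,r/2]})$, one gets $\F(\chi_{E(t)})\le\F(\chi_{[-r/2,r/2]})$, i.e.\ $\mathrm{Per}(E(t))+(\Lambda\alpha(1)-f)\,|E(t)|\le\mathrm{Per}([-r/2,r/2])+(\Lambda\alpha(1)-f)\,r$, where $\mathrm{Per}$ denotes the interface part of $\E$. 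With $f<f_0=\Lambda\alpha(1)$ the coefficient $\Lambda\alpha(1)-f$ is positive, so together with $|E(t)|\ge r$ and the fact that $\mathrm{Per}$ is minimised over non-trivial sub-arcs by an interval, the only options are $E(t)=[-r/2,r/2]$ and $E(t)=S^1_R$; the latter is excluded exactly as in Theorem~\ref{theorem step circle}(2), by exploiting the $L^2$-continuity of $t\mapsto u_\eps(t)$ to choose a (possibly different) vanishing sequence keeping $\int u_\eps(t)$ bounded away from both $r$ and $R$, reducing to the first case.

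\textbf{Main obstacle.} The delicate point in part~(1) is the super-solution: one cannot jump over obstacles upward, which forces the $\min\{U,\bar u_l(\cdot-x_{i,\eps})\}$ construction, and one must check that the \emph{unforced} obstacle profiles are genuine super-solutions of the \emph{forced} equation — true only because $f<0$ — and that the moving piece $U$, equipped with the force-adapted corrector, is a super-solution for a speed just below $|f|/\!\int(\phi')^2$; this is where the sharp decay estimates of Lemmas~\ref{lemma obstacle sub-solution} and~\ref{lemma modified interface} and of $\psi$ are used to the full, in the spirit of \cite{gonzalez2010slow,Patrizi:2016aa,MR3338445}. In part~(2) the substantive input is the one-dimensional $\Gamma$-limit of Proposition~\ref{conjecture GM 1d} (which the paper only asserts), plus the bookkeeping of the tilted energy and the separate treatment of $E(t)=S^1_R$.
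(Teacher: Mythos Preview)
Your approach is essentially the same as the paper's, which gives only a two-sentence sketch (``proceeds exactly as before'' for $f<0$, and ``an energy barrier still needs to be overcome'' for small $f>0$); you have correctly unpacked this into the sub/super-solution trapping for Part~(1) and the stationary sub-solution plus tilted-$\Gamma$-limit energy argument for Part~(2), including the identification $f_0=\Lambda\alpha(1)$ and the separate handling of $E(t)=S^1_R$ via $L^2$-continuity in time.

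One minor scaling slip: in Part~(1) your conditions on $M$ are off. After rescaling $v(x)=\bar u_l(x/\eps)$ one has $(1/|\log\eps|)\bigl(\A v-\tfrac1\eps W'(v)\bigr)=M/(l\eps|\log\eps|)$, and with $l=d_\eps/\eps\sim 1/(\eps|\log\eps|)$ this equals $M/(d_\eps|\log\eps|)\sim M$, not $M\eps|\log\eps|$. So for the \emph{sub}-solution obstacle one needs $M\gtrsim|f|$ (bounded, as Lemma~\ref{lemma obstacle sub-solution} requires $M$ bounded), and for the \emph{super}-solution obstacle one needs $M\lesssim|f|$; your stated bound $M\le|f|\eps|\log\eps|$ would force $M\to0$. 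This does not affect the argument, since for the super-solution your alternative of using the whole-line minimiser of Lemma~\ref{lemma subsolution line} (which satisfies $\A u-W'(u)=0$ exactly) works without any such condition, and for the sub-solution one simply takes $M$ a fixed constant slightly above $|f|$.
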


The proof proceeds exactly as before, but the applied force now acts on the fast time-scale when it is contracting so that the kink/anti-kink attraction disappears in the limit. In the other direction, we note that for small forces, an energy barrier still needs to be overcome. Similar results could be obtained if the force $f$ is allowed to scale with $\eps$ -- if for example $f_\eps \sim |\log\eps|^{-1}$ is negative, then the attraction of interfaces and the external force are assumed to act additively on the same time scale, compare \cite{gonzalez2010slow} where periodic forcing is considered.

\section{Two-dimensional Dynamics}\label{section 2d}

\subsection{Technical Points}

The one-dimensional evolution reaches the macroscopic time-scale at an obstacle distance of $d_\eps \sim \sqrt{\eps/|\log\eps|}$ which is the natural distance of obstacles in the setting of Theorem \ref{theorem garroni mueller}. In two dimensions, this is still expected to be slow.

Let us for the moment assume that the obstacles are distributed on a perfect grid $d_\eps\cdot \Z^2$ in the plane. Then we can use the moving interface sub-solutions constructed in one dimension as a sub-solution by extending them as constant in the second direction. They are still pinned sub-solutions since at $\{u=0\}$, the sub-solution property holds trivially on the non-pinned set. However, these extended sub-solutions can be considered as sub-solutions in the situation when the obstacles are $\eps$-tubes around lines rather than unions of $\eps$-balls. The volume of the pinning set is $N_\eps\cdot \eps^2 \sim \eps\,|\log\eps|$, while the volume of the $\eps$-tubes is proportional to $\sqrt{N_\eps}\cdot \eps \sim \sqrt{\eps\,|\log\eps|}$, so considerably larger. While the volume of the pinning set is a bad proxy for estimating its influence, this simple observation suggests that using a one-dimensional construction may well over-estimate the influence of pinning. Indeed, a back-of-the-envelope calculation like in Section \ref{section heuristic} gives much slower speed for super-solutions.

The modification of the interface needs to be done more carefully in this setting, since the flattening out of the interface to facilitate glueing induced motion on the macroscopic time-scale in this scaling. The refined modification is presented below.

\begin{lemma}\label{lemma sub-solution 2d}
Let $1/2 < \zeta \leq 1$, $F>0$, $l\gg1$. Then there exists 
\[
\ubar\:\in\: C^{1/2}\left(\R^2\right) \,\cap\, C^{1,1/2}_{loc}\left(B_2\setminus \overline{B_1}\right) \,\cap\, C^{1,1/2}\left(\R^2\setminus B_{3/2}\right)
\]
 with the following properties:

\begin{enumerate}
\item We have
\[
\ubar\equiv 0 \text{ on }B_1\qquad\text{and}\quad \A \ubar-W'(\ubar) \geq \frac{1}l + \frac{F}{l^{1+\zeta}} \text{ on  }\R^2\setminus B_1.
\]

\item The function $\ubar$ is constant on $\R^2\setminus B_{l^\zeta/3}$ and
\[
\left|\lim_{|x|\to\infty}u(x) - \left(1- \frac{1}{W''(0)\,l}\right)\right| \leq \frac{C}{l^{2\zeta}}
\]

\item The growth estimate 
\[
\ubar(x) \geq 1- \frac{1}{W''(0)\,l} - \frac{c}{|x|^2}
\]
holds for some $c>0$.
\end{enumerate}
All constants are independent of $l$.
\end{lemma}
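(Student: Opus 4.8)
The plan is to build $\ubar$ by the same two-step mechanism used in Section~\ref{section 1d}: first produce a radial, monotone obstacle profile on a ball of radius $\sim l^{\zeta}$ as a \emph{constrained minimiser with a forcing term}, in the spirit of Lemma~\ref{lemma obstacle sub-solution}, and then \emph{flatten its tail} by composition with a smooth cut-off, in the spirit of Lemma~\ref{lemma modified interface}, keeping precise track of how much the differential inequality degrades at each step. The only genuinely two-dimensional point (cf.\ Remark~\ref{remark decay 2D}) is that a forcing of strength $1/l$ is not available on a torus, where one only gets $1/l^{2}$; it becomes affordable once the forcing is \emph{localised to the ball} $B_{l^{\zeta}/3}$, inside which the profile is held near $1-\tfrac1{W''(0)\,l}$, while outside it relaxes back to $1$ like a minimiser of the cell problem~\eqref{eq cell problem}, and it is the slow $|x|^{-2}$-relaxation of that minimiser that all the estimates hinge on.

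Concretely, as in Lemma~\ref{lemma obstacle sub-solution} I first replace $W$ by a potential agreeing with $W$ on $[0,1]$ that is convex and monotone with quadratic/linear growth outside, so competitors end up valued in $[0,1]$, and fix $\beta>0$ with $-W'(1-t)\ge\tfrac{W''(1)}{2}t$ on $[0,\beta]$ and $W'$ monotone on $[1-\beta,1]$. Set $\rho:=l^{\zeta}/3$, let $\eta$ be a smooth cut-off with $\eta\equiv1$ on $B_{\rho/2}$ and $\eta\equiv0$ off $B_{\rho}$, put $\Lambda_{l}:=\tfrac1l+\tfrac{F}{l^{1+\zeta}}+\tfrac{C^{*}\log l}{l^{3\zeta}}$ with $C^{*}$ to be chosen, and minimise over $v\in 1+H^{1/2}(\R^{2})$ with $v\equiv0$ on $B_{1}$ the functional
\[
\E_{l}(v)=\tfrac12\,[v]_{1/2,\R^{2}}^{2}+\int_{\R^{2}}W(v)\dx+\Lambda_{l}\int_{\R^{2}}\eta\,(v-1)\dx ,
\]
i.e.\ the $z=1$ cell-problem energy~\eqref{eq cell problem} perturbed by a localised driving term; this last integral is finite and $\E_{l}$ is coercive for each fixed $l$, since the $[v]^{2}$-cost of any transition of integer height $k$ dominates the reward $\sim k\Lambda_{l}\rho^{2}$. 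The direct method gives a minimiser; Riesz/Schwarz rearrangement of $1-v$ about the origin does not raise $\E_{l}$ (it leaves $\int W(v)$ invariant by equimeasurability, increases $\int\eta(1-v)$ by the bathtub principle, and keeps $\{v=0\}\supseteq B_{1}$), so the minimiser $v_{l}$ may be taken radial and non-decreasing. Then, exactly as in Lemma~\ref{lemma obstacle sub-solution}: $0\le v_{l}\le1$; $v_{l}\in C^{0,1/2}(\R^{2})\cap C^{\infty}_{loc}(\R^{2}\setminus B_{1})$ by \cite{MR3161511,ros2014dirichlet}; the vanishing set is exactly $B_{1}$ (a larger one would force $\A v_{l}\le\Lambda_{l}\to0$ on an adjacent annulus, against $\A v_{l}\ge c>0$ there for large $l$); and the Euler--Lagrange relation $\A v_{l}-W'(v_{l})=\Lambda_{l}\eta$ holds on $\R^{2}\setminus B_{1}$. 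Bounding the singular integral $\A v_{l}$ on the bulk $\{C_{0}\le|x|\le\rho\}$ by $O(|x|^{-3}\log|x|)+O(l^{-1-\zeta})$ gives $v_{l}=1-\tfrac{\Lambda_{l}}{W''(0)}+O(|x|^{-2})$ there (using $W''(1)=W''(0)$), and item~(3) follows from the barrier $\bigl(1-\tfrac1{W''(0)l}-c|x|^{-2}\bigr)_{+}$ (smoothed near the origin), which for $c$ large is a sub-solution of $\A w-W'(w)=\Lambda_{l}$ wherever $w\ge1-\beta$ — the ``Improved Growth'' step of Lemma~\ref{lemma obstacle sub-solution}, the needed $\A(|x|^{-2})\ge-C|x|^{-2}$ being routine from \eqref{eq fracoperator form} since $\Delta|x|^{-2}=4|x|^{-4}$.

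For the final $\ubar$ I flatten the tail of $v_{l}$: set $a_{l}:=v_{l}(l^{\zeta}/6)$, take a smooth non-decreasing $g$ with $0\le g'\le1$, $g\equiv\id$ below $a_{l}-\delta_{l}$ and $g\equiv a_{l}$ above $a_{l}$, with $\delta_{l}\sim l^{-2\zeta}$ (so $a_{l}-\delta_{l}>1-\beta$ for large $l$), and put $\ubar:=g\circ v_{l}$. By monotonicity of $v_{l}$ this makes $\ubar\equiv a_{l}$ on $\R^{2}\setminus B_{l^{\zeta}/6}\supset\R^{2}\setminus B_{l^{\zeta}/3}$; with the bulk value above and $2\zeta\le1+\zeta$ (i.e.\ $\zeta\le1$) one gets $\bigl|a_{l}-(1-\tfrac1{W''(0)l})\bigr|\le Cl^{-2\zeta}$, which is item~(2); $\ubar$ inherits the stated regularity from $g$ smooth and $v_{l}$; and $\ubar=v_{l}$ on a neighbourhood of $B_{1}$, where $\A\ubar-W'(\ubar)=\Lambda_{l}\ge\tfrac1l+\tfrac F{l^{1+\zeta}}$. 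On the remaining region $v_{l}\ge a_{l}-\delta_{l}>1-\beta$, hence $\ubar=g(v_{l})\le v_{l}$ with both values in $(1-\beta,1)$ where $-W'$ is decreasing, so
\[
\A\ubar-W'(\ubar)\ \ge\ \bigl(\A v_{l}-W'(v_{l})\bigr)+\A(\ubar-v_{l})\ =\ \Lambda_{l}\eta+\A(\ubar-v_{l})\ =\ \Lambda_{l}+\A(\ubar-v_{l})
\]
because the transition region lies in $B_{\rho/2}=\{\eta\equiv1\}$. Estimating $\A(\ubar-v_{l})$ from \eqref{eq fracoperator form} with $\|\ubar-v_{l}\|_{\infty}\le\delta_{l}$, $|(\ubar-v_{l})''|\lesssim|v_{l}''|+\delta_{l}^{-1}|v_{l}'|^{2}\lesssim l^{-4\zeta}$, and transition width $\lesssim l^{\zeta}$, gives $|\A(\ubar-v_{l})|\le Cl^{-3\zeta}$ (and a like bound for $x$ just outside $B_{l^{\zeta}/6}$, where $\ubar$ is already constant and $\A\ubar$ is controlled by the $B_{1}$-dip and the $|y|^{-2}$-tail), which is absorbed by the last term of $\Lambda_{l}$ once $C^{*}$ is large. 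This proves item~(1) on all of $\R^{2}\setminus B_{1}$, with constants depending only on $W$ and $F$.

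The main obstacle is exactly this triangle of scales: localisation radius $l^{\zeta}$ versus forcing strength $1/l$ versus cut-off error. One needs the constrained cell minimiser to relax no faster than $|x|^{-2}$ (the two-dimensional analogue of the sharp decay $|\phi(x)-1|\sim\frac1{W''(0)|x|}$ that drives everything in Section~\ref{section 1d}), so that freezing its value at radius $\sim l^{\zeta}$ costs only $O(l^{-2\zeta})$ — comfortably inside the budget of item~(2) — while the flattening perturbs $\A\ubar-W'(\ubar)$ by only $O(l^{-3\zeta})$, which a correspondingly small over-forcing can soak up. Making the $\A v_{l}$ and $\A(\ubar-v_{l})$ estimates uniform across scales, pinning down $v_{l}$'s bulk value, and ruling out growth of the vanishing set are where the work lies; the rest is a careful transcription of the one-dimensional arguments, and the bookkeeping closes throughout the admissible range $1/2<\zeta\le1$.
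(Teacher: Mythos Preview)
Your construction is genuinely different from the paper's, and considerably more involved. The paper does \emph{not} minimise a two-dimensional functional at all: it takes the one-dimensional circle profile of Lemma~\ref{lemma obstacle sub-solution} on $S^1_{l^\zeta}$ with forcing $M=l^{\zeta-1}+F/l$ (so that $M/l^{\zeta}=1/l+F/l^{1+\zeta}$), extends it to a constant on $\R$ via Lemma~\ref{extension lemma}, and then sets $u_{2D}(x)=\ubar(|x|)$. The single new ingredient is the inequality, for $x=|x|e_1$,
\[
\A^{\R^2} u_{2D}(x)=\int_{\R^2}\frac{\ubar(|y|)-\ubar(|x|)}{|y-x|^3}\dy\ \geq\ \int_{\R^2}\frac{\ubar(y_1)-\ubar(x_1)}{|y-x|^3}\dy\ =\ c_{2,1}\,\A^{\R}\ubar(|x|),
\]
which holds because $\ubar$ is even and non-decreasing on $[0,\infty)$, so $\ubar(|y|)\geq\ubar(|y_1|)=\ubar(y_1)$. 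After this line, every claimed property (the differential inequality, constancy outside $B_{l^\zeta/2}$, the value at infinity, the $|x|^{-2}$ growth) is inherited directly from Lemma~\ref{lemma obstacle sub-solution}. No fresh minimisation, no tail-flattening composition, no new decay analysis.

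Your direct 2D route could in principle be made to work, but as written there is a gap in the far region. Because your forcing $\Lambda_l\eta$ is compactly supported, the minimiser $v_l\in 1+H^{1/2}(\R^2)$ tends to $1$ at infinity, not to $1-\Lambda_l/W''(0)$: across the annulus $\rho/2<|x|<\rho$ where $\eta$ switches off, $v_l$ climbs by $\sim\Lambda_l/W''(0)\sim 1/l$. When you freeze $\ubar\equiv a_l=v_l(l^\zeta/6)$ you therefore discard a mass $v_l-\ubar\sim 1/l$ supported on $\{|y|\gtrsim\rho\}$, which for $|x|\sim l^\zeta$ contributes $\sim l^{-1}\cdot l^{-\zeta}=l^{-1-\zeta}$ to $\A(\ubar-v_l)(x)$ with the wrong sign. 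This is exactly the scale of the $F/l^{1+\zeta}$ term you are trying to preserve, and it is strictly larger than your budgeted reserve $C^*\log l/l^{3\zeta}$ (since $1+\zeta<3\zeta$ only for $\zeta>1/2$, but the \emph{coefficient} is $O(1)$, not $o(1)$). Relatedly, on $|x|>\rho/2$ your displayed identity $\Lambda_l\eta+\A(\ubar-v_l)=\Lambda_l+\A(\ubar-v_l)$ fails because $\eta<1$ there. Both issues are repairable --- replace $F$ by a larger $F'$ in $\Lambda_l$ and analyse the transition layer explicitly --- but at that point you are essentially redoing the circle lemma in two dimensions, which is precisely what the paper's radial-extension trick was designed to avoid.
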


\begin{proof}
Take the sub-solution $\ubar$ constructed in Lemma \ref{lemma obstacle sub-solution} on the circle of length $l^\zeta$ with 
\[
M = M_l = \frac{1}{l^{1-\zeta}} + \frac{F}{l}
\]
and extend it to the whole real line as in Lemma \ref{extension lemma}. Now set
\[
u_{2D}(x) = \ubar(|x|).
\]
Since the norm is $1$-Lipschitz, the function $u_{2D}$ is $C^{0,1/2}$-H\"older continuous and since the norm is $C^\infty$-smooth away from the origin, $u_{2D}$ is exactly as smooth as $\ubar$ outside $\overline{B_1(0)}$. 

The function is constant on $\R^2\setminus B_{l^\zeta/2}(0)$ and satisfies the well-known growth estimate. Note that $F$ vanishes in the error estimate to leading order since $l^{1+\gamma}\ll l^{2\gamma}$. The super-solution property is established by comparing the rotationally symmetric extension to a non-radial extension. Assume that $x= |x|\cdot e_1$ and observe that
\begin{align*}
\A u_{2D}(x) &= \int_{\R^2}\frac{\ubar(|y|) - \ubar(|x|)}{|y-x|^3}\dy \\
	& \geq \int_{\R^2}\frac{\ubar(y_1) - \ubar(x_1)}{|y-x|^3}\dy\\
	&= c_{2,1} \int_\R \frac{u(y_1) - u(x_1)}{|y_1-x_1|^2}\d y_1\\
	&= c_{2,1} \A\ubar(|x|).
\end{align*}
since $\ubar$ is monotone growing away from the origin and $|x| = x_1$. The same holds after rotation for any point $x\in \R^2\setminus B_1(0)$. 

We used that for a function $f:\R^n\to\R$, $f(\hat x, x_n) = g(\hat x)$ for some $g:\R^{n-1}\to\R$ we have $\A^{\R^n}f(\hat x, x_n) = \A^{R^{n-1}}g(\hat x)$ when the fractional Laplacian is computed as a singular integral. The normalising constant $c_{2,1} \neq 1$ appears here because we neglected normalising the fractional Laplacian before. Since the same re-normalisation affects the half-Laplacian acting on the interface and the obstacle sub-solution in the same way, we will not make a difference here and remark only that the Lemma holds for the properly normalised operator.
\end{proof}

In two dimensions, the downward force exerted by the pinning constraint decays faster and the optimal transition approaches $+1$ more quickly. This is connected to the fact that small balls shrink faster in two dimensions, or equivalently, that the boundary condition at infinity has a stronger upwards pull since large circles have increasing measure while two points in one dimension always have the same mass.

\begin{lemma}\label{lemma minimiser 2d}
Let $u\in 1+ H^{1/2}(\R^2)$ be a minimiser of 
\[
\E(u) = [u]_{1/2}^2 + \int_{\R^2} W(u)\dx
\]
under the constraint $u\equiv 0$ on $B_R(0)$. Then $u \in C^{1/2}(\R^2)$ is radially symmetric, smooth away from the pinning set, and satisfies $1 - \frac{c}{|x|^3} \leq u(x) < 1$ for all $x\in \R^2$ and some $c\geq 1$.
\end{lemma}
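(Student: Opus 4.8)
The plan is to mirror the structure of Lemma~\ref{lemma subsolution line}, upgrading the one-dimensional decay rate $|x|^{-2}$ to $|x|^{-3}$ by exploiting the extra dimension. First I would establish existence of a minimiser: the constraint $u\equiv 0$ on $B_R(0)$ together with $u\in 1+H^{1/2}(\R^2)$ defines a closed convex admissible class, and the functional $\E$ is coercive and weakly lower semicontinuous on it (the Gagliardo seminorm is weakly lsc, the potential term is handled by Fatou after passing to an a.e.-convergent subsequence using the compact embedding $H^{1/2}_{loc}\hookrightarrow L^2_{loc}$ together with the quadratic growth of $W$ near its wells; replacing $W$ by a convex modification outside $[0,1]$ as in Lemma~\ref{lemma obstacle sub-solution} lets us argue $0\le u\le 1$ first). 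Uniqueness is not needed; any minimiser will do.

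Next, radial symmetry: since the kernel $|x-y|^{-3}$ is decreasing in $|x-y|$ and the pinning set $B_R(0)$ is a ball, Riesz-type (polarization/Schwarz) rearrangement decreases $[u]_{1/2}^2$ while leaving $\int W(u)$ and the constraint invariant, so any minimiser agrees with its symmetric decreasing rearrangement about $0$; hence $u(x) = v(|x|)$ for some nondecreasing $v$ with $v\equiv 0$ on $[0,R]$. Regularity then follows exactly as in Lemma~\ref{lemma obstacle sub-solution}: the Euler--Lagrange equation $\A u = W'(u)$ holds weakly on $\R^2\setminus \overline{B_R(0)}$ with bounded right-hand side, so by \cite{MR3161511} and \cite{ros2014dirichlet} $u$ is a viscosity solution, $u\in C^{1/2}(\R^2)$ globally and $u\in C^\infty$ away from the pinning set. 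In particular $u(x)\to 1$ as $|x|\to\infty$ (if the limit were some $a<1$, the constant-$a$ tail would force $\A u \le 0 < W'(a)$ far out, contradiction, much as in the ``Determining the Vanishing Set'' step).

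For the quantitative lower bound I would run the barrier argument of Lemma~\ref{lemma obstacle sub-solution}'s ``Improved Growth'' step in the radial two-dimensional setting. Fix $\beta>0$ small with $-W'(1-t)\ge \tfrac{W''(1)}{2}t$ and $W'$ monotone on $[1-\beta,1]$; since $u\to 1$, there is $\rho_0$ with $u\ge 1-\beta$ on $\R^2\setminus B_{\rho_0}(0)$. Define $w(x) = \bigl(1 - c\,|x|^{-3}\bigr)_+$ and compute $\A w - W'(w)$ where $w\ge 1-\beta$: using the singular-integral representation \eqref{eq fracoperator form} with the splitting radius $\rho = |x|/2$ and the bounds $|\nabla(|x|^{-3})|\lesssim |x|^{-4}$, $|D^2(|x|^{-3})|\lesssim|x|^{-5}$, one gets $|\A w(x)| \le C c\,|x|^{-4}\ll \tfrac{W''(1)}{2}\,c\,|x|^{-3}$ for $|x|$ large, so $\A w - W'(w)\ge 0 \ge \A u - W'(u)$ there; the key point is simply that in dimension two the operator $\A$ acting on $|x|^{-3}$ produces a term of order $|x|^{-4}$ (one better than the potential term), exactly as $\A$ on $|x|^{-2}$ gave $|x|^{-4}$ in one dimension against a $|x|^{-2}$ potential term. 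Then the maximum principle argument (if $w-u$ had a positive interior max in $\{u>1-\beta\}$, then $0\ge \A(w-u) \ge W'(w)-W'(u)>0$) gives $u\ge w$ on the region $\{|x|>\rho_0\}$, and after enlarging $c$ so that $1-c\rho_0^{-3}\le \min_{\overline{B_{\rho_0}}} u$ it holds on all of $\R^2$. The strict inequality $u<1$ comes from the pointwise relation $\A u(x) = W'(u(x)) \ge 0$ at a hypothetical touching point $u(x)=1$ combined with $\A u(x)<0$ there (the ball $B_R(0)$ where $u=0$ drags the value strictly below $1$), just as in the ``Improved Boundedness'' step.

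The main obstacle is the decay computation in the third paragraph: one must verify that the two-dimensional fractional Laplacian of the barrier $|x|^{-3}$ really is $O(|x|^{-4})$ with a constant that is uniform and, more delicately, that the contribution of the inner region $B_{\rho_0}(0)$ (where $w$ is not $\ge 1-\beta$, indeed where $u=0$) only helps, i.e.\ contributes with the favourable sign to $\A(w-u)$ at a putative maximum. This is the same bookkeeping of ``integrals with the right sign'' as in Lemma~\ref{lemma obstacle sub-solution}, but it must be carried out in the radial two-dimensional geometry where $\A^{\R^2}$ restricted to radial functions is not simply $\A^{\R^1}$; reducing to a one-dimensional singular integral with a modified (but still $|x|^{-2}$-comparable) kernel, as was done for $u_{2D}$ in Lemma~\ref{lemma sub-solution 2d}, makes this tractable. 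Everything else is a routine transcription of the one-dimensional arguments.
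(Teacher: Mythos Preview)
Your approach follows exactly what the paper intends (it merely says the proof is ``a slight variation of that of Lemma~\ref{lemma obstacle sub-solution} or Lemma~\ref{lemma subsolution line}''), and the overall structure---existence, rearrangement for radial symmetry, regularity via \cite{MR3161511,ros2014dirichlet}, then a barrier comparison---is correct.

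There is one genuine slip in the decay computation. Your claimed bound $|\A w(x)|\le Cc\,|x|^{-4}$ for the barrier $w=(1-c|x|^{-3})_+$ is false as stated: the zero set $\{w=0\}=B_{c^{1/3}}(0)$ contributes
\[
\int_{B_{c^{1/3}}(0)}\frac{0-w(x)}{|y-x|^{3}}\dy \;\sim\; -\,\frac{c^{2/3}}{|x|^{3}}
\]
to $\A w(x)$, which is the \emph{same} order $|x|^{-3}$ as the potential term $-W'(w)\sim \tfrac{W''(1)}{2}\,c\,|x|^{-3}$, not one order better. This term has the unfavourable sign for the inequality $\A w - W'(w)\ge 0$, so it does not ``help'' as you suggest in your final paragraph. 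What rescues the argument---precisely as in the one-dimensional proof of Lemma~\ref{lemma obstacle sub-solution}, where the analogous zero-set term was $O(\sqrt{c_2}/|x|^{2})$ against a potential term $O(c_2/|x|^{2})$---is that the coefficient $c^{2/3}$ is beaten by the coefficient $\tfrac{W''(1)}{2}\,c$ once $c$ is taken large. Your $O(c|x|^{-4})$ estimate is correct for the smooth tail of $w$ and is then a genuinely lower-order remainder. With this bookkeeping fixed, the maximum-principle comparison goes through and the rest of the proposal is sound.
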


The proof is a slight variation of that of Lemma \ref{lemma obstacle sub-solution} or Lemma \ref{lemma subsolution line}.

\subsection{Dynamics in the Plane}

We will now prove that the pinning constraint acts on a much slower time-scale than the kink/anti-kink attraction by considering the model problem of a single infinitely long straight interface on $\R^2$ perfectly aligned with the grid. 

\begin{theorem}\label{theorem 2d slow}
Let $\Gamma_\eps = d_\eps\cdot \Z^2$ for $d_\eps\gg \eps$. Then there exists $\ul u_\eps \leq \ol u_\eps$ which are a viscosity sub- and super-solution of
\begin{equation*}
\begin{pde}
c_\eps^\pm \eps \,u_t &= \frac{1}{|\log\eps|}\left(\A u - \frac1\eps\,W'(u)\right) &\text{in }\R^2\setminus \bigcup_{i\in \Z}\overline{B_\eps(id_\eps)}\\
	u& =0 &\text{on }\bigcup_{i\in \Z}\overline{B_\eps(id_\eps)}
\end{pde}
\end{equation*}
respectively. When we choose $c_\eps^+ = \frac{\eps^2} {d_\eps^3\,|\log\eps|}$ for the time scaling of the super-solution and $c_\eps^- = |\log\eps|\,c_\eps^+$, there are constants $c, C>0$ such that
\[
\lim_{\eps\to 0} \ul u_\eps(t, \cdot) = \chi_{[ct,\infty)\times \R}, \qquad \lim_{\eps\to 0}\ol u_\eps(t,\cdot) = \chi_{[Ct,\infty)\times \R}
\]
in $L^2_{loc}(\R)$ for all $t>0$. In particular, the gradient flow is slow of some order between
\[
\eps^{1/2}\,|\log\eps|^{1/2} \leq c_\eps \leq \eps^{1/2}\,|\log\eps|^{3/2}
\]
in the line-tension scaling $d_\eps \sim \eps^{1/2}|\log\eps|^{-1/2}$.
\end{theorem}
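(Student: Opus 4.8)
The plan is to transplant the one–dimensional construction of Theorem~\ref{theorem slowness line} to the plane, working throughout in the blow–up variables $\xi=x/\eps$, $\tau=t/(c_\eps\eps^2|\log\eps|)$, in which the equation becomes the unrescaled flow $v_\tau=\A v-W'(v)$ on $\R^2\setminus\bigcup_i\overline{B_1}$ with obstacles on the lattice $(d_\eps/\eps)\,\Z^2$; one constructs a sub-solution and a super-solution separately. The building blocks are the radial obstacle cell $\ubar$ of Lemma~\ref{lemma sub-solution 2d} (replacing the circle cell of Lemma~\ref{lemma obstacle sub-solution}), the planar constrained minimiser of Lemma~\ref{lemma minimiser 2d}, the optimal profile $\phi$, and the refined modification of Lemma~\ref{lemma modified interface}.

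\textbf{Sub-solution.} Fix an exponent $\zeta\in(1/2,1]$ (so that the cell radius stays below the lattice spacing) and a length $l_\eps\sim(d_\eps/\eps)^{3/2}|\log\eps|^{-1/2}$, which tends to infinity. Placing $\ubar$ from Lemma~\ref{lemma sub-solution 2d} at every lattice point, set
\[
\ul v(\tau,\xi)=\min\Big\{\,\widetilde\phi(\xi_1-\alpha\tau),\ \min_i\ubar(\xi-x_i)\,\Big\},
\]
with $\widetilde\phi=\widetilde\phi_L$ the modified profile of Lemma~\ref{lemma modified interface} of width $L\sim l_\eps$, chosen just below the common limit level of the cells, extended constant in $\xi_2$, and $\alpha\sim L^{-2}$. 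The decisive point is that now $L$ exceeds the lattice spacing $d_\eps/\eps$, so $\widetilde\phi$ always straddles many columns of cells and---in contrast with one dimension---no instantaneous jumps over obstacles are needed: since $\widetilde\phi$ is increasing and the cells are time-independent, $\ul v$ is continuous and pointwise nonincreasing in $\tau$, and it is a viscosity sub-solution because each factor is one on all of $\R^2$. The sub-solution property of $\ubar(\xi-x_i)$ uses precisely that Lemma~\ref{lemma sub-solution 2d} gives $\A\ubar-W'(\ubar)\ge l_\eps^{-1}+F\,l_\eps^{-(1+\zeta)}>0$ together with $\ubar\equiv0$ on $B_1$, and that of $\widetilde\phi(\xi_1-\alpha\tau)$ uses that the planar half-Laplacian of a function of $\xi_1$ alone is a multiple of the one-dimensional one, together with the bounds $\A\widetilde\phi-W'(\widetilde\phi)\ge-C/L^2$ in the transition layer and $\ge\bar c/L^2$ in the tails from Lemma~\ref{lemma modified interface}, which force $\alpha\sim L^{-2}$ but allow nothing smaller.

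\textbf{Limit, super-solution and conclusion.} Undoing the blow-up with $c_\eps^-=|\log\eps|\,c_\eps^+=\eps^2/d_\eps^3$, the obstacle-induced defects shrink to zero and $\ul u_\eps(t,\cdot)\to\chi_{[ct,\infty)\times\R}$ in $L^2_{loc}$ for some $c>0$; a larger time factor would freeze the front and a smaller one would send it to infinity, which fixes the order. For the super-solution I would work on the macroscopic scale with
\[
\ol u_\eps(t,x)=\min\Big\{\,\phi\big((x_1-\alpha^+t)/\eps\big),\ \min_i u_\ast\big((x-x_{i,\eps})/\eps\big)\,\Big\},
\]
where $u_\ast$ is the radial minimiser of Lemma~\ref{lemma minimiser 2d}: it is a super-solution near the obstacles and at the gluing locus because $\min_i u_\ast$ is, and at the moving front because the downward pull $\frac1{|\log\eps|}\A\big(\ol u_\eps-\phi(\cdot/\eps)\big)$ of the minimisers---of size comparable to $\eps^2/(d_\eps^3|\log\eps|)=c_\eps^+$ thanks to the sharp decay $1-c|x|^{-3}\le u_\ast<1$ of Lemma~\ref{lemma minimiser 2d}---dominates the $O(c_\eps^+)$ push of the translation, so $\alpha^+$ may be taken a fixed constant and $\ol u_\eps(t,\cdot)\to\chi_{[Ct,\infty)\times\R}$. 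Substituting $d_\eps\sim(\eps/|\log\eps|)^{1/2}$ gives $c_\eps^+\sim\eps^{1/2}|\log\eps|^{1/2}$ and $c_\eps^-\sim\eps^{1/2}|\log\eps|^{3/2}$, and the gradient flow of $\E_\eps$ is then slow of some order in the window $\eps^{1/2}|\log\eps|^{1/2}\le c_\eps\le\eps^{1/2}|\log\eps|^{3/2}$, by the comparison principle for the viscous fractional equation (Perron's method, as in the remark following Theorem~\ref{theorem slowness line}).

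\textbf{Main obstacle.} The hard part is not the assembly but making the scalings close up simultaneously: $l_\eps$ and $\zeta$ must be picked so that the cells of Lemma~\ref{lemma sub-solution 2d} stay disjoint, the transition width $L\sim l_\eps$ exceeds $d_\eps/\eps$, and the minimal admissible speed $\alpha\sim L^{-2}$, pushed through the blow-up and the $c_\eps^-$-rescaling, comes out of order one; and the super-solution scaling $c_\eps^+$ relies on the sharp $|x|^{-3}$ decay of the planar obstacle minimiser rather than on a naive bound, which is why Lemmas~\ref{lemma sub-solution 2d} and~\ref{lemma minimiser 2d} are stated with that decay. The factor $|\log\eps|$ between $c_\eps^-$ and $c_\eps^+$ is exactly the amount by which this essentially one-dimensional sub-solution over-estimates the pinning friction, compared with the sharp back-of-the-envelope estimate of Section~\ref{section heuristic} that the super-solution realises.
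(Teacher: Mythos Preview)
Your super-solution is the right idea and matches the paper: taking the minimum of the moving profile $\phi((x_1-\alpha^+t)/\eps)$ with the rescaled radial minimisers $u_\ast$ from Lemma~\ref{lemma minimiser 2d} is exactly what the paper does, and the $|x|^{-3}$ decay is indeed what produces the sharper scale $c_\eps^+=\eps^2/(d_\eps^3|\log\eps|)$.

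The sub-solution, however, has a genuine gap. The minimum of two viscosity sub-solutions is \emph{not} a sub-solution; it is the supremum that preserves the sub-solution property (dually, the infimum preserves super-solutions). Concretely, at a point $x$ with $\ul v(\tau,x)=\ubar(x-x_j)<\widetilde\phi(x_1-\alpha\tau)$ near an obstacle $x_j$ sitting in the right part of the transition zone, you need $0\le \A(\ul v)(x)-W'(\ubar)$, and the buffer $1/l_\eps$ from Lemma~\ref{lemma sub-solution 2d} has to absorb the loss
\[
\A\ubar(x)-\A(\ul v)(x)=\int_{\{\ul v<\ubar\}}\frac{\ubar(y-x_j)-\ul v(y)}{|x-y|^3}\,dy.
\]
But this set contains the whole left part of the transition zone and the left half-plane, where $\ubar\approx 1-1/l_\eps$ while $\ul v=\widetilde\phi$ is small; since that region begins at distance $O(L)$ from $x$, the integral is of order $1/L\sim 1/l_\eps$, i.e.\ the \emph{same} order as the buffer. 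So the inequality cannot be closed by choosing constants, and the construction fails precisely at obstacles inside the transition layer---which, as you correctly observe, is now wider than the lattice spacing and therefore always contains such obstacles.

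The paper does not take a minimum for the sub-solution. Instead it performs a two-stage modification of the profile: first it bends $\phi$ to plateau on a width $L=l^{3/2}$ with carefully chosen logarithmic corrections (the ``First modification''), then it \emph{flattens $\widetilde\phi$ locally on small discs around each obstacle site} via bump functions $\eta$ (the ``Second modification''), and only then glues in a cell from Lemma~\ref{lemma sub-solution 2d} at each site, with the cell height matched to the \emph{local} value $\widetilde\phi(x_{j,1})$ rather than to a common level near $1$. This local height-matching is exactly what prevents the $O(1/L)$ interaction between the interface and the cells from destroying the sub-solution property, and it is the step your minimum construction misses. Obstacles on the left side of the transition (near phase $0$) require yet another trick: a rescaled cell built from an auxiliary potential, since there the interface pulls upward rather than down. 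These refinements are what generate the extra $\log l$ in the interface speed $\lambda\sim l^{-3}\log l$, and hence the $|\log\eps|$ gap between $c_\eps^-$ and $c_\eps^+$.
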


Here, we miss the optimal order by a logarithmic term as the sub-solution moves on a faster time-scale than the super-solution. This discrepancy is due to our use of a radially extended function rather than a fully two-dimensional construction. The two-dimensional growth rate is observed in Lemma \ref{lemma minimiser 2d}, and we expect the super-solution to give the right order of movement rather than the sub-solution -- see Remark \ref{remark decay 2D} for the difficulties related to constructing sub-solutions directly in two dimensions.

\begin{proof}[Proof of Theorem \ref{theorem 2d slow}]
Like in the proof of Theorem \ref{theorem slowness line}, we begin by constructing sub-solutions in a blow-up scale.

{\bf First modification.}
Like in the one-dimensional case, denote $l = l_\eps = d_\eps/(\eps N)$ for suitably large $N\in\N$. Take $f_L$ like in Lemma \ref{lemma modified interface}, but this time for $L = l^{3/2}$. Choose $g\in C^\infty(\R)$ such that
\[
0\leq g \leq 1, \qquad g(t) = \begin{cases}1 &|t|\geq 2l\\ 0 &|t|\leq l\end{cases}, \qquad |g'|\leq \frac{2}l, \qquad |g''|\leq \frac4{l^2}
\]
and $c_1, c_2>0$ to be specified later. Set 
\[
\wt \phi(x) = f_L\circ\phi(x_1) - \frac{c_1\,\log(l)}{l^2}\,g(x_1) - \frac{c_2\,\log(l)}{l^3}
\]
and compute
\begin{align*}
\left|\A (\wt \phi -\phi)(x)\right| &\leq \left|\A(f_L\circ\phi - \phi)(x)\right| + \left|\frac{c_1\,\log l}{l^2}\,\A g(x)\right|\\ 
	&\leq \frac{C}{L^2} + \frac{c_1\log(l)}{l^2}\left(\int_{B_l(x)}\frac{||\,D^2g||}{|y-x|}\dy + \int_{\R^2\setminus B_l(x)}\frac{2\,||g||}{|x-y|^3}\dy\right)\\
	&\leq \frac{C}{l^3} + \frac{c_1\log(l)}{l^2}\left(\frac{4}{l^2}\int_0^l\frac1r\,(2\pi r)\dr + 2\int_l^\infty \frac1{r^3}\,(2\pi r)\dr\right)\\
	&\leq \frac{C\,\log(l)}{l^3}.
\end{align*}
like in Lemma \ref{lemma modified interface} because $L^2 = l^3$. Thus $\wt \phi$ satisfies
\begin{align*}
\A\wt \phi - W'(\wt\phi) &= \A(\wt\phi - \phi) + \A \phi - W'(\phi) + \left[W'(\phi) - W'(\wt\phi)\right]\\
	&= \A(\wt\phi - \phi)+ \left[W'(\phi) - W'(\wt\phi)\right]\\
	&\geq \begin{cases} \frac{- c_Wc_2\,\log l}{l^3} & \wt\phi(x) \in [\beta,1-\beta]\\ 
		\frac{\tilde c_W\,c_2\,\log(l)}{l^3} & \wt\phi(x)\in (0,\beta] \text{ or } \wt\phi(x)\in [1-\beta, 1]\\
		\frac{\tilde c_W\,c_1\,\log(l)}{l^2}  & |x|\geq 2l.
	  \end{cases}
\end{align*}
All constants are positive and depend only on $W''$. 

{\bf Second modification.} Let $y_{i,j,\eps} = Nl\,(i,j)$ be an enumeration of the pinning sites after rescaling with $i,j\in\Z$. For pinning sites with $Nli \leq l^{3/2}+l$, we need an additional modification to flatten the interface before we can insert obstacles. 

Choose $1/2 < \gamma < 1$ and a bump-function $\eta$ such that
\[
0\leq \eta\leq 1, \qquad\eta(x) = \begin{cases}1& |x|\leq l^\gamma\\ 0 &|x|\geq 2l^\gamma \end{cases}, \qquad |\nabla \eta|\leq \frac2{l^\gamma}, \qquad|D^2\eta|\leq \frac4{l^{2\gamma}}
\]
and set
\[
\tilde u_\lambda (x) = \left(1 - \sum_{|i| \leq \frac{2\,l^{1/2}}N} \:\sum_{j\in\Z} \eta(x - y_{i,j,\eps})\right)\,\wt\phi(x_1 + \lambda l) + \sum_{|i|\leq 2 \sqrt{l}/N,\: j\in\Z} U_{i,\eps}\,\eta(x - y_{i,j,\eps})
\]
with
\[
U_{i,\eps} = \wt\phi(\,(Ni -\lambda)l) + O(l^{\gamma-2})
\]
for a small term $O(l^{\gamma-2})$ to be chosen later. This is a function which mostly looks like (a translated version of) $\wt\phi$, but is flattened at the pinning sites. The parameter $\lambda$ will later be used for the time-evolution. Note that 
\[
|\wt\phi(x) - U_{i,\eps}| = \frac1{W''(0)} \left|\frac1{x} - \frac1{Nil-\lambda}\right| + O(l^{-2}) + O(l^{\gamma-2})  \leq C\, l^{\gamma -2}
\] 
where $\eta(x- y_{i,j,\eps}) \neq 0$, so when we set
\begin{align*}
\eta_{i,j}(x) &= \eta(x- y_{i,j,\eps}),\\
w &\coloneqq \tilde u_\lambda - \wt \phi\\
	&= \sum_{i,j} \left(U_{i,\eps} - \wt \phi (x_1 + \lambda l)\right)\,\eta_{i,j}
\end{align*}
 we observe that 
\begin{align*}
\nabla w (x) &= \sum_{i,j}\left(U_{i,\eps} - \wt \phi\right)\,\nabla \eta_{i,j} - \wt \phi'\, \eta_{i,j} \,e_1,\\
D^2 w(x) &= \sum_{i,j} \left(U_{i,\eps} - \wt \phi)\right)\,D^2 \eta_{i,j} - \wt \phi' \, (e_1\otimes \nabla \eta_{i,j} + \nabla \eta_{i,j} \otimes e_1) - \wt\phi''\, \eta_{i,j}\,e_1\otimes e_1\\
	&= O\left(l^{-(2+\gamma)}\right).
\end{align*}
Therefore, the usual argument shows that
\begin{align*}
\left|\A w(x)\right| &\leq  \int_{B_{l^\gamma}(x)}\frac{||D^2w||}{|x-y|}\dy + \int_{\R^2\setminus B_{l^\gamma}(x)}\frac{||w||}{|x-y|^3}\dy = O(l^{-2})
\end{align*}
on $\R^2$. This estimate can be improved if we are far from the next pinning site. Namely, assume that $\min_{i,j}|x-y_{i,j,\eps}|\geq 2l$, then the first term vanishes and the sharper estimate
\begin{align*}
\left|\A w(x)\right| &
	\leq \sum_{i,j} \frac{C}{|x- y_{i,j,\eps}|^3} \int_{B_{l^\gamma}(y_{i,j,\eps})} 2\,||w||\dy\\
	&\leq \sum_{i,j} \frac{C\,l^{2\gamma}\,l^{\gamma -2}}{|x-y_{i,j,\eps}|^3}\\
	&= O(l^{3\gamma - 5})
\end{align*}
holds. From now on, take $\gamma=2/3$, so that $\A w= O(l^{-3})$. Note that this holds true for all $x$ with $|x|\leq 2l$.

{\bf Inserting Obstacles: Right Half-Space.} We now insert the obstacle sub-solutions from Lemma \ref{lemma sub-solution 2d} into the flattened out sites and on the half-spaces that are flattened out by $f_L$. First we deal with the right hand side of the interface where $\phi$ is close to $1$. 

We concentrate on the obstacles in the flattened discs since the flattened half-plane can be treated similarly. The height of the obstacle at $y_{i,j,\eps}$ is $1- \frac{1}{W''(0)\,(Nil -\lambda)} - O((Nil)^{\gamma-2})$ for $j\leq 2l^{1/2}/N$ which is 
\[
1- \frac{1}{W''(0)\,l} - O(l^{-4/3}) = 1- \frac{1}{W''(0)\,(Nil)}  - \frac {F}{(Nil)^{4/3}} + o(l^{-4/3})
\]
for some bounded sequence $F = F_j\in\R$, so we choose $\zeta=1/3$ in Lemma \ref{lemma sub-solution 2d}. A lower order perturbation in either term gives a matching height between the interface and the inserted obstacle so that we can glue the obstacle into the modified domain. The sub-solution for an obstacle on $\R^2$ is constant for arguments $|x|\geq l^\zeta = l^{1/3}$ and the flattened out disc in the obstacle has a radius of $l^{2/3}$, so the glueing does not cause more problems. The resulting function is denoted by $u$.

Let us check that the sub-solution condition is still satisfied at the obstacles we just inserted. An ideal interface would exert a pressure of
\begin{align*}
\A\phi(Nil) &= W'(\phi(Nil))\\
	&= W''(0)\,(1-\phi(Nil)) + O((Nil)^{-2})\\
	&= W''(0) \cdot\frac{1}{W''(0)\,Nil} + O((Nil)^{-2})\\
	&= \frac1 {Nil} + O((Nil)^{-2})
\end{align*}
at $y_{i,j,\eps}$ which is compensated by the obstacle by construction. The same is true up to order $O(l^{\gamma-2}l^{-\gamma}) = O(l^{-2})$ for the modified interface $\tilde u_\lambda$, which is also compensated. The obstacle $y_{i', j', \eps}$ has a distance of $Nl\,\sqrt{(i-i')^2 + (j-j')^2}$ to $y_{i,j,\eps}$, so their contribution to the pressure is negligible. Namely, the pressure created by the obstacles at a point $x$ is
\begin{align*}
\sum_{i,j} \int_{B_l^{1/3}(y_{i,j,\eps})}\frac{u(y) - \tilde u_\lambda(y)}{|y -x|^3}\dy 
	&\leq \sum_{i,j} \left[\int_{B_1(y_{i,j,\eps}}\frac{1}{|y -x|^3}\dy + \int_{B_{l^{1/3}}(y_{i,j,\eps})\setminus B_1} \frac{\frac1{|y-y_{i,j,\eps}|^2}}{|y-x|^3}\dy\right]\\
	&= O\left(  \frac{1+ \log(l)}{\dist(x, Nl\Z)^3}\right)\\
	&= \begin{cases} O(l^{-2}\log(l)) &\text{if }\dist(x, Nl\Z)\geq l^{2/3}\\ O(l^{-3}\log(l)) & \text{if }\dist(x, Nl\Z)\geq l
	 \end{cases}
\end{align*}
which is compensated either by a sufficiently large constant $c_1$ close to the obstacles or by $c_2$ away from the obstacles. At the interface, it can be compensated by a speed $O(l^{-3}\log(l))$.

{\bf Inserting Obstacles: Left Half-Space.} Since the interface could only be modified for $|x|\geq L\geq l^{3/2}$, we also need to insert obstacles into the flattened out discs in the left half space in two dimensions. 

Being close to phase $\{u\approx 0\}$, the construction of a stationary obstacle sub-solution does not go through. 
Instead we can take a sub-solution of the periodic obstacle problem at phase $\{u\approx 1\}$ for $F =0$, an auxiliary double-well potential $\wt W$ and multiply it by a factor
\[
h_l \sim \frac{1}{W''(0)\,l}
\]
which allows for continuous glueing. Here, the interface pulls upwards with force $\sim \frac1l$, while the self-pressure of the obstacle is
\begin{align*}
\A (h_l u) - \wt W'(h_l u) &= h_l\,\A u - \wt W'(h_lu)\\
	&= h_l\wt W'(u) - \wt W'(h_lu)\\
	&\leq \frac{C_{\wt W}}{l}.
\end{align*}
When we choose $\wt W$ suitably, $C_{\wt W}$ can be made as small as we need. Thus the upwards pull of the interface compensates the self-pressure. 

{\bf Conclusion and Rescaling.} Overall, the calculations show that a function as constructed above is a sub-solution if the interface moves with a speed $\lambda = O(\log(l)\,l^{-3})$. Again, the suitable monotonicity of $\wt\phi$ and the precise construction ensure the sub-solution property at non-smooth times. When passing to the macroscopic scale as
\[
\ul u_\eps(t,x) = u\left(\frac{t}{\eps^2|\log\eps|\,c_\eps}, \frac x\eps\right)
\]
we observe that the interface moves a distance $d_\eps \sim \eps l_\eps$ in a time $t_\eps$ proportional to the product of the re-scaling factor with the quotient of the travelled distance $l_\eps$ in the blow-up scale and the speed $l_\eps^{-3}\,\log(l_\eps)$ in the blow-up scale, i.e.
\[
t_\eps \sim   \eps^2|\log\eps|\,c_\eps\cdot \frac{d_\eps}{\eps}\left(\frac{\eps^3\, |\log\eps|}{d_\eps^3}\right)^{-1} = \frac{c_\eps\,d_\eps^4}{\eps^2}.
\]
To obtain a uniform speed on the order $O(1)$ we choose $t_\eps\sim d_\eps$ or equivalently
\[
c_\eps^- = \frac{\eps^2}{d_\eps^3}.
\]

{\bf Super-solutions.} Super-solutions are constructed in analogy to the one-dimensional case. The growth rate $1- \frac{c}{|x|^3}$ from Lemma \ref{lemma minimiser 2d} leads to the fact that the logarithmic term in the integral is not present in the super-solution. A simple calculation shows that super-solutions move on the slower time-scale
\[
c_\eps^+ = \frac{\eps^2}{d_\eps^3\,|\log\eps|}.
\]
\end{proof}

\begin{remark} A similar argument can be made when the interface is not perfectly aligned with the grid. Take the grid $\Gamma_\eps = S(\phi)\cdot \left(d_\eps\cdot \Z^2\right)$ where $S(\phi)$ is the rotation matrix
\[
S(\phi) = \begin{pmatrix}\cos \phi &\sin \phi \\ - \sin\phi &\cos\phi\end{pmatrix}.
\]
If we take $\phi$ such that $\tan(\phi)\in \mathbb{Q}$, then the first component $z_1$ of $z\in \Gamma_\eps$ is
\[
z_1 = d_\eps\,(n\cos\phi + m \sin\phi) = d_\eps\cos\phi\,(n + m\tan\phi)
\]
for some $n,m\in\Z$. Since we assumed $\tan\phi$ to be rational, this is a discrete periodic subset of the real line and the distance between any two points is proportional to $d_\eps$. Also the fractional Laplacian can be estimated as before, so Theorem \ref{theorem 2d slow} also holds for rotated square grids. 

Equally well, we could rotate the interface instead of the grid. This resembles the settings of \cite{MR2231253,MR2463225} where a straight front in a periodic medium is considered for sharp interface mean curvature flow and for a local Allen-Cahn equation. Our setting differs in the use of a non-local differential operator and in that the obstacles are comparable to the size of the interface, but the distances between them lie on a much larger scale. 
\end{remark}

\subsection{Dynamics on a Torus}

Finally we state the main result as applied to the case of \cite{MR2231783}. Note that the inclusion of a constant force $f$ in the energy is a compact perturbation, so 
\[
\wt \E_\eps(u) \coloneqq \E_\eps(u) - \int_{\T^2}fu\dx \qquad\xrightarrow{\Gamma(L^2)}\qquad \E(u) - \int_{\T^2}fu\dx
\]
for any constant $f\in \R$. 

\begin{theorem}\label{theorem 2d torus}
Denote by $\T_a^2 = \R^2/\left(a\cdot \Z^2\right)$ the flat square torus with volume $A=a^2$. Consider the evolution equation
\[
\begin{pde}
c_\eps\,\eps u_t & = \frac1{|\log\eps|}\left(\A u - \frac1\eps\, W'(u)\right)  + f& \text{in }[0,\infty)\times \left[ \T^2_a\setminus \bigcup_{k=1}^{N_\eps} \overline{B_\eps(x_{i,\eps})}\right]\\
u &=0&\text{on }(0,\infty)\times \bigcup_{k=1}^{N_\eps} \overline{B_\eps(x_{i,\eps})}\\
u &= u_\eps^0&\text{at }t=0
\end{pde}
\]
where the pinning sites $x_{i,\eps}$ satisfy the assumptions of Theorem \ref{theorem garroni mueller} and additionally
\begin{enumerate}
\item the distribution assumption
\[
\{x_{1,\eps}, \dots, x_{N_\eps,\eps}\} \subset \bigcup_{j,k=1}^{\sqrt{\frac{|\log\eps|}{\eps}}} B_{r_\eps}\left(d_\eps\cdot(j,k)\right).
\]
for $r_\eps\ll d_\eps = \sqrt{\frac{\eps}{|\log\eps|}}$ (i.e.\ the pinning sites lie in small discs around grid points) and
\item the number of obstacles per disk is uniformly bounded:
\[
\# \left(\{x_{1,\eps}, \dots, x_{N_\eps,\eps}\} \cap B_{r_\eps}\left(d_\eps\cdot(j,k)\right)\right) \leq M
\]
for some $M\in\N$ independently of $\eps, j, k$.
\end{enumerate}
Then we find $u_\eps^0\to \chi_{[-r/2,r/2]\times[0,a]} =:u$ in $L^2(\T^2_a)$ (a strip of width $r$ around the torus)  such that $\E_\eps(u_\eps^0)\to \E(u)$ and the following hold:
\begin{enumerate}
\item[(i)] If $f =0$ and $r>a/2$, then $u_\eps(t,\cdot) \to u$ in $L^2(\T^2_a)$ independently of $c_\eps\to 0$.

\item[(ii)] If $f = 0$ and $r<a/2$, then $u_\eps(t,\cdot) \to \chi_{[-r(t), r(t)]\times[0,a]}$ in $L^2(\T^2_a)$ for $c_\eps = |\log\eps|^{-1}$ where
\[
\dot r = - \frac1{2r} + 2 \sum_{n=1}^\infty \frac{2r}{(nR)^2 - 4r^2}, \qquad r(0)  = r/2.
\]

\item[(iii)] If $0 < f < f_0$ for some $f_0$ depending only on the capacity $\alpha$ of dislocations (i.e.\ the potential $W$) and the limiting density $\Lambda = \lim_{\eps\to 0}\frac{\eps}{|\log\eps|}N_\eps\in (0,\infty)$, then $u_\eps(t,\cdot) \to u$ in $L^2(\T^2_a)$ independently of $c_\eps\to 0$. (This is valid also for $f_\eps>0$ if $f_\eps \gg |\log\eps|^{-1}$.)

\item[(iv)] If $f<0$, then $u_\eps(t,\cdot)\to \chi_{[-r/2 +|f|t, r/2 - |f|t]\times[0,a]}$ for $c_\eps \equiv 1$. 
\end{enumerate}
If $W\in C^4(\R)$ and $W^{(3)}(0) = 0$ (e.g.\ if $W' = \sin$), then we can generalise the distribution assumptions as follows.

\begin{enumerate}
\item[(1')] There exist $d_\eps'\gg \eps^{2/3} |\log\eps|^{1/3}$ and $r_\eps \ll d_\eps'$ such that 
\[
\{x_{1,\eps}, \dots, x_{N_\eps,\eps}\} \subset \bigcup_{j,k=1}^{1/d_\eps'} B_{r_\eps}\left(d_\eps'\cdot(j,k)\right).
\]
\item[(2')] the number of obstacles per disk is uniformly bounded:
\[
\# \left(\{x_{1,\eps}, \dots, x_{N_\eps,\eps}\} \cap B_{r_\eps}\left(d_\eps'\cdot(j,k)\right)\right) \leq M
\]
for some $M\in\N$ independently of $\eps, j, k$.
\end{enumerate}
\end{theorem}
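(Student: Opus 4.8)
\textbf{Overall strategy.} The plan is to trap the weak (or viscosity) solution $u_\eps$, in each of the four cases, between a sub-solution $\ul u_\eps$ and a super-solution $\ol u_\eps$ of the evolution equation, to show that both barriers share the same $L^2(\T^2_a)$-limit, and to conclude via the comparison principle. Since the initial datum is a straight strip of width $r$ around the torus, aligned with the grid, I would take all barriers independent of the coordinate along the strip, so that the fractional Laplacian in the normal variable reduces to the one-dimensional operator on $S^1_a$; the one-dimensional barriers of Section \ref{section 1d} and the refined two-dimensional interface modification from the proof of Theorem \ref{theorem 2d slow} then apply directly (a tilted strip, with $\tan\phi\in\mathbb Q$, is handled by the rotation argument of the remark after Theorem \ref{theorem 2d slow}). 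The four cases split into two mechanisms: an attractive/forced one acting on an explicit time-scale, where the obstacle friction turns out negligible (cases (ii), (iv)), and an energy-barrier one, where the obstacles act as friction and the $\Gamma$-limit of Theorem \ref{theorem garroni mueller} is invoked (cases (i), (iii)).

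\textbf{Cases (ii) and (iv): moving interfaces.} For (ii) ($f=0$, $r<a/2$, $c_\eps=|\log\eps|^{-1}$) I would periodise around $S^1_a$ the kink/anti-kink sub- and super-solutions of Theorem \ref{theorem step circle}(1), using the cancellation obtained by pairing each kink with its anti-kink to sum the infinitely many interaction forces, and glue into the flattened pinning discs the radially extended obstacle sub-solutions of Lemma \ref{lemma sub-solution 2d} (for the super-solution, take the minimum of the profile series with the obstacle super-solutions), exactly as in the proof of Theorem \ref{theorem 2d slow}. With $d_\eps\sim\sqrt{\eps/|\log\eps|}$ the obstacles contribute a force of order $\eps^2/(d_\eps^3|\log\eps|)=O(\eps^{1/2}|\log\eps|^{1/2})=o(|\log\eps|^{-1})$ on an interface, hence negligible against the kink/anti-kink attraction, which acts on the scale $c_\eps=|\log\eps|^{-1}$; both barriers then converge in $L^2(\T^2_a)$ to $\chi_{[-r(t),r(t)]\times[0,a]}$ with $r$ solving the stated ODE, the series being the periodic sum of pairwise kink/anti-kink forces on $S^1_a$. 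For (iv) ($f<0$, $c_\eps\equiv1$) the force enters on the unrescaled gradient-flow scale, so both the $|\log\eps|^{-1}$-small attraction and the still smaller obstacle friction vanish in the limit, and as in Theorem \ref{theorem external circle}(1) the two interfaces approach each other with speed $|f|$, giving $\chi_{[-r/2+|f|t,\,r/2-|f|t]\times[0,a]}$ until the strip is exhausted.

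\textbf{Cases (i) and (iii): pinned interfaces via energy methods.} Here expanding $\{u=1\}$ raises the energy, so I would argue as in Theorem \ref{theorem step circle}(2) and Theorem \ref{theorem external circle}(2). First, a \emph{stationary} sub-solution sitting at the strip is available: in (i) the kink/anti-kink attraction and in (iii) the external force both act to expand $\{u=1\}$ (the inserted obstacle sub-solutions only reinforcing this), so by comparison $[-r,r]\times[0,a]\subset\{\lim u_\eps(t,\cdot)=1\}$ for all $t>0$, independently of the vanishing acceleration $c_\eps$. Second, since the flow dissipates energy, $\wt\E_\eps(u_\eps(t,\cdot))\le\wt\E_\eps(u_\eps^0)\to\wt\E(u)$ with $\wt\E_\eps(v)=\E_\eps(v)-\int_{\T^2_a}fv\dx$; passing to a subsequential $L^2$-limit $\chi_{E(t)}$ and using the $\Gamma$-$\liminf$ inequality gives $\wt\E(\chi_{E(t)})\le\wt\E(u)$. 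If $[-r,r]\times[0,a]\subsetneq E(t)\subsetneq\T^2_a$, this fails: in (i) such a set has strictly larger perimeter or, being a wider strip, strictly larger area, hence strictly larger $\E$; in (iii) enlarging the strip by area $\delta A>0$ leaves the perimeter $=2a$ unchanged but raises the bulk-minus-force term by $(\Lambda\,\alpha(1)-f)\,\delta A>0$ once $f<f_0:=\Lambda\,\alpha(1)$. The degenerate case $E(t)=\T^2_a$ is ruled out as in Theorem \ref{theorem step circle}(2), using $L^2$-continuity in $t$ to replace $c_\eps$ by a slightly different acceleration $\tilde c_\eps$ for which the mass of $u_\eps(t,\cdot)$ stays bounded away from $a^2$; and since $f_\eps\gg|\log\eps|^{-1}$ makes the force dominate the attraction on every time-scale, this also covers that case.

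\textbf{Technical reductions and the main obstacle.} For the distribution assumptions (1)--(2) I would replace, in Lemmata \ref{lemma obstacle sub-solution} and \ref{lemma sub-solution 2d}, the single disc $B_\eps$ by the union of the at most $M$ discs $B_\eps(x_{i,\eps})$ contained in $B_{r_\eps}(d_\eps\cdot(j,k))$; since $r_\eps\ll d_\eps$ this union still sits well inside one cell, its capacity is that of a single disc up to a constant depending only on $M$, and the constrained minimisation and decay estimates carry over with vanishing radius $R$ of order $r_\eps/\eps$ (a crude bound suffices, as $r_\eps/\eps=o(d_\eps/\eps)$), while vacancies only shrink the pinning set. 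For (1$'$)--(2$'$), the hypotheses $W\in C^4$ and $W^{(3)}(0)=0$ improve the profile asymptotics to $\phi(x)=1-\tfrac1{W''(0)\,x}+O(x^{-3})$ at $\pm\infty$ and control $\psi,\psi''$; this sharper decay is precisely what allows the two-stage interface modification of the proof of Theorem \ref{theorem 2d slow} to absorb its error terms when the obstacle spacing is only $d_\eps'\gg\eps^{2/3}|\log\eps|^{1/3}$ rather than $\sim\sqrt{\eps/|\log\eps|}$. The hard part, present already in case (ii), is the sub-solution: one must flatten the periodised profile near each of the many pinning discs crossed by the interface, glue in the radially extended cell-solution of Lemma \ref{lemma sub-solution 2d} without ever violating $c_\eps\eps\,u_t\le\frac1{|\log\eps|}(\A u-\tfrac1\eps W'(u))$ --- in particular at the non-smooth times when an interface jumps over an obstacle, which forces the use of the monotonicity of $\widetilde\phi$ and of $\phi\pm\eps\psi$ --- and simultaneously keep the induced friction strictly below the $|\log\eps|^{-1}$-scale of the attraction, which is what makes the careful $l^{-3}\log l$/$l^{-2}$ error bookkeeping of the proof of Theorem \ref{theorem 2d slow}, together with the periodic cancellation of kink/anti-kink forces on the torus, indispensable. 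Everything else reduces, via the strip geometry, to Section \ref{section 1d} and to Theorem \ref{theorem garroni mueller}.
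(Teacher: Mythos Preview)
Your proposal is correct and follows essentially the same approach as the paper: the paper's own proof is a terse sketch describing the result as the combination of the one-dimensional arguments (Theorems \ref{theorem step circle} and \ref{theorem external circle}) with the two-dimensional interface modification from the proof of Theorem \ref{theorem 2d slow}, together with the observation that the corrector $\eps\psi$ is a lower-order perturbation at the nearest pinning site, and your outline fleshes out precisely this combination, including the construction of the well-prepared initial datum as a minimum of profile and cell-problem solutions. One small correction on the last point: under the extra hypothesis $W^{(3)}(0)=0$ the paper attributes the improved decay to the \emph{corrector} $\psi$ (which then decays like $x^{-2}$ at $\pm\infty$ rather than $x^{-1}$), not to a sharper remainder in the asymptotics of $\phi$; it is this faster decay of $\psi$ that lets the obstacle--interface glueing go through at the coarser spacing $d_\eps'\gg\eps^{2/3}|\log\eps|^{1/3}$.
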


The proof is a combination of the analogue statement in one dimension and the more subtle modification of the interface in two dimensions described above with a few additional facets:

\begin{enumerate}
\item Note that 
\[
|\psi'(x)|\leq \frac{C}{1+|x|^2} \qquad \Ra \qquad |\psi(x)|\leq \frac{C}{1+ |x|},
\]
so at the nearest obstacle where we need to modify we use $l_\eps \sim d_\eps/\eps \sim (\eps\,|\log\eps|)^{-1/2}$ to calculate
\[
\eps \psi\left(l_\eps\right) = O(\eps^{3/2}\,|\log\eps|^{1/2}) \ll \left(\eps\,|\log\eps|\right)^{3/2} = l_\eps^{3}.
\]
This allows us to carry out the same modifications as before without paying much attention to the corrector, which is a lower order perturbation only at the closest pinning site. We have enough wiggle room to come closer to the pinning sites and jump shorter by a logarithmic term, so again we can argue that neither the contracting force nor the jumps matter in the limit.

If $W^{(3)}(0) = 0$, then we can show that $\psi$ decays as $x^{-2}$ at $\pm\infty$, not just as $x^{-1}$, thus we can come closer to the corrected interface with the obstacles without having to take care of bigger complications in the modification process. 

If we could improve the order at which the sub-solution moves to the order of the super-solution, it would suffice to require $d_\eps\gg \eps^{2/3}$, which is the order at which the bulk term induces logarithmically fast motion. 

\item When we denote by $u$ the solution to the cell-problem from Lemma \ref{lemma minimiser 2d} and by $x_{i,\eps}$ the pinning sites, we see that the initial condition
\[
u_\eps^0(x) = \min \left\{ \phi\left(\frac{x_1 + r/2}\eps\right), \phi\left(\frac{-x_1 - r/2}\eps\right), u\left(\frac{x-x_{i,\eps}}\eps\right)\right\}_{1\leq i \leq N_\eps}
\]
is trapped between the sub- and super-solution constructed before. Since all three components have converging energies, also $\E_\eps(u_\eps^0) \to \E(u^0)$.
\end{enumerate}

The first statement covers the case of obstacles located on a square grid, the second case allows for relatively general arrangements in a denser grid with many vacancies.

In particular, we see that in none of the four cases above we obtain the gradient flow of the the limiting energy as limit of the evolutions, which behaves as follows:

\begin{enumerate}
\item If $f=0$, the $\{u=1\}$-phase contracts with constant velocity stemming from the bulk energy term.
\item If $f<0$, the $\{u=1\}$-phase contracts with constant velocity stemming from both the bulk-energy term and the external force. Here, the behaviour is correct, but the velocity is governed only by the external force.
\item If $0<f<f_0$, the $\{u=1\}$-phase contracts with constant velocity stemming from the bulk-energy which dominates the small external force in the opposite direction. Here, in fact a small force $f_\eps$ already suffices to cause qualitatively wrong behaviour.
\end{enumerate}

\begin{remark}
Similarly as for the propagation of a front on the whole space, we can also have tilted grids. On a torus, we obtain a tilted grid by labelling equidistant points on the edges of a square and then connecting $i d_\eps$ on the bottom with $(i+k)d_\eps$ on the top (and periodically extended). The same result holds then, up to slight technical complications.
\end{remark}

Finally, we apply the Theorem to show that the limit of the pure gradient flows without external force in the usual fast time scale is not mean curvature flow. 

\begin{corollary}\label{corollary 2d general}
Under the same assumptions as Theorem \ref{theorem 2d torus} and the assumption that $\Lambda>\Lambda_0>0$ for a suitable $\Lambda_0$, there exists a sequence of initial conditions $u_\eps^0\to u = 1- \chi_{B_r(0)}$ for some small $r>0$ such that $\E_\eps(u_\eps)\to \E(u)$ and of solutions $u_\eps$ of
\[
\begin{cases}
\epsilon \,\partial_t u_\eps = \frac1{|\log\eps|}\left(\A u_\eps - \frac1\eps\,W'(u_\eps)\right) &\text{in }[0,\infty)\times \left[ \T^2\setminus \bigcup_{k=1}^{N_\eps} \overline{B_\eps(x_{i,\eps})}\right]\\
u=0&\text{on }(0,\infty)\times \bigcup_{k=1}^{N_\eps} \overline{B_\eps(x_{i,\eps})}\\
u= u_\eps^0&\text{at }t=0
\end{cases}
\]
such that $u_\eps(t,\cdot) \to 1- \chi_{E(t)}$ for some set $E(t)$ for all times $t$, but the boundaries $\partial E(t)$ are not moving by either mean curvature flow or the gradient flow of $\E$.
\end{corollary}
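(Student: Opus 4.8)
\emph{Strategy.} The plan is to prove the stronger statement that, for a suitable choice of $u^0_\eps$, one has $u_\eps(t,\cdot)\to u:=1-\chi_{B_r(0)}$ in $L^2(\T^2_a)$ for \emph{every} $t\ge 0$, i.e.\ the hole $B_r(0)$ stays completely stationary. This already proves the Corollary: a circle of radius $r$ is not stationary under mean curvature flow (it shrinks with $\dot r=-1/r$ and vanishes in finite time), and since $\E(1-\chi_{B_\rho(0)})=\Per(B_\rho)+\Lambda\alpha\,\H^2(\T^2_a\setminus B_\rho)=2\pi\rho+\Lambda\alpha(|\T^2_a|-\pi\rho^2)$ has $\tfrac{d}{d\rho}\big|_{\rho=r}=2\pi(1-\Lambda\alpha r)\neq 0$ whenever $r\neq 1/(\Lambda\alpha)$, a circle of radius $r$ is not a critical point of $\E$ either, hence not stationary for its gradient flow (indeed for $r>1/(\Lambda\alpha)$ that flow would \emph{enlarge} the hole, since then $\tfrac{d}{d\rho}\E<0$). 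Here $\alpha=\alpha(1)>0$ is the dislocation capacity from the cell problem \eqref{eq cell problem}. I would fix $\Lambda_0$ so large that $\Lambda>\Lambda_0$ forces $1/(\Lambda\alpha)<a/4$, and then choose any radius with $1/(\Lambda\alpha)<r<a/4$; this is the only use of $\Lambda>\Lambda_0$, and it encodes that the Garroni--M\"uller yield force $\Lambda\alpha$ beats the curvature $1/r$ of the hole. As initial datum take
\[
u^0_\eps(x)=\min\Big\{\phi\big(\tfrac{\sdist(x,\partial B_r(0))}{\eps}\big),\ \ubar\big(\tfrac{x-x_{i,\eps}}{\eps}\big)\ :\ 1\le i\le N_\eps\Big\},
\]
where $\phi$ is the optimal transition profile of Lemma \ref{lemma interface} (oriented so $\phi=1$ outside, $\phi=0$ inside $B_r(0)$) and $\ubar$ is the two-dimensional constrained minimiser of Lemma \ref{lemma minimiser 2d}; exactly as in the discussion after Theorem \ref{theorem 2d torus}, all three families have converging energies, so $u^0_\eps\to u$ in $L^2$ and $\E_\eps(u^0_\eps)\to\E(u)$. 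Along the gradient flow $\E_\eps(u_\eps(t,\cdot))$ is non-increasing, and by the compactness half of Theorem \ref{theorem garroni mueller} every subsequence of $u_\eps(t,\cdot)$ has a further subsequence converging in $L^2$ to some $1-\chi_{E(t)}$ with $E(t)$ of finite perimeter.

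\emph{The hole does not grow.} First I would show $E(t)\subseteq B_r(0)$ by constructing a \emph{stationary} viscosity sub-solution $\ul u_\eps\le u^0_\eps$ of $\A u-\tfrac1\eps W'(u)\ge 0$ with $\ul u_\eps\to u$. Take a radially symmetric, modified transition profile $P$ around $\partial B_{r+\delta_\eps}(0)$ ($\delta_\eps\downarrow 0$, which yields $\ul u_\eps\le u^0_\eps$), flattened near $|x|=r\pm\eps L_\eps$ and near each pinning site, with rescaled obstacle sub-solutions of Lemmas \ref{lemma modified interface} and \ref{lemma sub-solution 2d} glued in --- i.e.\ the two-dimensional construction from the proof of Theorem \ref{theorem 2d slow}, but now wrapped around a circle. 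The mechanism is that the (nonlocal) curvature of $\partial B_r$ produces a \emph{favourable} term, $\A P-\tfrac1\eps W'(P)\ge \tfrac{c}{\eps r}\,\phi'(\cdot/\eps)>0$ near the interface, of strictly larger order than both the $f_{L_\eps}$-modification errors and the downward force $O(\eps^2 d_\eps^{-3})$ that the obstacles exert there, while near each pinning site the inserted obstacle sub-solution carries its own positive self-pressure; hence $\ul u_\eps$ is a sub-solution and, by the comparison principle (as in the remark after Theorem \ref{theorem slowness line}), $u_\eps(t,\cdot)\ge\ul u_\eps$ for all $t$. Therefore every subsequential limit satisfies $1-\chi_{E(t)}\ge 1-\chi_{B_r(0)}$, i.e.\ $E(t)\subseteq B_r(0)$ and $|E(t)|\le\pi r^2$. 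Note that a pointwise \emph{super}-solution pinning the hole does not exist --- the obstacles cannot produce a downward force comparable to the $O(1)$ curvature of $\partial B_r$ --- so the opposite inequality must come from energy.

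\emph{The hole does not shrink.} Let $g(V):=2\sqrt{\pi V}+\Lambda\alpha(|\T^2_a|-V)$, the value obtained from \eqref{eq: limit energy} after bounding $\Per$ below by the isoperimetric inequality; $g$ has a strict maximum at $V_*:=\pi/(\Lambda\alpha)^2$ (the area of the disc of radius $1/(\Lambda\alpha)$), $\pi r^2>V_*$ by the choice of $r$, so $\delta_0:=g(V_*)-g(\pi r^2)>0$, and $\E(u)=g(\pi r^2)$ (equality in isoperimetry). Suppose $u_\eps(t_0,\cdot)\not\to 1-\chi_{B_r(0)}$ along some subsequence; passing to a further subsequence, $u_\eps(t_0,\cdot)\to 1-\chi_E$ with $E\subseteq B_r(0)$ (previous paragraph) and $E\neq B_r(0)$. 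From $g(|E|)\le\E(1-\chi_E)\le\liminf_\eps\E_\eps(u_\eps(t_0,\cdot))\le\E(u)=g(\pi r^2)$ and $|E|\le\pi r^2$, and since the only point of $(V^-,\pi r^2]$ (where $V^-<V_*$ is the other solution of $g=g(\pi r^2)$) with $g\le g(\pi r^2)$ is $\pi r^2$ itself (which would force $E=B_r(0)$), we must have $|E|<V_*$. But $s\mapsto\int_{\T^2_a}(1-u_\eps(s,\cdot))\dx$ is continuous, equals $\pi r^2+o(1)>V_*$ at $s=0$ and is $<V_*$ at $s=t_0$ for small $\eps$ along the subsequence, so there is $s_\eps\in(0,t_0)$ with $\int_{\T^2_a}(1-u_\eps(s_\eps,\cdot))\dx=V_*$; extracting a limit $1-\chi_F$ with $|F|=V_*$, the $\Gamma$-$\liminf$ inequality and isoperimetry give $g(V_*)\le\E(1-\chi_F)\le\liminf_\eps\E_\eps(u_\eps(s_\eps,\cdot))\le\E(u)=g(\pi r^2)$, contradicting $\delta_0>0$. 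Hence $u_\eps(t,\cdot)\to 1-\chi_{B_r(0)}$ for every $t\ge 0$, and the first paragraph concludes.

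\emph{Where the work is.} The hard part will be the second paragraph: carrying out the delicate two-dimensional interface modification of Theorem \ref{theorem 2d slow} around a circular interface with uniform-in-$\eps$ control of all gluing errors, and inserting obstacle sub-solutions also at pinning sites where $P$ takes values strictly between $0$ and $1$ (a rescaling as in the ``left half-space'' step of that proof). Conceptually, however, the real point is the asymmetry exploited above: the hole is kept from \emph{growing} by a cheap pointwise barrier, because its curvature pushes in the helpful direction, whereas it is kept from \emph{shrinking} only through the energy-barrier (mountain-pass) estimate of the third paragraph, and that estimate is exactly what forces the hypothesis $\Lambda>\Lambda_0$, equivalently $r>1/(\Lambda\alpha)$.
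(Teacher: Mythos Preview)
Your approach is correct and actually proves the \emph{stronger} statement that $E(t)=B_r(0)$ for all $t$; the paper only proves $B_r(0)\subset E(t)\subset[-r,r]^2$, which already suffices to rule out both mean curvature flow and the gradient flow of $\E$. The difference lies almost entirely in how one handles the ``hole does not grow'' direction.

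\textbf{Where the paper is simpler.} Instead of wrapping the interface construction of Theorem~\ref{theorem 2d slow} around the circle $\partial B_r$, the paper reuses the stationary \emph{straight} strip sub-solutions already available from (the proof of) Theorem~\ref{theorem 2d torus}, case~(i). Since $B_r\subset\{|x_1|\le r\}$, a stationary sub-solution approximating $1-\chi_{\{|x_1|\le r\}}$ lies below $u_\eps^0$ and forces $E(t)\subset\{|x_1|\le r\}$; the same with $x_2$ gives $E(t)\subset[-r,r]^2$. This sidesteps completely the ``hard part'' you identify --- redoing the flattening, gluing, and obstacle insertion along a curved interface --- at the price of the weaker containment. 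For the Corollary that trade is free: as soon as $r>1/(\Lambda\alpha)$, the gradient flow of $\E$ would push $E(t)$ outside $[-r,r]^2$, so the weaker bound already gives the contradiction.

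\textbf{What your route buys.} Your curved barrier gives the optimal statement $E(t)\equiv B_r(0)$, which is of independent interest and matches the physical picture (stationarity after unloading) more precisely. Your claim that the curvature of $\partial B_r$ provides a favourable sign in $\A P-\tfrac1\eps W'(P)$ is correct (it is the nonlocal manifestation of the fact that unconstrained MCF shrinks the hole); the monotonicity argument in Lemma~\ref{lemma sub-solution 2d} gives exactly this inequality for radially increasing profiles. The magnitude you write, $\tfrac{c}{\eps r}\phi'$, is the local scaling --- in the nonlocal case the leading curvature excess is of order $|\log\eps|\,\kappa\,\phi'$ --- but only the sign matters here, and your outline of the gluing (including the ``left half-space'' step for pinning sites inside the hole) is sound.

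\textbf{The energy barrier.} Your non-shrinking argument (isoperimetric lower bound plus an intermediate-value step to find a mountain-pass time $s_\eps$) is more detailed than the paper's one-line ``for energetic reasons'', and makes transparent why the hypothesis $r>1/(\Lambda\alpha)$ is needed: it places $\pi r^2$ beyond the maximum of $g$. One small point to record: the isoperimetric inequality on $\T^2_a$ in the Euclidean form $\Per(F)\ge 2\sqrt{\pi|F|}$ is only valid for sets of sufficiently small measure; this is guaranteed in your argument because the sub-solution from the previous step forces $F\subset B_r$ (or, in the paper's version, $F\subset[-r,r]^2$), and $r<a/4$.
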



\begin{proof}
At small circles, the line energy dominates the bulk term in both the energy $\E$ and its gradient flow, while circles of radius $r> \frac{1}{\Lambda\alpha}$ are expanding. Assume that $\Lambda_0$ is so large that $\partial B_{2/(\Lambda\alpha)}(0)$ is a round circle on the torus. Then choose $r \in (\,(\Lambda\alpha)^{-1}, 2(\Lambda\alpha)^{-1})$. Both the gradient flow of $\E$ and mean curvature flow of $\partial B_r(0)$ exist smoothly up to some small positive time. 

For energetic reasons, the initial set cannot shrink, so $B_r(0)\subset E(t)$ and $E$ does not evolve by mean curvature flow. Using straight interfaces as barriers, we use Theorem \ref{theorem 2d torus} to show that $E$ cannot leave $[-r,r]\times [0,1]$ nor $[0,1]\times [-r,r]$. Thus $E$ is trapped in $[-r,r]^2$ and does not evolve by the gradient flow of $\E$ which is given by circles of increasing radius from $E(0)$.
\end{proof}

On the whole space, we could use the previous results to show that the circle is in fact non-expanding since the angles $\phi$ with $\tan\phi\in \mathbb Q$ are dense in $(-\pi/2, \pi/2)$.

\section{Related Models}\label{section stuff}

Let us briefly discuss the validity of our results for similar models concerning the same phenomenon. The first two extensions we discuss concern the dissipation mechanism, while the third one discusses a modification of the energy functional.

\subsection{Non-viscous Evolution}

It can be argued that the use of a quadratic dissipation is unphysical for the dynamics of dislocations and a rate independent evolution
\begin{equation}\label{eq non-viscous}
-\delta \E_\eps(u_\eps) \in c_\eps \,\sign(\dot u_\eps), \qquad c_\eps>0
\end{equation}
associated to a linear dissipation would be physically more sensible. Here
\[
\sign(z) = \begin{cases}\{1\} &z>0\\
[-1,1] &z=0\\ 
\{-1\} &z<0\end{cases}
\]
is the usual set-valued sign function. We believe that the emergence of an asymmetric, stick-slip type motion law from a viscous dissipation is the more interesting observation, in particular as rate-independent dynamics of this problem appear to be stationary in many cases. Namely, the sub-solutions constructed above satisfy
\[
-c_\eps \leq \frac1{|\log\eps|}\left(\A u_\eps - \frac1\eps\,W'(u_\eps)\right) \leq c_\eps
\]
for $c_\eps \geq C\,\frac{\eps}{|\log\eps|\,d_\eps^2}$ for suitable $C>0$ in the case of Theorem \ref{theorem slowness line} and $c_\eps \geq C/|\log\eps|$ in the case of Theorems \ref{theorem step line} and \ref{theorem step circle}. The same is true for many similar initial conditions even without the sign condition. Thus for such a choice of $c_\eps$, the rate-independent evolution can be taken as stationary. 
The same holds for an evolution law associated to a mixed dissipation
\[
-\delta \E_\eps(u_\eps) \in c_\eps \,\sign(\dot u_\eps) + d_\eps\,\dot u_\eps
\]
if $c_\eps$ is chosen in the corresponding parameter regime as above, since the last term vanishes identically for stationary solutions of the differential inclusion \eqref{eq non-viscous}.

\subsection{Finite Relaxation Speed}

We considered the $L^2$-gradient flow of the energy
\[
\E(u) = \frac1{|\log\eps|} \left(\frac12\,[u]_{1/2}^2 + \frac1\eps \int_{\T^2}W(u)\dx\right)
\]
which arose from as an equilibrium localisation on a plane of the crystal grid of the energy
\[
\F_\eps(u) = \frac1{|\log\eps|} \left(\frac12\int_{\T^2\times \R_+}|\nabla u|^2\dx  + \frac1\eps \int_{\T^2}W(u)\dx\right).
\]
The modelling assumption behind this mechanism is that for given dislocations in a plane, the rest of the crystal has relaxed to the minimal Dirichlet energy, which is not quite true in the dynamic case. When we consider the first variation
\begin{align*}
\delta F_\eps(u; \phi) &= \frac1{|\log\eps|} \left(\int_{\T^2\times \R_+}\langle \nabla u, \nabla \phi\rangle \dx  + \frac1\eps \int_{\T^2}W'(u)\phi\dx\right)\\
	&= \frac1{|\log\eps|} \left(\int_{\T^2\times \R_+}\langle \nabla \cdot(\phi\,\nabla u) - (\Delta u)\phi \dx  + \frac1\eps \int_{\T^2}W'(u)\phi\dx\right)\\
	&= \frac1{|\log\eps|} \left(\int_{\T^2\times \R_+} - (\Delta u)\phi \dx  +  \int_{\T^2}\partial_\nu u + W'(u)\phi\dx\right)
\end{align*}
and the inner product
\[
\langle v,\phi\rangle := \frac1{m_\eps}\int_{\R_+\times \T^2}v\phi \dx + \frac1\eps \int_{\T^2}v\phi,
\]
we obtain an evolution equation
\begin{equation}\label{eq finite speed}
\begin{pde}
m_\eps\,u_t &= \frac1{|\log\eps|} \Delta u&\text{in }\T^2\times \R_+\\
\eps\,u_t &= \frac1{|\log\eps|}\left(\A u - \frac1\eps\,W'(u_\eps)\right) &\text{on }\T^2\setminus\bigcup_{k=1}^{N_\eps}\overline{B_\eps(x_{i,\eps})}\\
u &= 0 &\text{on }\T^2\setminus\bigcup_{k=1}^{N_\eps} \overline{B_\eps(x_{i,\eps})}.
\end{pde}
\end{equation}
The case we considered above corresponds to an infinitely fast relaxation speed in the half-space, i.e.\ the formal limit $m_\eps \equiv 0$. In that case we could forget about the analytic continuation and only had to track the evolution of the boundary values. We can connect this to the case of positive $m_\eps>0$ as follows:

All the sub-solutions, super-solutions and solutions constructed above for the gradient flow equation on $\Omega = \R^2$ or $\Omega = \T^2$ were pointwise non-increasing, so their harmonic extensions to $\Omega\times \R_+$  have this property as well. Thus the analytic continuation $\hat u_\eps$ satisfies
\[
m_\eps \partial_t \hat u_\eps \leq 0 = \frac1{|\log\eps|} \Delta \hat u_\eps \quad\text{ in }\Omega\times\R_+
\]
either in the viscosity or the distributional sense, which means that the analytic solution $\hat u_\eps$ is a sub-solution for \eqref{eq finite speed}. In this sense, we can at least say that an evolution with a finite relaxation speed can in no case be faster than the limiting case we considered.

\subsection{Finite-Strength Pinning}

The hard constraint $u\equiv 0$ on $\bigcup_{k=1}^{N_\eps} \overline{B_\eps(x_{i,\eps})}$ can be see as the limiting case of the following soft obstacle problems. We consider a version of $\E_\eps$ on the whole space with an additional term in the energy
\[
\F_\eps(u) = \frac1{|\log\eps}\left([u]_{1/2}^2 + \int_{\T^2}W(u)\dx\right) + \sum_{i=1}^{N_\eps}\frac1{\eps^2}\int_{B_\eps(x_{i,\eps})} g\left(\frac{x-x_{i,\eps}}\eps\right)\,|u|^{q}\dx.
\]
Here $1\leq q < \infty$ is a parameter we could choose freely and $g\in C_c(B_1(0))$ is a non-negative function. The hard obstacle arises as the formal limit $g\to +\infty\cdot\chi_{\bigcup_{k=1}^{N_\eps} \overline{B_\eps(x_{i,\eps})}}$ for any $q$. Physically, the case $q=1$ seems the most relevant. This extension has been discussed in \cite{MR2178227, MR2231783}, and the same $\Gamma$-limit statement still holds with a different capacity function $\alpha: \Z\to [0,\infty)$.

Our results apply also here by the following considerations. Observe that
\[
\ul u_t = 0, \quad W'(\ul u) = 0, \quad \A\ul u \geq 0 
\]
at $x\in \bigcup_{k=1}^{N_\eps} \overline{B_\eps(x_{i,\eps})}$, so $\ul u$ is a sub-solution also to $u_t = \A u - \frac 1\eps W'(u) - q\,g(x)\,|u|^{q-2}u$ which is the gradient flow equation of $\F_\eps$. Thus we can use the same sub-solutions to obtain upper bounds on the velocity of interfaces, even in the case $q=1$ since the sub-solutions $\ul u_\eps$ do not change sign. For super-solutions, we have to solve a minimisation problem with the soft pinning instead of the hard one instead:
\[
\text{Minimise } \frac12\,[u]_{1/2}^2 + \int_{S^1_l}W(u)\dx + \int_{B_1(x_0)}g\,|u|^q\dx \qquad\text{subject to }\frac1l\int_{S^1_l}u\dx >\frac12.
\]
When we establish that the solution to this problem satisfies $u\not\equiv 1$, $0\leq u\leq 1$, we obtain matching bounds on the scaling of the velocity of interfaces at a single step on the real line and up to a logarithmic factor in the plane. We observe that also here, the contracting effect of the obstacles vanishes compared to the kink/anti-kink attraction.

\section{Conclusion}\label{section summary}

We have identified the time-scale on which a pinning constraint would naturally act by considering a whole space problem (up to a factor of $O(|\log\eps|)$). We have shown that the gradient flows of the pinned problems do not converge to the gradient flow of the limiting problem under certain assumptions on the distribution of obstacles and given estimates on the behaviour for certain initial conditions. A number of questions remain open.

\begin{enumerate}
\item Is there an explicit law that describes the limit of the evolutions of the $\eps$-problem at curved initial conditions?

\item How dependent is the limiting motion on the exact (well-prepared) initial condition?

\item Do the same results hold for more general distributions of obstacles, or can other phenomena occur for less regularly distributed (or moving) obstacles? 
\end{enumerate}

Furthermore, our methods used the rotational symmetry of the fractional Laplacian and the fact that all functions we constructed were non-negative. We expect that these constraints could be eliminated. We also believe that a more explicit characterisation of admissible potentials $W$ should be available.

\appendix

\section{Fractional Evolution Equations}

The gradient flow of the energy $\E_\eps$ is given by the fractional parabolic equation
\begin{equation}\label{eq gradient flow}
\begin{pde}
c_\eps\eps\,u_t &= \frac1{|\log\eps|}\left(\A u - W'(u)\right) &t>0, x\in \T^2\setminus \bigcup_{i=1}^{N_\eps}\overline{B_\eps(x_{i,\eps})}\\
		u &\equiv 0& t\geq 0, x\in \bigcup_{i=1}^{N_\eps}\overline{B_\eps(x_{i,\eps})}\\
		u &= u^0 & t=0
\end{pde}
\end{equation}
with $c_\eps\equiv 1$ which formally has the structure
\begin{equation}\label{eq fractional pde}
\begin{pde}
u_t &= \A u + f(u) &t>0,\: x\in \Omega\\
		u &\equiv 0& t\geq 0,\: x\in \T^2\setminus \Omega\\
		u &= u^0 & t=0,\: x\in \Omega
\end{pde}
\end{equation}
where $\A = - (-\Delta)^{1/2}$ is the fractional Laplacian of order $s=1/2$ and $f$ is a bounded Lipschitz function. Choosing constants $c_\eps \ll 1$ corresponds to accelerating time to rescale slow motion of the gradient flows to the macroscopic time-scale. Since we derived the equation as the gradient flow of the energy $\E_\eps$, the most natural concept of a solution is that of a weak solution in a Bochner space
\[
W_\eps = \left\{u \in L^2\left([0,T], X_\eps\right)\bigg| \frac{\d u}{\dt} \in L^2\left([0,T], X_\eps^*\right)\right\}
\]
where again
\[
X_\eps:= \{u_\eps \in H^{1/2}(\T^2)\:\big|\:u_\eps\equiv 0\text{ on }B_\eps(x_{i,\eps})\text{ for } 1\leq i\leq N_\eps\}.
\]
 It is well-known that the operator $\A: H^{1/2}(\T^2)\to H^{-1/2}(\T^2)$ (and also $\A:X_\eps\to X_\eps^*$) is monotone. Furthermore, if $u$ solves \eqref{eq fractional pde}, we note that $v(t,\cdot) = e^{-\lambda t}$ solves the same equation with $f$ replaced by
 \[
 f_\lambda(v) = e^{-\lambda t}\,f\left(e^{\lambda t} v\right) + \lambda v
 \]
which can be made monotone increasing for large enough $\lambda$ as $f$ is Lipschitz. The existence of a weak solution $v$ follows by the theory of monotone operators. Reversing the modification, we obtain a weak solution $u = e^{\lambda t} v$ in the Bochner space $W_\eps$.

If $u$ is smooth, $0\leq u_0\leq 1$, the comparison principle implies that $u\leq 1$ for all times and the non-linearity $f = - W'$. We  show that remains true for weak solutions in the appendix (after a suitable modification of $f$ away from $[0,1]$, which can be neglected after showing that $0\leq u\leq 1$).

Since $f$ is bounded, we have $f(u)\in L^\infty(\Omega)$ and \cite{MR3626038} implies that $u$ is H\"older continuous up to the boundary (note that the concept of a weak solution used in that article is weaker than ours). Hence $f(u)$ is also H\"older continuous, which means that $u$ is locally $C^{1,\alpha}$-smooth in $\Omega$ and $C^{0,1/2}$-continuous on $\T^2$. While the results of \cite{MR3626038} are given in domains in $\R^n$, the proofs also apply in the periodic case. 

In particular, the weak solutions are classical and thus justify the usual calculations that imply a decrease of energy along the time evolution. The solutions are in particular viscosity solutions and we may construct viscosity sub- and super-solutions to understand their behaviour.

Interestingly, the regularity results apply more easily if $f$ is bounded a priori. However, if $W(z) = (z^2-1)^2$ is the usual double-well potential and the initial condition lies between $-1$ and $1$, we may modify $W$ outside $[-1,1]$ to become bounded Lipschitz. In a second step, we may apply the maximum principle to deduce that solutions remain between $\pm 1$, which means that the solution to the modified problem is actually also a solution to the original problem -- compare the proof of Lemma \ref{lemma obstacle sub-solution}.

We will also investigated solutions of the evolution equation on the whole real line or in the plane. By the same arguments as above, weak solutions exist if the initial condition happens to lie in the space
\[
\left\{u\in L^2(\R^n)\:\bigg|\:u \equiv 0 \qquad\text{on }\bigcup_{i=1}^\infty B_\eps(x_{i,\eps})\right\}.
\]
Here we use that we could modify $f$ to become monotone and use a monotone Nemickij operator rather than having to pass to the theory of pseudomonotone operators, where the lack of compactness in the embedding for the whole space problem causes additional challenges. 

When we consider solutions to the evolution equation \eqref{eq gradient flow} on the real line with initial conditions approximating a single step function, on the other hand, we need to understand solutions in the viscosity sense. On the whole space, the theory of viscosity solutions for fractional evolution equations is developed \cite{MR2121115,MR2259335,MR2422079} and the pinning constraint could be included in the proof of the maximum principle by the doubling of variables in the standard way -- see e.g. \cite[Theorem 2]{MR2121115}. Existence can then be proved using Perron's method.

Consider the Bochner space $W$ over $\Omega\subset\T^2$ and the particular case of a non-linearity $f$ which satisfies $f(1) = 0$, $f<0$ on $(1,\infty)$ and $f$ is constant close to $\infty$. Assume further that we have an initial condition $u_0\leq 1$. Now consider $u_+:= \max\{u,1\}$. Since $u\in C^0([0,T], L^2(\T^2))$ by embedding theorems $u_+(t,\cdot)$ is well-defined in $L^2(\T^2)$ and we can calculate the integral
\[
\left(\int_{\T^2}(u_+)^2\dx\right)(t)
\]
pointwise in time. Due to Bochner-space theory, smooth functions are dense in $W$ and we can consider a sequence of functions $u_n \in C^0([0,T)\times \T^2) \cap C^2((0,T)\times \Omega)$ such that $u_n\equiv 0$ on $\T^2\setminus \Omega$ such that $u_n\to u$ in $W$. In particular this convergence implies
\[
\left(\int_{\T^2}(u_n)_+^2\dx\right)(0)\to 0.
\]
Take $(t,x)$ such that $u_n(t,x)>1$. By continuity, $u_n>1$ in a neighbourhood of the point, and the Laplacian can be calculated pointwise as a singular integral
\[
\left[\A u_{n,+}\right](t,x) = \int_{\Omega} \frac{u_{n,+}(t,y) - u_{n,+}(t,x)}{|x-y|^{3}}\dy \geq \int_{\Omega} \frac{u_n(t,y) - u_n(t,x)}{|x-y|^{3}}\dy = \left[\A u_n\right](t,x)
\]
since $u_{n,+}(t,y)\geq u(t,y)$ and $u_{n,+}(t,x) = u_{n,+}(t,x)$. On the other hand, if $u_{n,+}(t,x)= 1$, then $u_{n,+}$ is minimal at $(t,x)$ and thus $\A u_{n,+}\geq 0$ at $(t,x)$ in the distributional sense. It follows that 
\[
\left(\partial_t - \A\right) u_{n,+} \geq \chi_{\{u_n>1\}}\cdot \left(\partial_t - \A\right) u_n.
\]
Since $u_+$ is smooth enough to be a Sobolev function, it also lies in $W$ and we compute
\begin{align*}
\left(\int_{\R^n}(u_n)_+^2\dx\right)(t) &= \left(\int_{\R^n}(u_n)_+^2\dx\right)(0) + 2\int_0^t\left\langle\left[\partial_t - \A + \A\right](u_n)_+, (u_n)_+\right\rangle_{X^*, X} \d s\\
	&\leq \left(\int_{\R^n}(u_n)_+^2\dx\right)(0) + 2\int_0^t\left\langle\left[\partial_t - \A\right](u_n)_+, (u_n)_+\right\rangle_{X^*, X} \d s\\
	&\leq \left(\int_{\R^n}(u_n)_+^2\dx\right)(0) + 2\int_0^t\left\langle f(u_{n,+}), (u_n)_+\right\rangle_{L^2, L^2} + \langle \eta_n, u_{n,+}\rangle_{X^*,X} \d s\\
	&\leq \left(\int_{\R^n}(u_n)_+^2\dx\right)(0) + 2\int_0^t \langle \eta_n, u_{n,+}\rangle_{X^*,X} \d s\\
	&\to 0
\end{align*}
since $u_n\to u$ strongly in $W$, thus in particular $f(u_n)\to f(u)$ strongly as well. This implies that $u\leq 1$ for all times and thus the solutions are classical for positive times. The same argument shows $u\geq 0$ and a slight modification implies comparison with a stationary sub- or super-solution.

\end{document}